\documentclass[10pt, english, a4paper]{article}
\usepackage[utf8]{inputenc}
\PassOptionsToPackage{hyphens}{url}
\usepackage[hidelinks]{hyperref} %colorlinks=true, linkcolor=blue!90!black
\usepackage{graphicx}
\usepackage{varioref, babel, fancyvrb, listings, amsmath,amsthm, amssymb, mathtools}
\usepackage{array}
\usepackage{enumitem}
\usepackage[a4paper, total={6in, 8in}]{geometry}
\usepackage{mathabx}
\usepackage{listings}
\usepackage{todonotes}
\usepackage{subcaption}
\usepackage{csquotes}
\usepackage{tikz}
\usepackage{booktabs}
\usepackage[font=footnotesize]{caption}

\usetikzlibrary{patterns}
\usetikzlibrary{arrows}
\usetikzlibrary{patterns}
\usetikzlibrary{shapes.misc}
\usetikzlibrary{arrows.meta} % requiers tikz v3.0 or newer
\tikzset{cross/.style={cross out, draw, 
         minimum size=2*(#1-\pgflinewidth), 
         inner sep=0pt, outer sep=0pt},
         cross/.default={3pt}}

\usepackage[normalem]{ulem}

\usepackage{algorithm}
\usepackage[noend]{algpseudocode}

\def\textsizefig{footnotesize}

\AtBeginEnvironment{figure}{\setlength{\tabcolsep}{2pt}}
\AtBeginEnvironment{figure*}{\setlength{\tabcolsep}{2pt}}
\AtBeginEnvironment{table}{\setlength{\tabcolsep}{2pt}}

\DeclareMathOperator{\Supp}{supp} % Do not use \supp with a lower case letter
\DeclareMathOperator{\Span}{span}

% Often used number systems. 
\def\C{\mathbb{C}}
\def\Z{\mathbb{Z}}
\def\N{\mathbb{N}}

\def\R{\mathbb{R}}
\def\d{~\textnormal{d}}
\newcommand{\Hs}{\mathcal{H}}
\newcommand{\Rs}{\mathcal{R}}
\newcommand{\Ss}{\mathcal{S}}

% Braces
\def\({\left(}
\def\){\right)}

\newcommand{\ind}[1]{\left\langle #1 \right\rangle}
\newcommand{\indi}[1]{\langle #1 \rangle} % inline
\newcommand{\floor}[1]{\left\lfloor #1 \right\rfloor}
\newcommand{\ceil}[1]{\left\lceil #1 \right\rceil}

\theoremstyle{plain}
\newtheorem{theorem}{Theorem}[section]
\newtheorem{lemma}[theorem]{Lemma}
\newtheorem{proposition}[theorem]{Proposition}

\theoremstyle{definition}
\newtheorem{definition}[theorem]{Definition}

\theoremstyle{remark}
\newtheorem{remark}[theorem]{Remark}

\title{\textbf{Recovering wavelet coefficients from binary samples using fast transforms}}

\author{Vegard Antun\thanks{Department of Mathematics, University of Oslo, Norway. (\texttt{vegarant@math.uio.no})}}

\begin{document}

\maketitle

\begin{abstract}
Recovering a signal (function) from finitely many binary or Fourier samples is one of the core problems in modern medical imaging, and by now there exist a plethora of methods for recovering a signal from such samples. Examples of methods, which can utilise wavelet reconstruction, include generalised sampling, infinite-dimensional compressive sensing, the parameterised-background data-weak (PBDW) method etc. However, for any of these methods to be applied in practice, accurate and fast modelling of an $N \times M$ section of the infinite-dimensional change-of-basis matrix between the sampling basis (Fourier or Walsh-Hadamard samples) and the wavelet reconstruction basis is paramount. In this work, we derive an algorithm, which bypasses the $NM$ storage requirement and the $\mathcal{O}(NM)$ computational cost of matrix-vector multiplication with this matrix when using Walsh-Hadamard samples and wavelet reconstruction. The proposed algorithm computes the matrix-vector multiplication in $\mathcal{O}(N\log N)$ operations and has a storage requirement of $\mathcal{O}(2^q)$, where $N=2^{dq} M$, (usually $q \in \{1,2\}$) and $d=1,2$ is the dimension. As matrix-vector multiplications is the computational bottleneck for iterative algorithms used by the mentioned reconstruction methods, the proposed algorithm speeds up the reconstruction of wavelet coefficients from Walsh-Hadamard samples considerably. 
\end{abstract}

\paragraph{Keywords: }Fast transforms,  Sampling theory, Wavelets, Walsh functions, Walsh-Hadamard samples.

\paragraph{Mathematics Subject Classification (2010): } 	94A20, 94A11, 42C10, 42C40, 46C05.

\section{Introduction}

Approximating a function from finitely many samples is one of the fundamental problems in approximation theory, and, by now, there exist myriads of conditions and algorithms for obtaining good function approximation. The problem is often motivated by the many applications in natural sciences where one is given a finite set of samples of an underlying unknown signal (function) that one wants to recover (approximate).

In this work, we consider the recovery of signals, where physical constraints dictate the type of samples one can acquire. This is a well-studied problem with numerous applications in medical imaging. Examples include Magnetic Resonance Imaging (MRI) \cite{mri_principles, cs_MRI_lustig}, surface scattering \cite{jardine2009helium, jones2016continuous}, X-ray Computed Tomography (CT) \cite{epstein2007introduction} and electron microscopy \cite{leary2013compressed}, all of which employ Fourier sampling. Other examples, employing binary samples, include fluorescence microscopy \cite{studer2012compressive, muller2006introduction}, lensless imaging \cite{lensless_im} and compressive holography \cite{Clemente:13}. 

Given the long list of applications, there are many efficient methods for reconstructing a function from a fixed sampling modality. Examples of such methods include \emph{generalised sampling} \cite{adcock2012generalized, adcock2015linear, Beyond13, adcock2014stability,  hrycak10, ma2017generalized}, studied by Adcock, Hansen,
Hrycak, Gröchenig, Kutyniok, Ma, Poon, Shadrin and others, its predecessor; \emph{consistent sampling} \cite{eldar2003sampling, eldar2004sampling, eldar2005general, hirabayashi2007consistent, unser1994general, unser1998generalized}, developed by Aldroubi, Eldar, Unser and others. More recently Adcock, Antun, Hansen, Kutyniok, Lim, Poon, Thesing and many others have developed reconstruction methods based on \emph{infinite-dimensional compressive sensing} \cite{AntunUniform, adcock2016generalized, adcock2017breaking, kutyniok2018optimal, poon2014consistent, thesing2021non}. Other approaches can be found within data assimilation. A first approach here was introduced by Maday \& Mula in \cite{maday2013generalized}, called \emph{generalised empirical interpolation method}, this was later followed by the \emph{Parametrized Background Data-Weak} (PBDW) method, developed by Maday, Patera, Penn \& Yano in \cite{maday2015pbdw, maday15}, and later analysed by Binev, Cohen, Dahmen, DeVore, Petrova, and Wojtaszczyk in \cite{Binev17, devore2017data}.

We model the problem as follows. Let $\Hs$ be an infinite-dimensional separable Hilbert space with inner product $\ind{\cdot,\cdot}$ and norm $\|\cdot\|$. Let $\{s_{k} : k \in \mathbb{N}\}$ and $\{r_{k} : k \in \mathbb{N}\}$ be two orthonormal bases for $\Hs$, called  the sampling and reconstruction basis, respectively.  Furthermore define the sampling space, as the linear span $\Ss_{N} = \Span\{s_{1},\ldots, s_{N}\}$ and the reconstruction space as $\Rs_{M}= \Span\{r_{1}, \ldots, r_{M}\}$.

Suppose that we can only observe the function $f \in \Hs$, using \emph{finitely many} linear measurements $\ind{f,s_k}$, $k=1,\ldots,N$. Since $\{s_k : k \in \mathbb{N}\}$ is an orthonormal basis, this immediately gives the truncated series approximation
\begin{equation}
\label{eq:approx_f_N}
f_{N} = y_1s_1 + \cdots + y_N s_N \in \Ss_{N} 
\end{equation}
where $y_k = \ind{f,s_k}$. In all the applications mentioned above, we have limited freedom in designing the sampling basis $\{s_k : k \in \mathbb{N}\}$ and the approximation $f_N$ may, therefore, suffer from unpleasant reconstruction artefacts due to the characteristics of the sampling basis, slow convergence rates or the Gibbs phenomenon. 

\begin{figure}[tb]
    \centering
    \begin{\textsizefig}
    {\setlength{\tabcolsep}{15pt}
    \begin{tabular}{@{}>{\centering}m{0.30\textwidth}>{\centering\arraybackslash}m{0.30\textwidth}@{}}
    $f(t)$ & $g(t)$ \\
     \includegraphics[width=\linewidth]{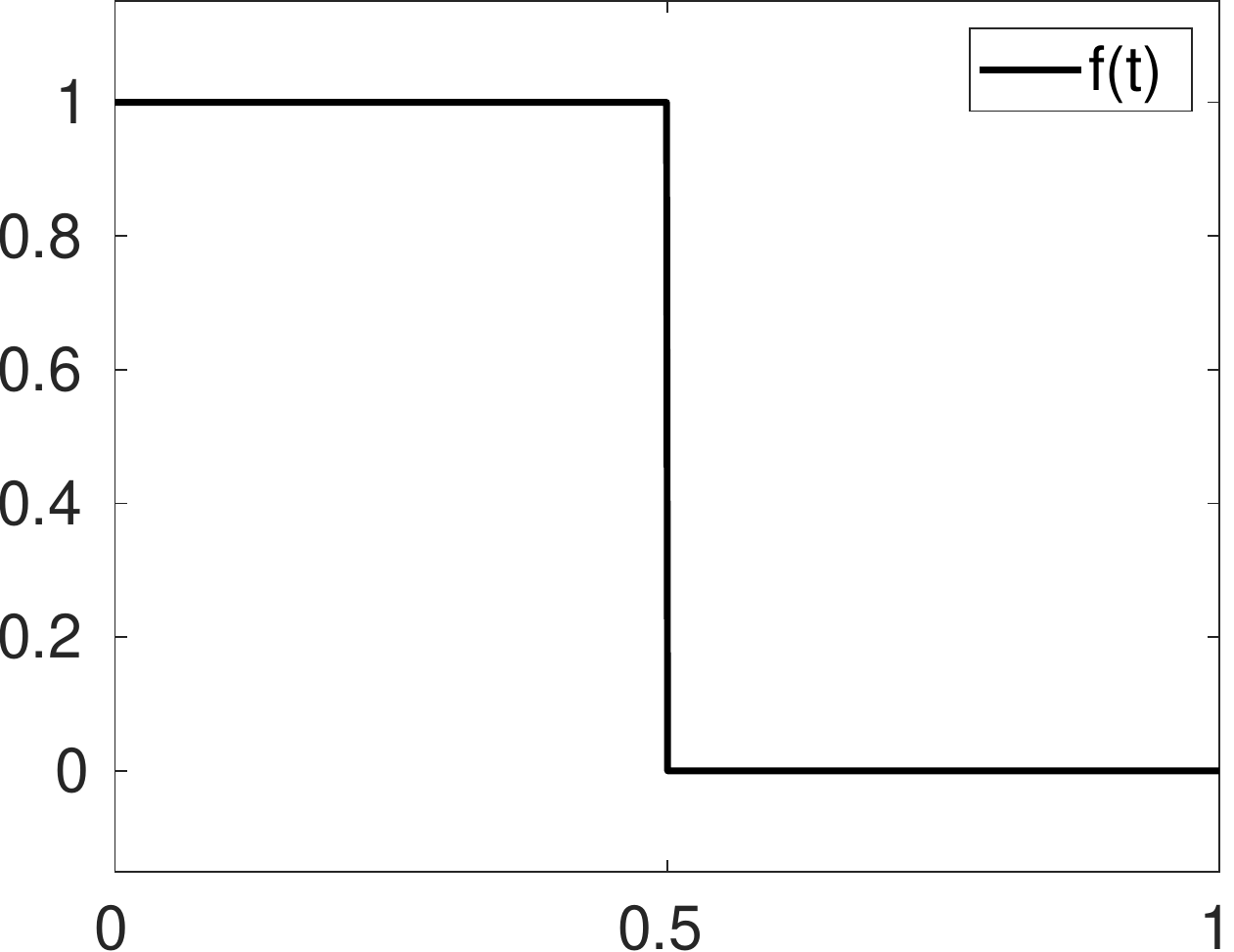} 
    &\includegraphics[width=\linewidth]{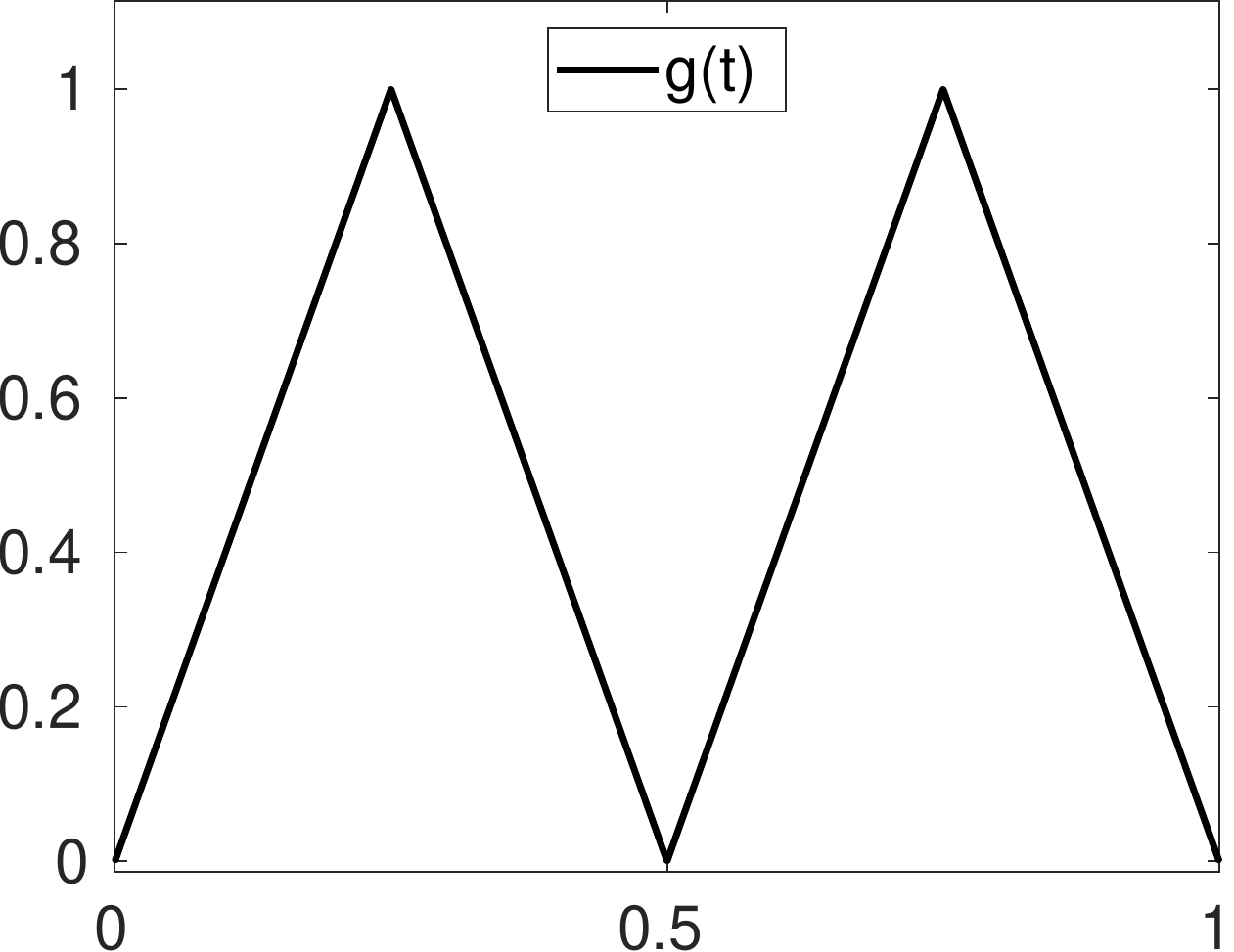} 
    \end{tabular}
    }
    \begin{tabular}{@{}>{\centering}m{0.30\textwidth}>{\centering}m{0.30\textwidth}>{\centering\arraybackslash}m{0.30\textwidth}@{}}
     $\text{Fourier: } f_{N}, ~ N = 16$ 
    &$\text{Fourier: } f_{N}, ~ N = 256$ 
    &$\text{Walsh: } g_{N}, ~ N = 64$ \\
     \includegraphics[width=\linewidth]{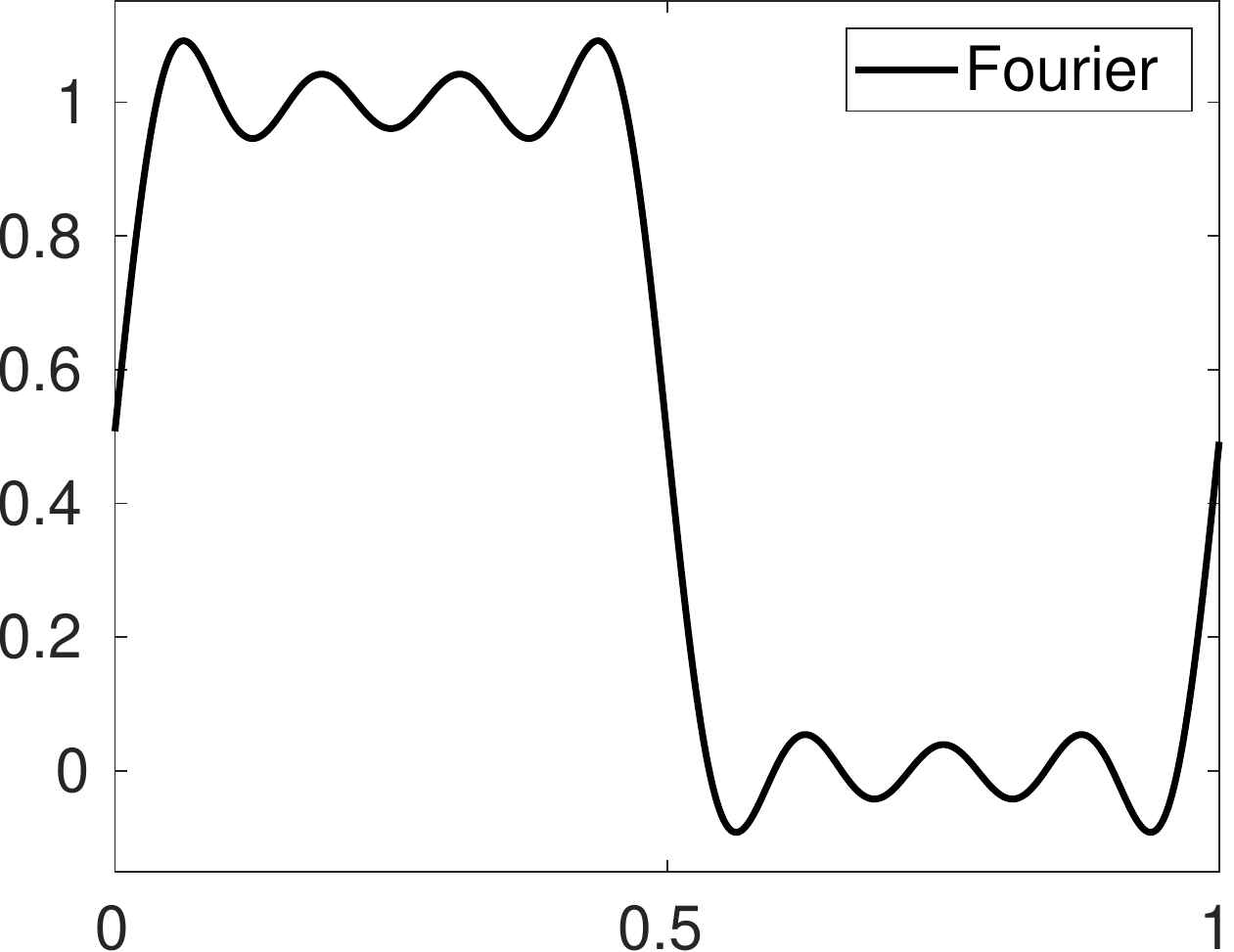} 
    &\includegraphics[width=\linewidth]{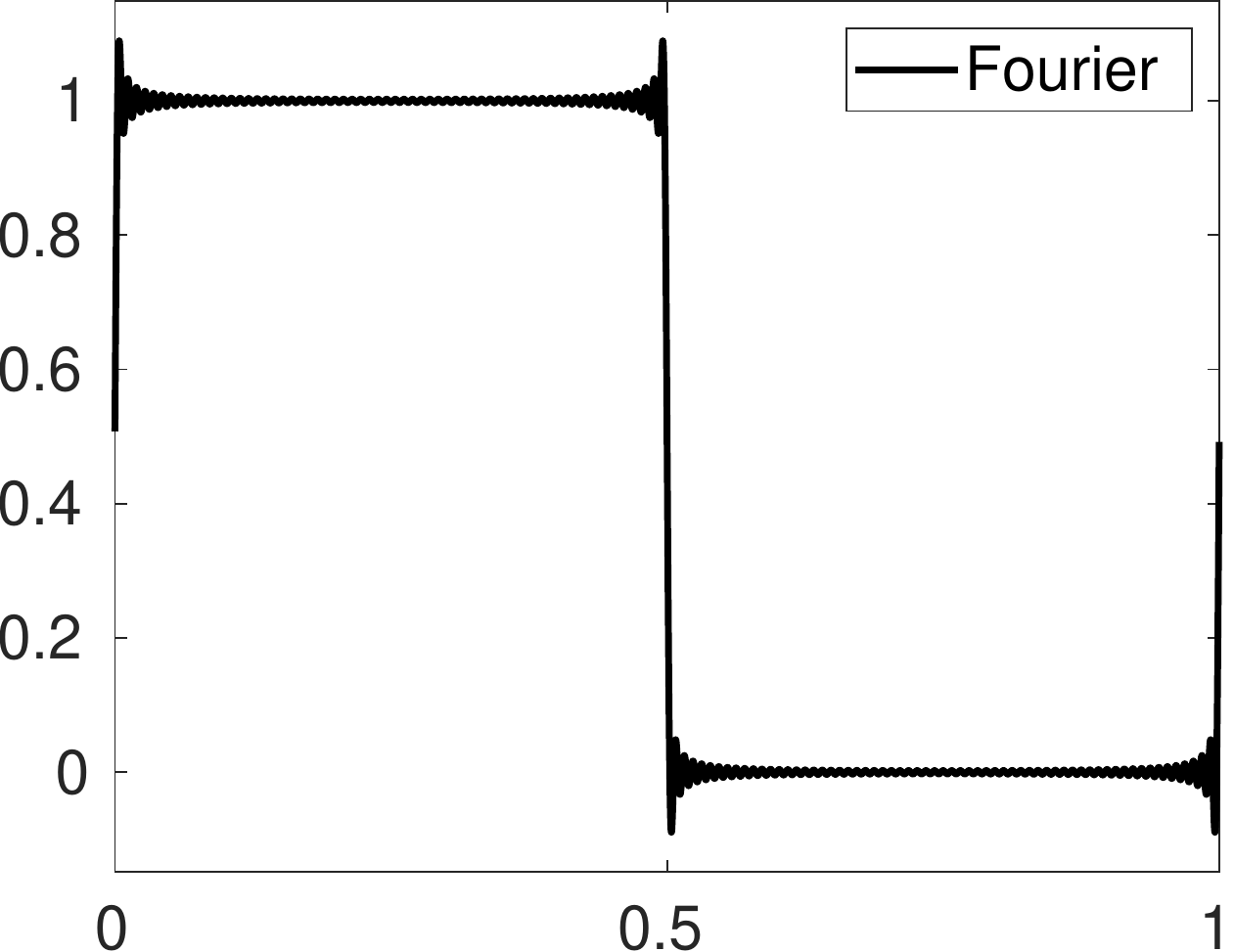} 
    &\includegraphics[width=\linewidth]{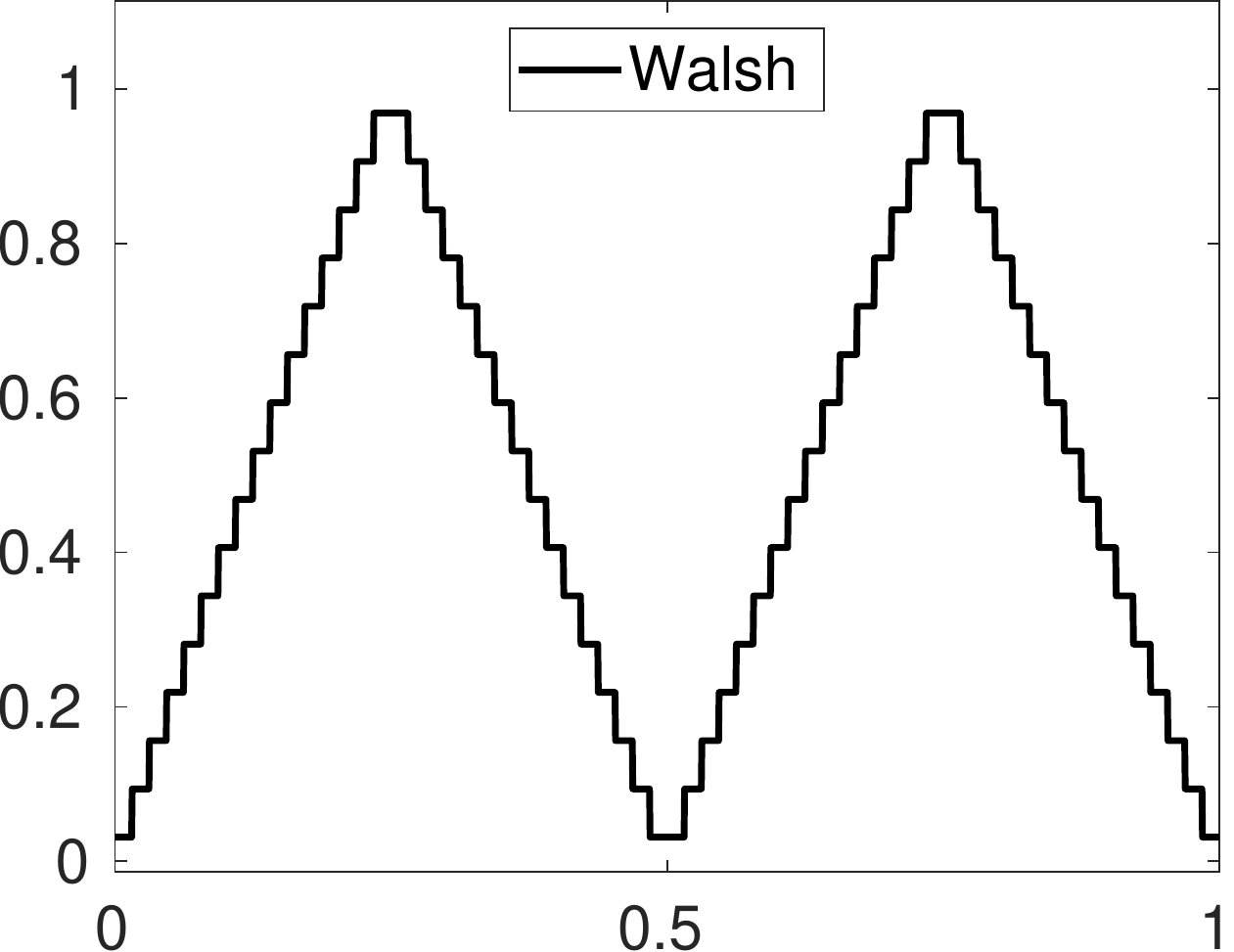} 
    \end{tabular}
    \end{\textsizefig}
    \vspace{-3mm}
    \caption{\label{fig:f_N_app}
\textbf{(Undesirable artefacts)}. The two functions $f$ and $g$  (top row) are sampled using a Fourier and Walsh sampling basis, respectively. Given the acquired samples, we use the native truncated Fourier series $f_N$ and truncated Walsh series $g_{N}$, known from \eqref{eq:approx_f_N}, to approximate the functions. On the bottom row we show reconstructions $f_N$ and $g_N$, for different values of $N$. Notice how the truncated Fourier series causes $\mathcal{O}(1)$ Gibbs oscillations around the discontinuity for every choice of $N$, and how the truncated Walsh series produce a reconstruction with blocky artefacts.
}
%\vspace{-4mm}
\end{figure}

An example of such artefacts can be seen in Figure \ref{fig:f_N_app}. Here we have chosen $\mathcal{H} = L^2([0,1])$ and consider the Fourier sampling basis $\{ (2\pi)^{-1/2} e^{2\pi \mathrm{i} n}: n \in \mathbb{Z}\}$ and the Walsh sampling basis $\{w_n : n \in \Z_+ \coloneqq \{0,1,\ldots,\} \}$, where the $w_n$'s are Walsh functions (see \S\ref{ss:wal} for more on these functions, and their relation to Hadamard matrices). In the figure, we can see how the Walsh sampling basis gives a blocky  approximation to the continuous hat functions and how the Fourier sampling basis, (no matter how large we choose $N$), always produce the very characteristic $\mathcal{O}(1)$ Gibbs oscillations around the discontinuity. This is because $f_N$ only converges to $f$ in the $\ell^2$-norm, rather than the stronger uniform norm.

\begin{figure}
    \centering
    \begin{\textsizefig}
    \setlength{\tabcolsep}{10pt}
    \begin{tabular}{@{}>{\centering}m{0.30\textwidth}>{\centering\arraybackslash}m{0.30\textwidth}@{}}
     GS rec. $\widetilde{f}$ $(M=8, N=16)$ 
    &GS rec. $\widetilde{g}$ $(M=32, N=64)$ \\
     from  Fourier samples  
    &from Walsh samples \\
     \includegraphics[width=\linewidth]{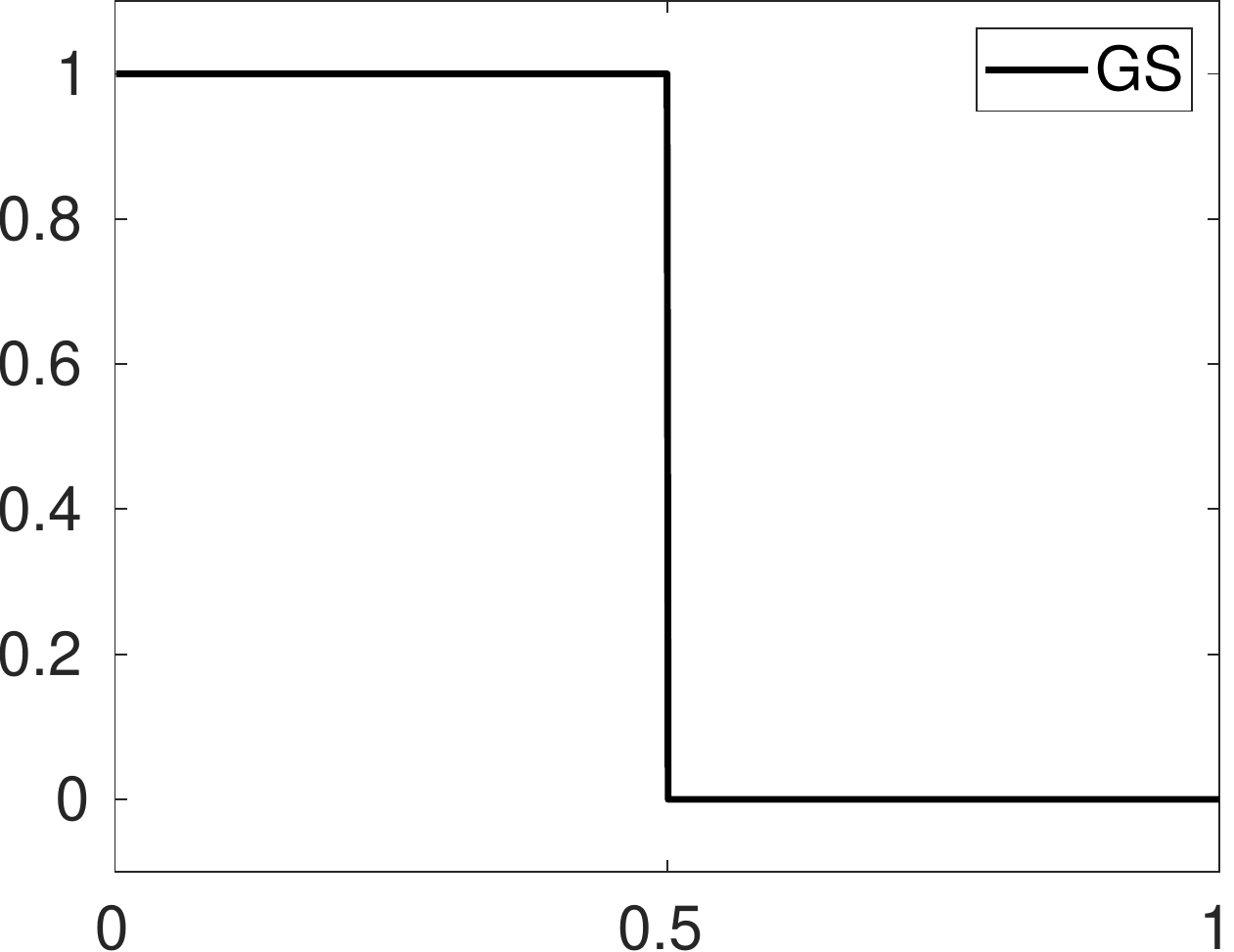} 
    &\includegraphics[width=\linewidth]{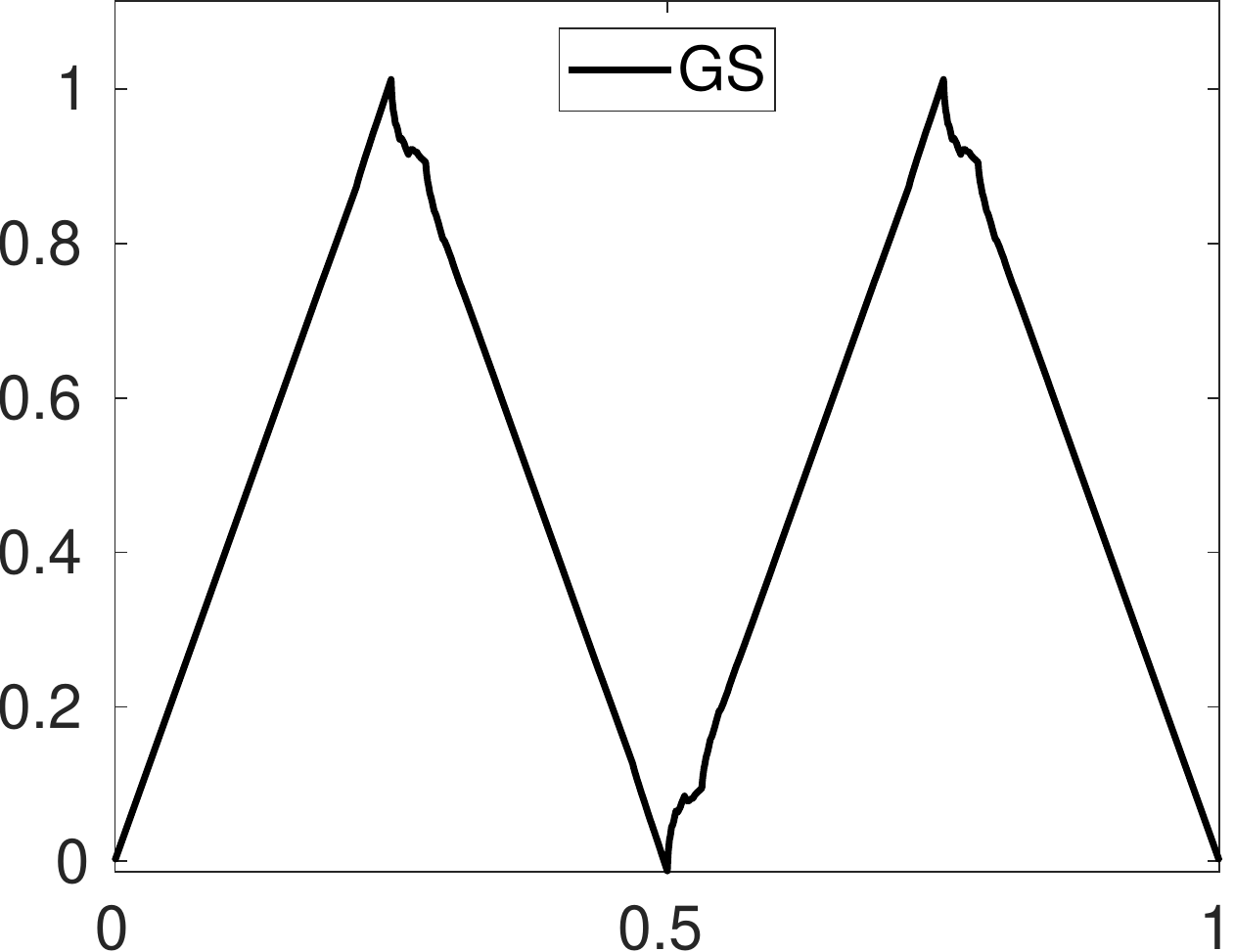} 
    \end{tabular}
    \end{\textsizefig}
    \vspace{-3mm}
    \caption{\label{fig:GS_rec}
\textbf{(Improved reconstruction in $\Rs_{M}$)}.
Given $N=16$ Fourier samples from the function $f$ in Figure \ref{fig:f_N_app} and $N=64$ Walsh samples from the function $g$ in the same figure, we compute approximations $\widetilde{f}$ and $\widetilde{g}$, respectively, using generalised sampling (GS). Here we use a Haar wavelet basis with $M=8$ functions for $\widetilde{f}$, and a Daubechies 2 (DB2) wavelet basis with $M=32$ basis functions for the function $\widetilde{g}$. Note how increasing $N$ in Figure \ref{fig:f_N_app} can not remedy the $\mathcal{O}(1)$ Gibbs oscillation for the discontinuous Haar scaling function $f$ seen in the figure, whereas choosing a basis which spans this function enables us to capture $f$, using only $N=16$ samples.   
}
%\vspace{-4mm}
\end{figure}

To resolve this issue, the idea of the aforementioned reconstruction techniques is to utilise prior knowledge on $f$, to compute a better approximation in the reconstruction space $\Rs_{M}$, using the samples $\{y_1, \ldots, y_{N}\}$. In this work $\Rs_{M}$ is spanned by orthonormal wavelets and we consider $\Hs = L^2([0,1]^d)$, for $d = 1,2$. This reconstruction space has several advantages.

\begin{enumerate}[label=(\roman*)]
\item Orthonormal wavelets can be computed with any desired degree of smoothness, ranging from the discontinuous Haar wavelet to higher-order Daubechies wavelets or symlets. This means that we can tailor-make the smoothness of the reconstruction space. 

\item In one dimension, orthonormal wavelets allows for optimal non-linear approximation of functions with bounded variations \cite[Ch.\ 10]{CSBook} (see also \cite{devore_1998}) and while wavelets are not provably optimal in two dimensions, their use and applicability in imaging is ubiquitous \cite{fastWaveletRecUnser, cs_MRI_lustig, ravishankar2019image}.

\item For Walsh sampling (considered in this work) and orthonormal wavelet reconstruction, the so-called \emph{stable sampling rate} (see Def.\ \ref{def:ssr}) is linear \cite{ThesingSSR}. That is, to recover $M$ wavelet coefficients using, e.g., generalised sampling, we require $N \geq CM$ Walsh samples, where $C \geq 1$ is a constant. We note that this rate is not necessarily linear for all reconstruction bases. For Fourier sampling and polynomial reconstruction, the requirement is quadratic in $M$, i.e., $N \geq C M^2$ samples are required \cite{hrycak10}. For Walsh sampling and polynomial reconstruction, the stable sampling rate is not known.

\end{enumerate}

\subsection{Notation}
Let $\ell^2(N)$ denote the usual set of square summable sequences, and let $\mathcal{B}(\ell^2(\mathbb{N}))$, denote the set of bounded linear operators between such sequences. 
For $\Omega \subseteq \{1, \ldots, N\}$, we let $P_{\Omega} \colon \ell^2(\mathbb{N}) \to \ell^2(\mathbb{N})$ be the projection onto the coordinates indexed by $\Omega$. That is, for $z \in \ell^2(\mathbb{N})$, $(P_{\Omega}z)_i = z_i$ if $i \in \Omega$, and 0 otherwise.  
%\[(P_{\Omega}z)_i = \begin{cases} z_i &\text{if } i \in \Omega \\ 0 &\text{otherwise}\end{cases}.\]
Let $m=| \Omega |$. We sometimes abuse notation slightly and say that $P_{\Omega}\colon \ell^{2}(\N) \to \C^{m}$, by simply ignoring all the zero entries. Furthermore, if $\Omega = \{1,\ldots, N\}$ we simply write $P_{N}$. Often we do not specify the domain and range of $P_{N}$, and let this be given by the context. Thus for an operator $U \in \mathcal{B}(\ell^2(\mathbb{N}))$, we write $P_{N}UP_{M}$ both to mean a finite dimensional $N\times M$ matrix and an operator in $\mathcal{B}(\ell^2(\mathbb{N}))$, depending on the context. When $P_{M} \colon \ell^2(\mathbb{N}) \to \C^M$, we have that $P_{M}^* \colon \C^M \to \ell^2(\mathbb{N})$, however, to unify the notation we still write $P_{N}UP_{M}$, rather than $P_{N}UP_{M}^*$. 

Finally, for some closed subspace $\mathcal{V} \subset \Hs$ we let $P_{\mathcal{V}}\colon \Hs \to \Hs$ denote the projection onto $\mathcal{V}$.  

\subsection{Computing approximations in $\mathcal{R}_{M}$}
For $f \in \mathcal{H}$, let $x_{k} = \ind{f,r_k}$ and $y_{k} = \ind{f,s_k}$ be the coefficients of $f$ in the reconstruction basis  and sampling basis, respectively. Let $x = \{x_k\}_{k\in \N}$ and $y = \{y_k\}_{k\in \N}$ and notice that $x,y \in \ell^2(\mathbb{\N})$. The \emph{change-of-basis matrix} $U \in \mathcal{B}(\ell^2(\mathbb{N})$ between $\{r_k : k\in \N\}$ and  $\{s_k : k\in \N\}$, is given by 
\[ U_{i,j} =\ind{r_j, s_i}, \quad\text{and}\quad y = Ux, \] 
where $U$ is unitary, since both bases are orthonormal.

Given a finite set of (noiseless) samples, the previously mentioned reconstruction techniques compute an approximation to $f$, by utilising the reconstruction space $\mathcal{R}_{M}$. We review three of the most modern approaches. 

\begin{enumerate}[label=(\roman*)]

\item (Generalised sampling). In generalised sampling \cite{adcock2012generalized, Beyond13} one has access to the $N$ samples $P_{N}y$ and using these we solve the least squares problem 
\begin{equation} 
\label{eq:GS_approx}
    \min_{z \in \C^{M}} \| P_{N} U P_M z - P_{N} y \|^{2}_{\ell^2}, \quad \text{where}\quad N \geq M.  
\end{equation}
Let $\widetilde{x} = \{\widetilde{x}_k\}_{k=1}^{M}$ be the minimiser of \eqref{eq:GS_approx}. In generalised sampling we approximate $f$ with $\widetilde{f} = \widetilde{x}_1 r_1 + \cdots + \widetilde{x}_{M}r_{M} \in \mathcal{R}_{M}$. Moreover, the error committed by $\widetilde{f}$, is upper bounded by  \cite[Thm.\ 4.5]{Beyond13}
\begin{equation}\label{eq:errorGS}
\|f-\widetilde{f}\| \leq C_1 \| f-P_{\mathcal{R}_{M}}f \|,
\end{equation}
where $C_1>0$ is a constant depending on the subspace angle between $\mathcal{S}_N$ and $\mathcal{R}_M$ (see \S\ref{s:sub_ang} for details).

\item (PBDW-method).
The PBDW-method \cite{Binev17, maday15} is a data consistent method, which approximates $f$ using the same $N$ samples $P_{N}y$ as in generalised sampling. The approximation is computed as  
$ \widehat{f} = P_{\mathcal{S}_N} f + P_{\mathcal{S}_{N}^{\perp}} \widetilde{f} $
where $\widetilde{f}$ is the generalised sampling approximation. As $\widehat{f} \in \mathcal{H}$, does not lie in a finite dimensional subspace, it can not be represented on a computer.  We may, however, approximate $\widehat{f}$, by choosing some large $K > N$, and use the truncated sum 
$ \widehat{f} \approx \sum_{k=1}^{N} y_k s_k + \sum_{k=N+1}^{K} (P_{K}UP_{M}\widetilde{x})_{k} s_k  $
where $\widetilde{x}$ is the minimizer form \eqref{eq:GS_approx}. It was shown in \cite{maday15}, that the error committed by $\widehat{f}$ is upper bounded by 
\begin{equation}\label{eq:errorPBDW}
\|f-\widehat{f}\| \leq C_1 \|f - P_{\mathcal{R}_{M} \oplus (\mathcal{S}_{N}\cap \mathcal{R}_{M}^{\perp})}f\|,
\end{equation}
where $C_1$ is the same constant as in the generalised sampling error bound above.  

\item (Infinite-dimensional compressive sensing). While the two methods above are linear reconstruction methods, compressive sensing (and more generally sparse regularization), is an example of a non-linear reconstruction method. In compressive sensing one computes an approximation in $\mathcal{R}_{M}$ using $m < N$ samples. Let $\Omega \subset \{1,\ldots, N\}$ have cardinality $m=|\Omega|$ and consider the measurements $P_{\Omega} y$. A standard way of computing a compressive sensing reconstruction is by solving the quadratically constrained basis pursuit optimisation problem
\begin{equation}\label{eq:QCBP} 
    \min_{z \in \C^M} \|z\|_{\ell^1}  \quad\text{subject to}\quad 
    \|P_{\Omega} UP_{M}z - P_{\Omega} y \|_{\ell^2}^{2} \leq \eta. 
\end{equation}
Here $\eta$ is chosen so that $\eta \geq \|P_{\Omega} UP_{M}^{\perp}x\|_{\ell^2}^{2}$, to ensure that $P_{M}x$ is a feasible point. Given a minimizer $x^{\sharp}$ of \eqref{eq:QCBP}, one approximates $f$ with $f^{\sharp} = x^{\sharp}_1 r_1 + \ldots x^{\sharp}_M r_M \in \mathcal{R}_{M}$.
Error bounds for compressive sensing reconstructions are probabilistic in nature and depend on the number of measurements $m$, and the bases $\{s_k\}_{k\in \N}$ and $\{r_k\}_{k\in \N}$ used. For concrete error bounds for Walsh sampling and wavelet reconstruction, we refer to \cite{thesing2021non} for non-uniform
and \cite{AntunUniform} uniform recovery guarantees in infinite-dimensions. For a more general treatment of the subject, we refer to \cite{CSBook, Foucart13}.
\end{enumerate}

\subsection{Contributions}
In this work, we let $\mathcal{H}=L^2([0,1]^d)$, $d=1,2$ and consider the recovery of orthonormal wavelet coefficients from Walsh samples (also called Walsh-Hadamard, or just Hadamard samples). As outlined above, this setup has numerous applications in binary imaging \cite{lensless_im, Clemente:13, muller2006introduction, studer2012compressive}. However, for any of the reconstruction methods mentioned above to work in practice, it is crucial to solve one of the optimisation problems \eqref{eq:GS_approx} or \eqref{eq:QCBP}. To do this, we need to form the matrix $P_{N}UP_{M}$ (potentially also $P_{\Omega}UP_{M}$), for different values of $N$ and $M$. This can be computationally challenging since the entries of $P_{N}UP_{M}$ are given as the solution of $MN$ integrals. Furthermore, -- ignoring the computational burden of computing these integral -- using a densely stored matrix $P_{N}UP_{M}$ has several disadvantages.
\begin{enumerate}[label=(\roman*)]
    \item (Storage). In imaging applications it is not uncommon to have large dimensions, say 
$N=512^2$ and $M=256^2$. However, naively storing a dense matrix $P_{N}UP_{M} \in \mathbb{C}^{N\times M}$ with these dimensions requires approximately $137\,$GB of memory. This is substantially more than most workstations can handle.

    \item (Computational complexity). When solving \eqref{eq:GS_approx} or \eqref{eq:QCBP}, iterative algorithms are often applied. For \eqref{eq:GS_approx}, the conjugate gradient method \cite{hestenes1952methods} is a popular choice, and for \eqref{eq:QCBP} SPGL1 \cite{splg1_paper} or Chambolle and Pock’s primal-dual \cite{chambolle2011first} algorithm are well-known choices. However, all of these algorithms rely on fast matrix-vector multiplications with $P_{N}UP_{M}$ or $P_{\Omega}UP_{M}$, and their adjoins. However, standard matrix-vector multiplication with a $N\times M$ matrix require $\mathcal{O}(MN)$ operations, and for large dimensions this cost can be substantial. 
\end{enumerate}  
While some of these issues can be reduced in higher dimensions ($d > 1$), by considering tensor decompositions of the linear map $P_{N}UP_{M}$, none of these approaches can obtain a computational complexity of $\mathcal{O}(N \log N)$ and avoid storing the matrix $P_{N}UP_{M}$ altogether. In this work, we do exactly this. We present an algorithm, which can compute matrix-vector multiplications with the matrix $P_{N}UP_{M}$ in $\mathcal{O}(N\log N)$\footnote{Note that our bound here, is independent of $M$, but due to the stable sampling rate (see \S\ref{s:sub_ang}), we can take $N=2^{dq}M$ for small values of $q$, usually $q \in \{1,2,3,4\}$ (see Rem. \ref{r:scalingMN}).} operations for Walsh sampling and orthonormal wavelet reconstruction in one and two dimensions without storing the matrix $P_{N}UP_{M}$. Applying the reconstruction methods outlined above allows for fast reconstruction of wavelet coefficients from Walsh samples with minimal memory usage and computational complexity. 

Our work extends the work of Gataric \& Poon \cite{Gataric16}, which derives a similar algorithm for Fourier sampling and wavelet reconstruction. However, our work differs from \cite{Gataric16} in that we utilise special properties of the Walsh functions and derive an algorithm that can be used for both vanishing moments preserving wavelets on the interval \cite{AntunRyan, Cohen93} and periodic wavelets on the interval \cite[Sec.\ 7.5.1]{Mallat09}. The paper is also accompanied by a software implementation in \textsc{MatLab}, demonstrating how this can be implemented in practice. It is an well known issue that \textsc{MatLab}'s implementation of the fast Walsh-Hadamard transform (FWHT), is extremely slow\footnote{See \href{https://ch.mathworks.com/matlabcentral/answers/395334-why-does-the-fwht-function-calculate-slower-than-the-fft-function-even-though-the-documentation-say}{https://ch.mathworks.com/matlabcentral/answers/395334-why-does-the-fwht-function-calculate-slower-than-the-fft-function-even-though-the-documentation-say}}. To mitigate this issue, the implementation also includes a \textsc{MatLab} interface to the C++ library FXT (\url{https://www.jjj.de/fxt/}) \cite{mattersComp}, for speeding up this part of the code. Other time-critical parts of the code have also been written in C++ and interfaced with \textsc{MatLab}. All accompanying code and data are accessible from 
\begin{center}
\vspace{-2mm}
\url{https://github.com/vegarant/cww} 
\vspace{-2mm}
\end{center}
and
\begin{center}
\vspace{-2mm}
\url{https://github.com/vegarant/fastwht}.
%\vspace{-2mm}
\end{center}

\begin{remark}[Avoiding inverse crimes]
    Note that the proposed model avoids certain inverse crimes stemming from too early discretisation of the considered inverse problem. Indeed, by considering an infinite-dimensional model, we model measurements $y_k$ that come from continuous integral transforms $y_k =\int_{0}^{1}f(x)s_{k}(x)\d x$, rather than discrete inner-products. This model is motivated by the observation
that most sensors do not compute pointwise samples of $f$, but rather integrate $f$ over
a short time or area \cite{GLPU_phantom, jones2016continuous}. Discretising the problem at a too early stage using discrete inner products can result in measurement mismatch \cite{chi2011sensitivity}. 
\end{remark}
\begin{remark}[Measurement noise]
Above, we have focused on noiseless measurements to make the mathematical model clear. However, any realistic measurement model should also incorporate noisy measurements. Our overall goal in this manuscript is to develop an algorithm that can compute matrix-vector multiplications with the matrix $P_{N}UP_{M}$ in $\mathcal{O}(N\log N)$ operations. We will, therefore, not discuss noisy measurements in any detail. We refer to the literature on each of the specific reconstruction methods for further discussions on how the methods handle noisy measurements.  
\end{remark}

\subsection{Outline of the paper}
In \S \ref{s:sub_ang} we define the subspace angle and the stable sampling rate, and we explain how these quantities dictate how we must choose $N$ in relation to $M$ to achieve stable and accurate reconstruction. This is followed by the definitions of the Walsh and Wavelet sampling bases in \S\ref{s:s_and_r_space}, along with a key lemma used extensively in the derivation of the algorithm. We then describe the algorithm in one and two dimensions in \S\ref{sec:one_dim_alg} and \S\ref{sec:two_dim_alg}, respectively, followed numerical examples in \S\ref{s:num_exp}.

\section{The subspace angle and the stable sampling rate}
\label{s:sub_ang}
It is important to realize that stable and accurate recovery in $\mathcal{R}_{M}$, from samples $y_{k} =\allowbreak \ind{f,s_{k}}$, $k=1,\ldots N$, is not possible for arbitrary choices of bases $\{s_{k}: k\in \N\}$ and $\{r_k : k \in \N\}$. What is crucial for accurate and stable recovery in $\mathcal{R}_{M}$, is that the subspace angle between $\mathcal{S}_{N}$ and $\mathcal{R}_{M}$ is sufficiently small.
\begin{definition}[Subspace angle]
    Let $\Rs_{M} = \allowbreak \Span \{r_1,\ldots r_{M}\}$ and  $\Ss_{N} = \allowbreak \Span\{s_1, \ldots, \allowbreak s_{N}\}$. The \emph{subspace angle} $\omega \in [0,\pi/2]$
    between $\Rs_{M}$ and $\Ss_{N}$ is 
    \[ \cos(\omega(\Rs_{M}, \Ss_{N})) \coloneqq \inf_{h\in \Rs_{M}, \|h\|=1} \|P_{\Ss_{N}} h\| \] 
    We set the reciprocal value as $\mu(\Rs_{M}, \Ss_{N}) \coloneqq \frac{1}{\cos(\omega(\Rs_{M}, \Ss_{N}))}$, and
    if $\cos(\omega(\Rs_{M}, \Ss_{N})) = 0$, we set $\mu(\Rs_{M}, \Ss_{N}) = \infty$.
\end{definition}
We note that a necessary condition for  $\mu(\Rs_{M},\Ss_{N}) < \allowbreak \infty$ is that $N \geq M$ (see e.g.  \cite[Thm.\ 2.1]{Tang00}). Furthermore, we have that $\mu(\mathcal{R}_{M}, \mathcal{S}_N)$, is related to the condition number of the matrix $P_{M}U^*P_{N}UP_{M}$, used for solving the normal equations in generalised sampling or the PBDW-method. Indeed, let $\sigma_1(A) \geq \cdots \geq \sigma_{M}(A)$ denote the ordered singular values of a matrix $A \in \C^{N\times M}$, with $N > M$. Then, using Parseval's identity, we have that 
\[ \cos(\omega(\Rs_{M}, \Ss_{N})) \coloneqq \inf_{h\in \Rs_{M}, \|h\|=1} \|P_{\Ss_{N}} h\| = \inf_{z\in \C^{M}, \|z\|=1} \|P_{N}UP_M z\|  = \sigma_{M}(P_N U P_M).\]
We also have that $\sigma_{1}(P_N U P_M) = \sup_{z\in \C^{M}, \|z\|=1} \|P_{N}UP_M z\|   \leq 1$, since $U$ is unitary, and hence the condition number
\[\text{cond}(P_{M}U^*P_N U P_M) 
= \frac{\sigma_{1}^{2}(P_{N}UP_M)}{\sigma_{M}^{2}(P_{N}UP_M)} 
\leq \mu^{2}(\Rs_{M}, \Ss_{N}).\]
This directly relates to the numerical stability of the normal equations, used to solve the least-squares problem \eqref{eq:GS_approx}, and compute the generalised sampling and the PBDW-method's solution.  

Furthermore, the accuracy of these two methods is also related to the subspace angle. Indeed, the constant  $C_1$ fund in the error bounds \eqref{eq:errorGS} and \eqref{eq:errorPBDW} equals $C_1 = \mu(\mathcal{R}_M, \mathcal{S}_{N})$. See \cite[Thm.\ 4.5]{Beyond13} and \cite[Eq.\ (1.7)]{Binev17} (and \cite{maday15} for earlier work). Thus both the numerical stability and accuracy of these two methods hinges on choosing $\mathcal{R}_M$ in relation to the samples one can acquire. 

The situation is the same in infinite-dimensional compressive sensing, but the quantity $\mu (\mathcal{R}_{M}, \mathcal{S}_{N})$, is camouflaged via the so-called \emph{balancing property}, introduced in \cite{adcock2016generalized}. In infinite-dimensional compressive sensing, the balancing property typically governs the required number of samples needed to satisfy the restricted isometry property (RIP) \cite{Foucart13}, and its generalisations \cite{AntunUniform, Bastounis17, Traonmilin15}, for certain constants. These constants will again affect the constants found in the error bound for the minimiser $x^{\sharp}$ of \eqref{eq:QCBP}, see, e.g., \cite{Foucart13} for details. To see the relation between the subspace angle and the balancing property, we refer to the proof of Proposition 4.4 in \cite{AntunUniform}.

From the above discussion, it is evident that the subspace angle between $\mathcal{R}_{M}$ and $\mathcal{S}_{N}$, affects both the accuracy and the stability of all the reconstruction methods. Thus, an important question is, therefore, how we should choose $N$ in relation to $M$, to ensure that $\mu(\mathcal{R}_M, \mathcal{S}_{N}) \leq \gamma$ stays bounded. This relates to the so-called \emph{stable sampling rate} \cite{adcock2016generalized, Beyond13}.

\begin{definition}[Stable sampling rate]
\label{def:ssr}
Let $\Rs_{M} = \Span \{r_1,\ldots r_{M}\}$ and $\Ss_{N} = \Span\{s_1,\ldots, s_{N}\}$. 
The \emph{stable sampling rate} 
for $M\in \mathbb{N}$ and $\gamma > 1 $ is 
\[ \Gamma(M, \gamma) = \min\{N \in \mathbb{N}: \mu(\Rs_{M}, \Ss_{N}) \leq \gamma\}. \]   
\end{definition}

For Walsh sampling and orthonormal wavelet reconstruction in $\mathcal{H} = L^{2}([0,1]^d)$, $d\geq 1$, it was shown by Hansen \& Thesing \cite{ThesingSSR} that the stable sampling rate scales linearly in $M$.  That is, for a fixed $\gamma > 1$, there exist a constant $q_{\gamma} \geq 0$ such that whenever $N = 2^{d(r+q_{\gamma})} \geq 2^{dr} = M$ for $r \in \mathbb{N}$, we have $\mu(\Rs_{M}, \Ss_{N}) \leq \gamma$. Hence for a fixed $q_{\gamma}>0$, we get a fixed upper bound on $\mu(\Rs_{M},\Ss_{N})$, for all $M$ and $N$ on the form above.  

This is important, since it tells us that for a fixed number of reconstruction coefficients $M$, we need no more than $N= C M$ samples, where $C = 2^{dq_{\gamma}}$ is a constant, to ensure that $\mu(\mathcal{R}_M, \mathcal{S}_N) \leq \gamma$. 
In Table \ref{t:sing_val}, we have computed $1/\sigma_M(P_N U P_M) = \mu(\mathcal{R}_M, \mathcal{S}_N)$, for $N=2^{dq}M$, for $d=1,2$ and $q=1,2,3,4$, for Walsh sampling and different wavelet reconstruction bases. From the table, we see that in all cases the choice $q=1$ or $q=2$ is sufficient to ensure that $1 < \gamma < 2$, indicating that the constant $C$ is not necessarily very large for these bases. 

\section{The sampling and reconstruction spaces}
\label{s:s_and_r_space}
This section introduces the necessary notation and background on the Walsh sampling basis and the orthonormal wavelet reconstruction bases. We also present a few useful results, needed to derive the final algorithm in later sections.
\subsection{Walsh functions}
\label{ss:wal}

Walsh functions (see \cite{beauchamp75} or \cite{Golubov91} for an introduction) are closely related to dyadic representations of numbers. For an integer $n \in \Z_{+} = \{0, 1, 2, \ldots\}$ its dyadic series is
$ n = n^{(1)}2^{0} + n^{(2)}2^{1} + n^{(3)}2^{2} + \cdots$,
where the $n^{(j)}$'s are binary numbers. Similarly for $x \in [0,1)$ we can express its dyadic series as $ x = x^{(1)}2^{-1} + x^{(2)}2^{-2} + x^{(3)}2^{-3} + \cdots$,
for $x^{(j)} \in \{0,1\}$. For rational numbers $x$, this expansion is not unique and in such cases we consider the expansion not ending with infinitely many repeating 1's.
 
There exist different orderings of Walsh functions, all of which leads to slightly different definitions. In this manuscript, we use the \emph{sequency} ordered Walsh functions. This ordering has the advantage that the $n$'th Walsh function $w_n$ has $n$ sign changes. 

\begin{definition}
    \label{def:walsh_func}
    Let $n \in \Z_{+}$ and $x \in [0,1)$. The \emph{Walsh function} 
    $w_n\colon [0,1) \to \{+1, -1\}$ is given by
         $w_n(x) \coloneqq (-1)^{\sum_{j=1}^{\infty} (n^{(j)} + n^{(j+1)})x^{(j)}}$
\end{definition}

We note that $\{w_{n}: n \in \mathbb{Z}_{+}\}$ is an orthonormal basis for $L^2([0,1])$, and we let 
\begin{equation*}
    \mathcal{W}f(n) = \int_{0}^{1} f(x) w_{n}(x) \d x 
\end{equation*}
denote the Walsh transform of a function $f \in L^2([0,1])$.

When working with Walsh functions, the XOR operation applied to binary sequences has many uses. We denote it by $\oplus$ and define it as follows.
\begin{definition}
    Let $x = \{x^{(j)}\}_{j=1}^{\infty} \in \{0,1\}^{\mathbb{N}}$ and $y = \{y^{(j)}\}_{j=1}^{\infty}\in \{0,1\}^{\mathbb{N}}$ be binary sequences. 
The operation $\oplus$ applied to these sequences is given by
      $  x \oplus y \coloneqq \{ |x^{(j)} - y^{(j)}|\}_{j=1}^{\infty}$. 
    For $x,y \in \Z_{+}$ or $x,y \in [0,1)$, the operation $x \oplus y$ is understood in the sense of $x$ and $y$'s representation as binary sequences.
\end{definition}

\begin{lemma}
\label{l:wal_prop}
For $x,y \in [0,1)$, $n,j,l\in \Z_{+}$, the following three equalities holds
\begin{align} 
    \label{eq:w_prop1}
    w_{n} (x\oplus y) &= w_{n}(x)w_{n}(y), \\
    \label{eq:wal_shift2}
    w_{n}(2^{-j}l) &= w_{l}(2^{-j}n)\quad\quad\text{if } n,l < 2^{j}, \\
    w_n(2^{-j}x) &= w_{\floor{n/2^j}}(x).
\end{align}
\end{lemma}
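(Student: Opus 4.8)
The plan is to prove the three identities in Lemma~\ref{l:wal_prop} directly from Definition~\ref{def:walsh_func}, by tracking how the dyadic digits of the arguments are affected by the operations $\oplus$, dyadic scaling, and the symmetric roles of $n$ and $x$.

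For \eqref{eq:w_prop1}, the key observation is that if $z = x \oplus y$ then the $j$-th dyadic digit is $z^{(j)} = x^{(j)} \oplus y^{(j)}$, which over $\Z/2\Z$ equals $x^{(j)} + y^{(j)} \bmod 2$. Plugging this into the exponent $\sum_{j=1}^{\infty}(n^{(j)}+n^{(j+1)})z^{(j)}$ and using that $(-1)^{a+b} = (-1)^a(-1)^b$ together with $(-1)^{2c}=1$, the exponent splits additively modulo $2$ into the exponent for $x$ plus the exponent for $y$, giving $w_n(x\oplus y) = w_n(x)w_n(y)$. The one point requiring a word of care is the non-uniqueness of the dyadic expansion for dyadic rationals: I would note that $\oplus$ is defined on the chosen (non-terminating-in-$1$'s) representatives, so the identity is to be read with that convention, and the finite sums involved make convergence a non-issue since only finitely many $n^{(j)}$ are nonzero.

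For \eqref{eq:wal_shift2}, with $n,l<2^j$, I would write $2^{-j}l$ explicitly in dyadic form: since $l = \sum_{k=1}^{j} l^{(k)}2^{k-1}$, we get $(2^{-j}l)^{(i)} = l^{(j+1-i)}$ for $1\le i\le j$ and $0$ otherwise; similarly for $2^{-j}n$. Substituting into the definition, $w_n(2^{-j}l)$ has exponent $\sum_{i=1}^{j}(n^{(i)}+n^{(i+1)})l^{(j+1-i)}$, and $w_l(2^{-j}n)$ has exponent $\sum_{i=1}^{j}(l^{(i)}+l^{(i+1)})n^{(j+1-i)}$. Reindexing one of these sums by $i \mapsto j+1-i$ and expanding the telescoping-type cross terms $n^{(i)}l^{(k)}$, both exponents reduce modulo $2$ to the same bilinear form $\sum_{i+k\le j+1}$-type expression in the digits of $n$ and $l$ (equivalently $\sum_{i=1}^{j-1} (\text{partial sums})$), so the two Walsh values agree. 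This symmetry is essentially the statement that the Walsh matrix $(w_n(2^{-j}l))_{n,l<2^j}$ is symmetric. Finally, \eqref{eq:wal_shift2} can be fed back: $w_n(2^{-j}x)$ — here $2^{-j}x$ simply shifts every dyadic digit of $x$ right by $j$ places, i.e.\ $(2^{-j}x)^{(i)} = x^{(i-j)}$ for $i>j$ and $0$ for $i\le j$ — so the exponent $\sum_{i>j}(n^{(i)}+n^{(i+1)})x^{(i-j)} = \sum_{k\ge 1}(n^{(k+j)}+n^{(k+j+1)})x^{(k)}$, and since $n^{(k+j)} = (\floor{n/2^j})^{(k)}$, this is exactly the exponent defining $w_{\floor{n/2^j}}(x)$.

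The main obstacle I anticipate is purely bookkeeping: keeping the digit-index reindexing straight in \eqref{eq:wal_shift2} (the roles of $n$ and $l$ being interchanged under $i\mapsto j+1-i$) and making sure the ``consecutive-digit sum'' $n^{(j)}+n^{(j+1)}$ in the exponent is handled correctly at the boundary index $i=j$, where $n^{(j+1)}$ appears even though $2^{-j}l$ has no digit beyond position $j$. None of the three parts requires more than manipulating finite sums modulo $2$; the work is in choosing notation that makes the symmetry manifest rather than in any genuine difficulty.
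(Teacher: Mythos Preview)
Your proposal is correct, and in fact you do considerably more than the paper does: the paper's proof of this lemma merely cites standard references for the first two identities and an external proposition for the third, without carrying out any computation. Your direct digit-manipulation argument from Definition~\ref{def:walsh_func} is a sound and self-contained alternative; the boundary bookkeeping you flag in \eqref{eq:wal_shift2} is exactly where the condition $n,l<2^j$ enters (the stray terms $n^{(j+1)}l^{(1)}$ and $n^{(1)}l^{(j+1)}$ both vanish), and your argument for the third identity is independent of \eqref{eq:wal_shift2} despite the phrase ``can be fed back''.
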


\begin{proof}
    The two first equalities can be found in any book on Walsh functions, see e.g.
\cite{Golubov91}. The last equality follows from direct computations, see e.g. \cite[Prop. 6.4]{AntunUniform}.
\end{proof}
\begin{figure}
    \centering
    \begin{\textsizefig}
    \setlength{\tabcolsep}{10pt}
    \begin{tabular}{@{}>{\centering}m{0.30\textwidth}>{\centering\arraybackslash}m{0.30\textwidth}@{}}
    Seq.\ ord.\ Walsh func.  & Seq.\ ord.\ Hadamard mat. \\ 
 %   & \\
    \resizebox{\linewidth}{!}{
    \begin{tikzpicture}
        \draw[very thick]  (0, 17) -- (0.625, 17) -- (1.25, 17)  -- (1.875, 17)  -- (2.5, 17)  -- (3.125, 17)  -- (3.75, 17)  -- (4.375, 17)  -- (5, 17)  -- (5.625, 17)  -- (6.25, 17)  -- (6.875, 17)  -- (7.5, 17)  -- (8.125, 17)  -- (8.75, 17)  -- (9.375, 17)  -- (10, 17) ;
        \draw (-0.5, 17) node {{\LARGE $w_0$}};
        \draw[very thick]  (0, 15.9233) -- (0.625, 15.9233) -- (1.25, 15.9233)  -- (1.875, 15.9233)  -- (2.5, 15.9233)  -- (3.125, 15.9233)  -- (3.75, 15.9233)  -- (4.375, 15.9233)  -- (5, 15.9233)  -- (5, 15.2983) -- (5.625, 15.2983) -- (6.25, 15.2983)  -- (6.875, 15.2983)  -- (7.5, 15.2983)  -- (8.125, 15.2983)  -- (8.75, 15.2983)  -- (9.375, 15.2983)  -- (10, 15.2983) ;
        \draw (-0.5, 15.9233) node {{\LARGE $w_1$}};
        \draw[very thick]  (0, 14.8467) -- (0.625, 14.8467) -- (1.25, 14.8467)  -- (1.875, 14.8467)  -- (2.5, 14.8467)  -- (2.5, 14.2217) -- (3.125, 14.2217) -- (3.75, 14.2217)  -- (4.375, 14.2217)  -- (5, 14.2217)  -- (5.625, 14.2217)  -- (6.25, 14.2217)  -- (6.875, 14.2217)  -- (7.5, 14.2217)  -- (7.5, 14.8467) -- (8.125, 14.8467) -- (8.75, 14.8467)  -- (9.375, 14.8467)  -- (10, 14.8467) ;
        \draw (-0.5, 14.8467) node {{\LARGE $w_2$}};
        \draw[very thick]  (0, 13.77) -- (0.625, 13.77) -- (1.25, 13.77)  -- (1.875, 13.77)  -- (2.5, 13.77)  -- (2.5, 13.145) -- (3.125, 13.145) -- (3.75, 13.145)  -- (4.375, 13.145)  -- (5, 13.145)  -- (5, 13.77) -- (5.625, 13.77) -- (6.25, 13.77)  -- (6.875, 13.77)  -- (7.5, 13.77)  -- (7.5, 13.145) -- (8.125, 13.145) -- (8.75, 13.145)  -- (9.375, 13.145)  -- (10, 13.145) ;
        \draw (-0.5, 13.77) node {{\LARGE $w_3$ }};
        \draw[very thick]  (0, 12.6933) -- (0.625, 12.6933) -- (1.25, 12.6933)  -- (1.25, 12.0683) -- (1.875, 12.0683) -- (2.5, 12.0683)  -- (3.125, 12.0683)  -- (3.75, 12.0683)  -- (3.75, 12.6933) -- (4.375, 12.6933) -- (5, 12.6933)  -- (5.625, 12.6933)  -- (6.25, 12.6933)  -- (6.25, 12.0683) -- (6.875, 12.0683) -- (7.5, 12.0683)  -- (8.125, 12.0683)  -- (8.75, 12.0683)  -- (8.75, 12.6933) -- (9.375, 12.6933) -- (10, 12.6933) ;
        \draw (-0.5, 12.6933) node {{\LARGE $w_4$}};
        \draw[very thick]  (0, 11.6167) -- (0.625, 11.6167) -- (1.25, 11.6167)  -- (1.25, 10.9917) -- (1.875, 10.9917) -- (2.5, 10.9917)  -- (3.125, 10.9917)  -- (3.75, 10.9917)  -- (3.75, 11.6167) -- (4.375, 11.6167) -- (5, 11.6167)  -- (5, 10.9917) -- (5.625, 10.9917) -- (6.25, 10.9917)  -- (6.25, 11.6167) -- (6.875, 11.6167) -- (7.5, 11.6167)  -- (8.125, 11.6167)  -- (8.75, 11.6167)  -- (8.75, 10.9917) -- (9.375, 10.9917) -- (10, 10.9917) ;
        \draw (-0.5, 11.6167) node {{\LARGE $w_5$}};
        \draw[very thick]  (0, 10.54) -- (0.625, 10.54) -- (1.25, 10.54)  -- (1.25, 9.915) -- (1.875, 9.915) -- (2.5, 9.915)  -- (2.5, 10.54) -- (3.125, 10.54) -- (3.75, 10.54)  -- (3.75, 9.915) -- (4.375, 9.915) -- (5, 9.915)  -- (5.625, 9.915)  -- (6.25, 9.915)  -- (6.25, 10.54) -- (6.875, 10.54) -- (7.5, 10.54)  -- (7.5, 9.915) -- (8.125, 9.915) -- (8.75, 9.915)  -- (8.75, 10.54) -- (9.375, 10.54) -- (10, 10.54) ;
        \draw (-0.5, 10.54) node {{\LARGE $w_6$}};
        \draw[very thick]  (0, 9.46333) -- (0.625, 9.46333) -- (1.25, 9.46333)  -- (1.25, 8.83833) -- (1.875, 8.83833) -- (2.5, 8.83833)  -- (2.5, 9.46333) -- (3.125, 9.46333) -- (3.75, 9.46333)  -- (3.75, 8.83833) -- (4.375, 8.83833) -- (5, 8.83833)  -- (5, 9.46333) -- (5.625, 9.46333) -- (6.25, 9.46333)  -- (6.25, 8.83833) -- (6.875, 8.83833) -- (7.5, 8.83833)  -- (7.5, 9.46333) -- (8.125, 9.46333) -- (8.75, 9.46333)  -- (8.75, 8.83833) -- (9.375, 8.83833) -- (10, 8.83833) ;
        \draw (-0.5, 9.46333) node {{\LARGE $w_7$}};
               
    \end{tikzpicture}
    }
    &
     \includegraphics[width=0.75\linewidth]{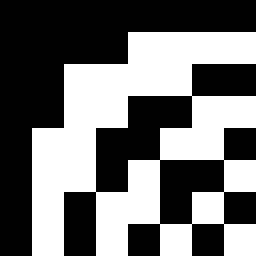} 
    \end{tabular}
    \end{\textsizefig}
    \vspace{-3mm}
    \caption{\label{f:Walsh_had}
(\textbf{Relation between Walsh functions and Hadamard matrices}). Left: The eight first sequency ordered Walsh functions. Right: A $8\times 8$ sequency ordered Hadamard matrix, where black corresponds to $1$ and white to $-1$. We can see that the Walsh functions' sign changes correspond to the sign changes in the matrix. }
%\vspace{-4mm}
\end{figure}

We also note that Walsh functions and Hadamard matrices are closely related and the 
$(n,k)$'th entry of a sequency ordered Hadamard matrix $H \in \R^{2^{j}\times 2^{j}}$ is given by $w_{n-1}(2^{-j}(k-1))$. See Figure \ref{f:Walsh_had} for an illustration of this relationship. Furthermore, for $N=2^j$ we note that a matrix-vector product with $H$ can be computed in $\mathcal{O}(N\log N)$ operations using the fast Walsh-Hadamard transform (FWHT) \cite{beauchamp75}. That is, for $x = \{x_k\}_{k=1}^{N}$, the $N$ sums
\begin{align*}
    \left\{\sum_{k=1}^{N} w_{n}((k-1)/N) x_{k} \right\}_{n=0}^{N-1} 
\end{align*}
can utilize the FWHT algorithm to compute the result with $\mathcal{O}(N\log N)$ operations, and without storing the matrix $H$ in memory.

\subsection{Wavelets}
\label{subsec:wavelets1}

Let $\phi \colon \R \to \R$ and $\psi \colon \R \to \R$ be a compactly supported orthonormal scaling function and wavelet \cite{Daubechies92}, respectively, corresponding to an multiresolution analysis (MRA). We say that the wavelet $\psi$ has $\nu$ vanishing moments if it is orthogonal to all polynomials of degree $\nu-1$. That is, if $\indi{x^k, \psi}_{L^2(\mathbb{R})} = 0$ for $k=0,\ldots, \nu-1$. 
For simplicity, we work with wavelets with minimal support. Thus, for $\nu=1$ the above wavelet is the Haar wavelet, but for $\nu \geq 2$ there are different choices, ranging from the classical
\textit{Daubechies wavelet} (which has minimum-phase) to
\textit{symlets} which are close to being symmetric, but with a larger phase
\cite[p.\ 294]{Mallat09}.

If $\psi$ generates a system of orthonormal wavelets with $\nu$ vanishing moments and minimal support, then the support of $\psi$ and $\phi$ is an interval of size $2\nu-1$. For convenience, we use the convention that 
$\Supp (\phi) = \Supp (\psi) = [-\nu+1, \nu]$.

Let 
$\phi_{j,m} (x) \coloneqq 2^{j/2} \phi(2^j x -m )$ and 
$\psi_{j,m} (x) \coloneqq 2^{j/2} \psi (2^j x - m)$
denote the dilated and translated versions of $\phi$ and $\psi$. To work on the interval $[0,1]$, we need to construct bases on this interval consisting of functions $\phi_{j,m}$ and $\psi_{j,m}$, with $j \geq J_0$ for some $J_0$, chosen so that $\Supp(\phi_{j,m}) =\Supp(\psi_{j,m}) \subset [0,1]$ for at least one choice of $m$. It is readily seen that if $J_0 \geq \ceil{\log_{2}(2\nu)}$ for $\nu\geq 2$ and $J_0 \geq 0$ for $\nu=1$, then this holds for at least one $m$.

Constructing an orthonormal wavelet basis on the interval requires special care at the boundaries, and it is common to replace all wavelets and scaling functions intersecting the boundary with certain \enquote{replacement} functions.  Hence for $j \geq J_0$ we define the set of functions
\begin{equation*}
    \begin{split}
B_{\phi,j} &= \left\{ \phi_{j,m}^{\text{rep}} \right\}_{m=0}^{\nu-1} \bigcup
\left\{\phi_{j,m}\right\}_{m=\nu}^{2^j-\nu-1} \bigcup
\left\{\phi_{j,m}^{\text{rep}}\right\}_{m=2^j-\nu}^{2^j-1}, \\
B_{\psi,j} &= \left\{ \psi_{j,m}^{\text{rep}} \right\}_{m=0}^{\nu-1} \bigcup
\left\{\psi_{j,m}\right\}_{m=\nu}^{2^j-\nu-1} \bigcup
\left\{\psi_{j,m}^{\text{rep}}\right\}_{m=2^j-\nu}^{2^j-1}   
    \end{split}
    \label{eq:wave_bases}
\end{equation*}
where $\psi_{j,m}^{\text{rep}}$ and $\phi_{j,m}^{\text{rep}}$ are replacement wavelets and scaling functions supported on $[0,1]$.  There are several ways to construct these replacement functions so that they retain the orthonormality condition, and we consider both a periodic boundary extension and the vanishing moments preserving (VMP) boundary wavelets introduced by Cohen, Daubechies \& Vial in \cite{Cohen93}.

The advantage of the former is that it is both easy to define and implement. Indeed, to compute a discrete wavelet transform (DWT) using a periodic boundary extension, one simply use a periodic convolutions between between the filters and the signal. The disadvantage of the periodic wavelets basis is that we lose the vanishing moments property at the boundaries.  This may result in a few high amplitude coefficients at each scale.  Another issue with these wavelets is that any $\ell^2$-approximation of a non-periodic function on $[0,1]$ will have certain artefacts at the boundaries due to the underlying assumption of periodicity.

This can be seen in Figure \ref{fig:per_vs_bd}, where we consider a generalised sampling reconstructions of the periodic function $f(t) =\cos(2\pi t)$ and non-periodic function $g(t) = \cos(2\pi t)+t$, on $[0,1]$. With a periodic wavelet basis, we achieve high accuracy for the periodic function $f(t)$, whereas we get artefacts at the boundaries when we reconstruct $g(t)$, due to the underlying periodic assumption. 

The vanishing moments preserving boundary extension introduced in \cite{Cohen93} circumvents this issue by designing special wavelets at the boundaries, which retain both orthonormality, vanishing moments and avoids any assumptions about periodicity. However, as pointed out by Antun \& Ryan in \cite{AntunRyan}, most wavelet libraries do not support these wavelets. In \cite{Gataric16} Gataric \& Poon extended the WaveLab library \cite{wavelab} with a special set of Daubechies wavelets. In this work, we use the implementation from \cite{AntunRyan}, to also include orthonormal wavelets such as symlets. 

\begin{figure}[tb]
    {\centering
    \begin{\textsizefig}
    \begin{tabular}{@{}c@{}}
    \setlength{\tabcolsep}{15pt}
    \begin{tabular}{@{}>{\centering}m{0.30\textwidth}>{\centering\arraybackslash}m{0.30\textwidth}@{}}
        Periodic function $f$ & Non-periodic function $g$ \\
        \includegraphics[width=\linewidth]{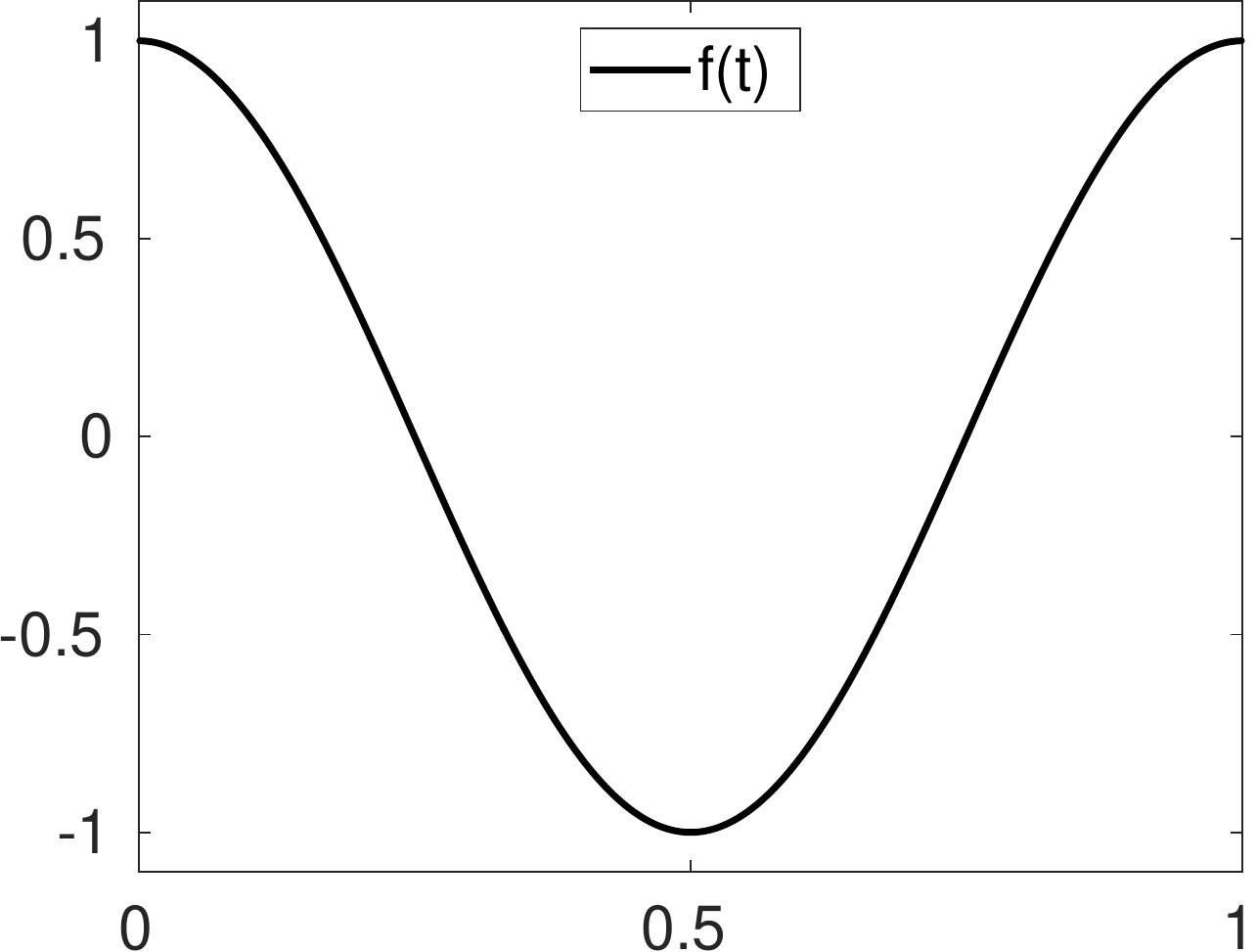} 
       &\includegraphics[width=\linewidth]{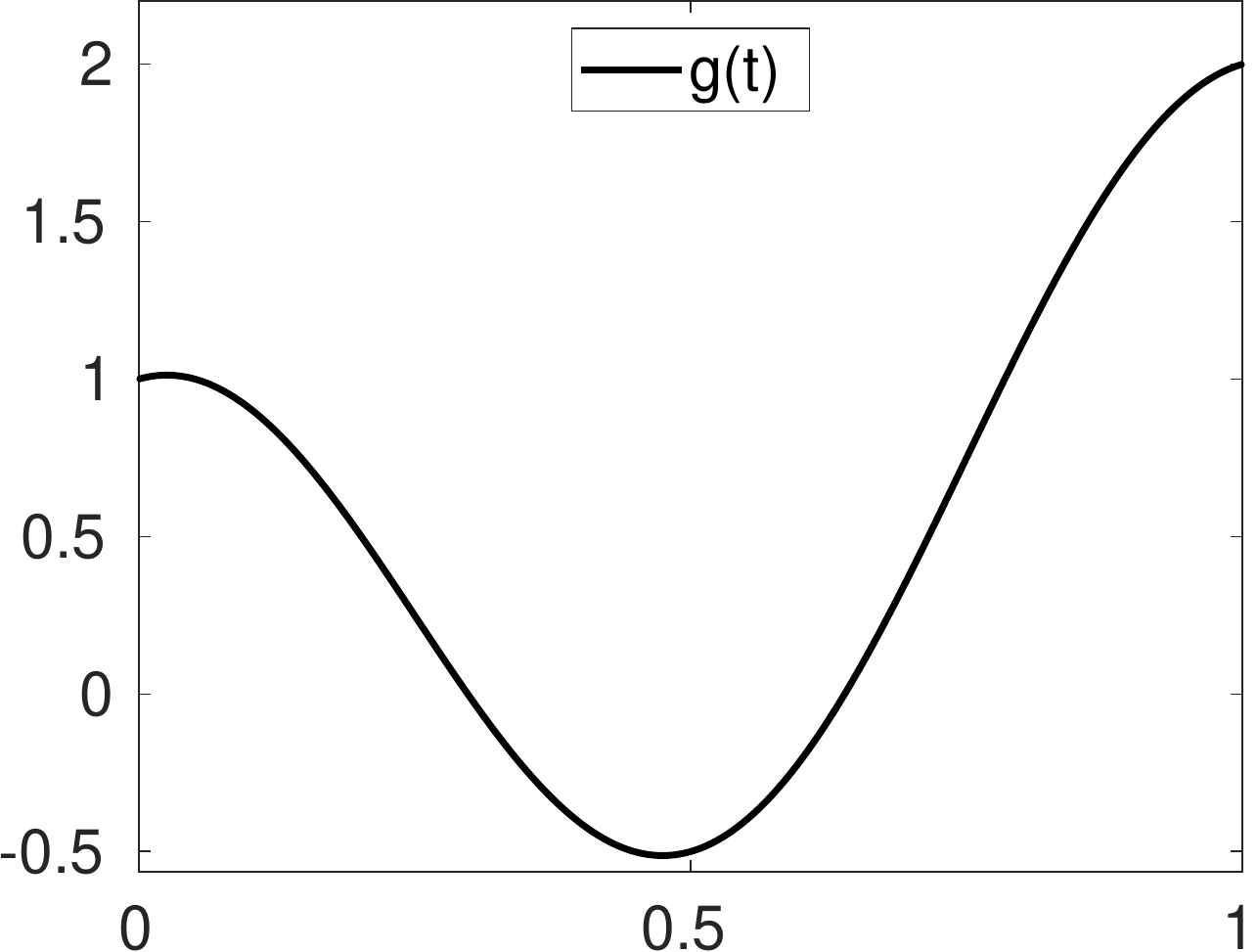} \\ 
    \end{tabular}
    \\
    \begin{tabular}{@{}>{\centering}m{0.30\textwidth}>{\centering}m{0.30\textwidth}>{\centering\arraybackslash}m{0.30\textwidth}@{}}
      GS rec. of $f$ using a periodic wavelet basis 
    & GS rec. of $g$ using a periodic wavelet basis 
    & GS rec. of $g$ using a VMP wavelet basis\\ 
        \includegraphics[width=\linewidth]{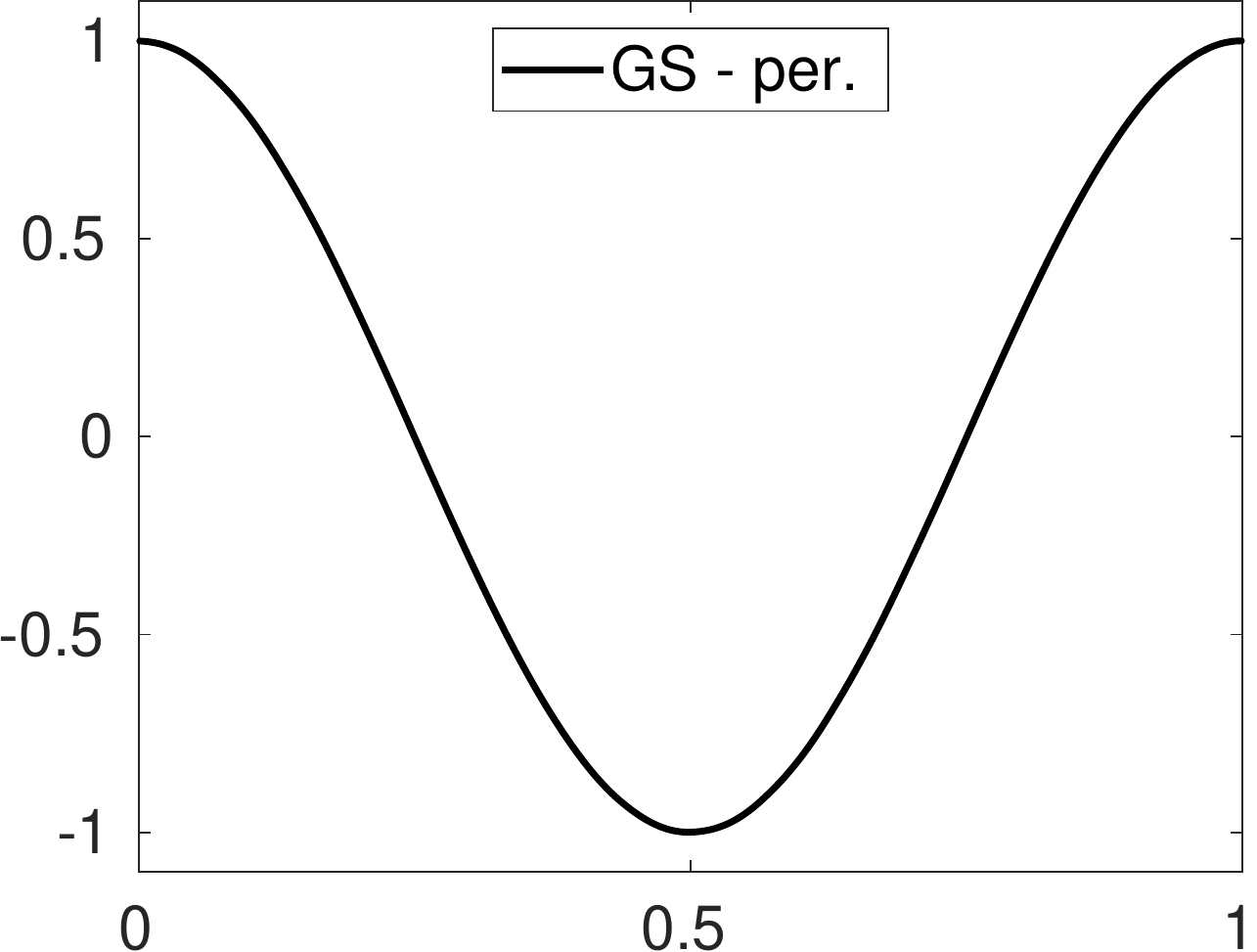} 
       &\includegraphics[width=\linewidth]{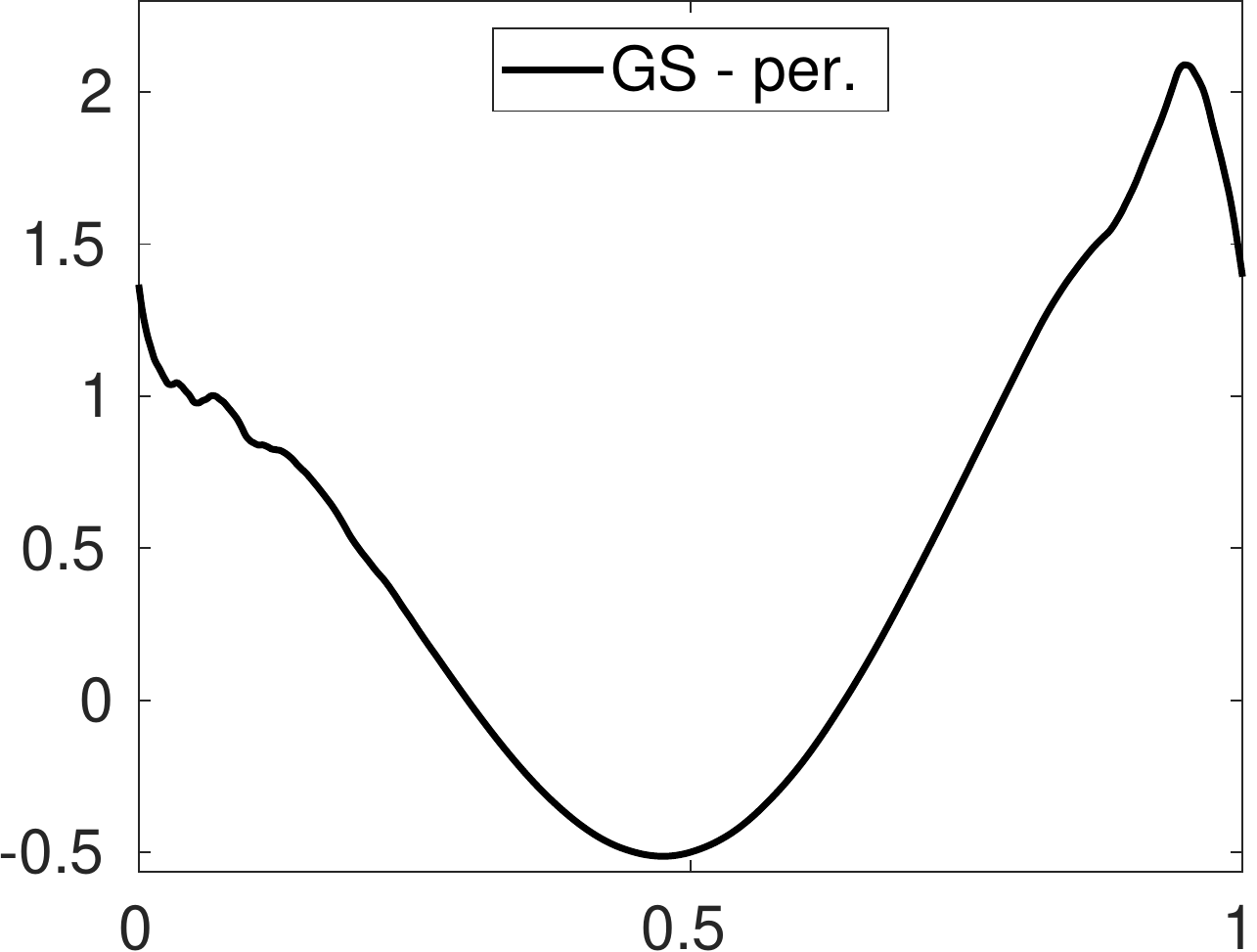} 
       &\includegraphics[width=\linewidth]{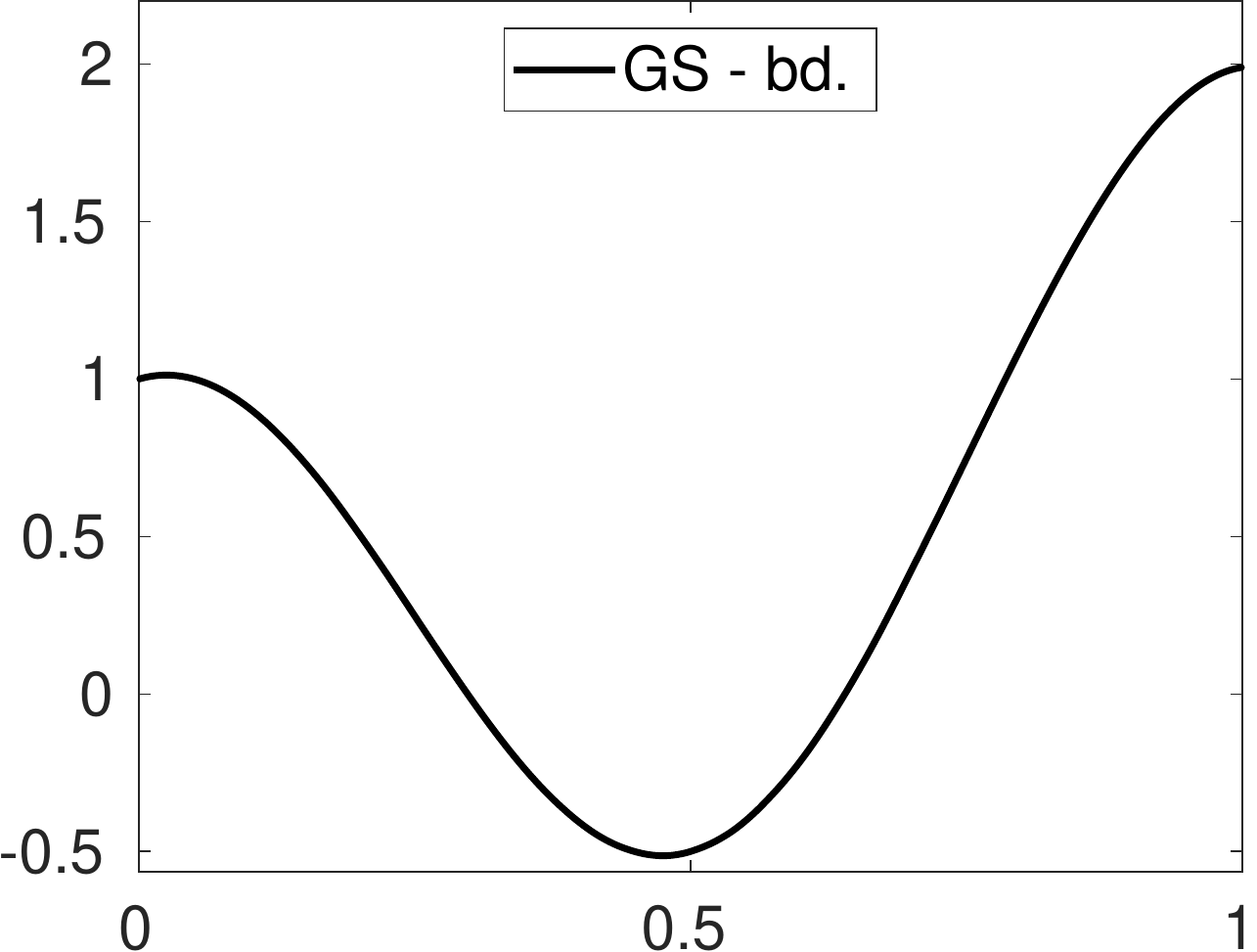} 
    \end{tabular}
    \end{tabular}
    \end{\textsizefig}
    }
    \vspace{-3mm}
    \caption{\label{fig:per_vs_bd}(\textbf{Periodic boundary extension is best suited for periodic functions}). 
We consider the two functions $f(t) = \cos(2\pi t)$  and $g(t) = \cos(2\pi t) + t$ (top row). Given $N=32$ Walsh samples from these functions, we use generalized sampling (GS) with $M=16$ DB4 wavelet-basis functions with different boundary extensions for reconstruction. For the reconstruction basis, we use a periodic boundary extension (bottom row, left and middle) and vanishing moments preserving (VMP) boundary extension (bottom row, right). As we can see from the middle figure at the bottom row, the periodic boundary wavelets create artefacts at the boundaries when approximating a non-periodic function. The VMP boundary wavelets circumvent this issue.  
}
%\vspace{-4mm}
\end{figure}

For the periodic wavelet basis, we extend the wavelets and scaling functions at the boundaries periodically. That is, we let
\begin{align*}
    \phi_{j,m}^{\text{per}} &= \phi_{j,m}\lvert_{[0,1]} + \phi_{j,2^{j} + m}\lvert_{[0,1]} 
    &&\text{for } m = 0,\ldots, \nu-1,  \\
    \phi_{j,m}^{\text{per}} &= \phi_{j,m}\lvert_{[0,1]} + \phi_{j,m - 2^{j}}\lvert_{[0,1]} 
    &&\text{for } m = 2^{j}-\nu, \ldots, 2^{j}-1,
\end{align*}
and similar for $\psi^{\text{per}}_{j,m}$. Here $\lvert_{[a,b]}$ means the restriction to the interval $[a,b]$. Strictly speaking, we could have omitted the definition of $\phi_{j,\nu}^{\text{per}}, \psi_{j,\nu}^{\text{per}}$ and $\phi_{j,2^{j}-\nu}^{\text{per}}$, $\psi_{j,2^{j}-\nu}^{\text{per}}$, as these function are pure interior functions, but we define these functions to unify the notation with the vanishing moments preserving boundary wavelets,

In \cite{Cohen93} one constructs special boundary wavelets and scaling functions $\phi_{m}^{\text{left}}$, $\psi_{m}^{\text{left}}$, $\phi_{m}^{\text{right}}$, and $\psi_{m}^{\text{rigth}}$, for $m=0,\ldots, \nu-1$. These functions are created using finite linear combinations of the interior functions, and their supports are staggered. That is 
$\Supp(\phi^{\text{left}}_{m}) = [0,\nu+m]$ and $\Supp(\phi_{m}^{\text{right}}) = [-m-\nu,0]$
and similar for $\psi^{\text{left}}_{m}$ and $\psi_{m}^{\text{right}}$. The corresponding boundary functions (similar for the wavelets) are defined as 
\begin{equation*}
\label{eq:bdwave}
\begin{split}
\phi_{j,m}^{\text{bd}}(x) &= 2^{j/2}\phi_{m}^{\text{left}}(2^{j} x) 
\quad\quad\quad\quad\quad\quad ~ \text{for }m = 0,\ldots, \nu-1, \\ 
\phi_{j,m}^{\text{bd}}(x) &= 2^{j/2}\phi_{2^{j}-1-m}^{\text{right}}(2^{j}(x-1)) 
\quad\quad\text{for }m = 2^{j}-\nu, \ldots, 2^{j}-1. 
\end{split}
\end{equation*}
With these functions well defined, we let \enquote{rep}, mean either \enquote{per} or \enquote{bd}.  

Let $\mathcal{V}_{j} = \Span\{ B_{\phi, j}\}$ and $  \mathcal{U}_{j} = \Span\{B_{\psi,j}\}$, and note that by construction these satisfy $\mathcal{V}_{j}
\oplus \mathcal{U}_{j} = \mathcal{V}_{j+1}$. 
Now, let $C_{\psi, j} = B_{\phi, J_0} \cup B_{\psi, J_0} \cup \cdots \cup B_{\psi, j-1}$.
It should be clear from the previous discussion that $B_{\phi,j}$ and $C_{\phi, j}$ span the same space.  We can perform a change-of-basis between the two bases using a DWT matrix $W \in \R^{2^j \times 2^j}$. 

Finally, note that there no closed-form formula exists for the compactly supported orthonormal wavelets considered (except for the Haar wavelet). We can, however, compute approximations to $\phi(2^{j}k)$ and $\psi(2^{j}k)$, at dyadic grid points using the cascade algorithm \cite{Daubechies92}. 

\subsection{A useful lemma}

Before we proceed, we prove a lemma that lays the foundation for the fast computations derived in the following sections. We note that the lemma is a generalisation of what is used in the proof of Lemma 6.6 in \cite{AntunUniform}.

\begin{lemma}
\label{lem:fjk}
   Let $h \in L^{2}(\R)$ with $\Supp (h) \subset [a,b]$ for integers $a \leq 0 <  b$. Denote by
$h_{j,m}(x) = 2^{j/2}h(2^{j}x - m)$ a translated and dilated version of $h$. Suppose that $j,m \in \Z$ are chosen so that
$\Supp(h_{j,m}) \subset [0,1]$. Then
\[ 
    \ind{h_{j,m}, w_n} 
    = 2^{-j/2} \sum_{l=a}^{b-1} w_{n}\left( \frac{l+m}{2^{j}} \right) 
    \mathcal{W}h_{0,-l}\lvert_{[0,1)}\left(\floor{2^{-j}n}\right).
\] 
\end{lemma}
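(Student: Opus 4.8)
The plan is to compute $\ind{h_{j,m},w_n}$ directly from the definition of the inner product on $L^2([0,1])$, namely $\ind{h_{j,m},w_n} = \int_0^1 h_{j,m}(x)w_n(x)\d x$, and to reduce it to a sum of Walsh transforms of integer translates of $h$ by exploiting the dyadic structure of the Walsh functions recorded in Lemma \ref{l:wal_prop}. First I would substitute $h_{j,m}(x) = 2^{j/2}h(2^j x - m)$ and change variables $u = 2^j x - m$, so that $x = 2^{-j}(u+m)$ and $\d x = 2^{-j}\d u$; this turns the integral into $2^{-j/2}\int_{\mathbb{R}} h(u)\, w_n\!\bigl(2^{-j}(u+m)\bigr)\d u$, where the integral is effectively over $[a,b]$ since $\Supp(h)\subset[a,b]$. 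The key structural move is then to split the range of integration into unit intervals $[l,l+1)$ for $l = a,\ldots,b-1$, writing $u = l + t$ with $t \in [0,1)$; on each such piece $h(u) = h(l+t) = h_{0,-l}|_{[0,1)}(t)$ (up to the support bookkeeping), so we need to understand $w_n\!\bigl(2^{-j}(l+m+t)\bigr)$ as a function of $t\in[0,1)$.

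The heart of the argument is to peel apart the Walsh function $w_n\!\bigl(2^{-j}(l+m+t)\bigr)$. Here I would use the multiplicative/XOR property \eqref{eq:w_prop1}: writing the nonnegative integer $l+m$ (which lies in a suitable range because $\Supp(h_{j,m})\subset[0,1]$ forces $0 \le l+m < 2^j$ for the relevant $l$) and the fractional part $2^{-j}t$ appropriately, one separates $w_n\bigl(2^{-j}(l+m) \oplus 2^{-j}t\bigr) = w_n\bigl(2^{-j}(l+m)\bigr)\, w_n\bigl(2^{-j}t\bigr)$, being careful that $2^{-j}(l+m)$ and $2^{-j}t$ occupy disjoint "dyadic registers" so that ordinary addition agrees with $\oplus$. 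The first factor $w_n\bigl(2^{-j}(l+m)\bigr)$ is already in the desired form. For the second factor I would apply the scaling identity $w_n(2^{-j}x) = w_{\floor{n/2^j}}(x)$ from Lemma \ref{l:wal_prop} with $x = t$, giving $w_n(2^{-j}t) = w_{\floor{2^{-j}n}}(t)$. Assembling, $\int_0^1 h_{j,m}(x)w_n(x)\d x = 2^{-j/2}\sum_{l=a}^{b-1} w_n\bigl(2^{-j}(l+m)\bigr) \int_0^1 h_{0,-l}|_{[0,1)}(t)\, w_{\floor{2^{-j}n}}(t)\d t$, and the inner integral is exactly $\mathcal{W}h_{0,-l}|_{[0,1)}\bigl(\floor{2^{-j}n}\bigr)$, which is the claimed formula.

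I expect the main obstacle to be the careful justification of the factorisation step: verifying that $w_n\bigl(2^{-j}(l+m+t)\bigr)$ genuinely splits as $w_n\bigl(2^{-j}(l+m)\bigr)\,w_{\floor{2^{-j}n}}(t)$. This requires (i) checking that the carry-free condition holds, i.e.\ that $2^{-j}(l+m)$ has a finite binary expansion supported on the bits $2^{-1},\ldots,2^{-j}$ while $2^{-j}t$ is supported on bits $2^{-j-1},2^{-j-2},\ldots$, so that $2^{-j}(l+m+t) = 2^{-j}(l+m)\oplus 2^{-j}t$ with no interaction — this is precisely where the hypotheses $0 \le l+m < 2^j$ (a consequence of $\Supp(h_{j,m})\subset[0,1]$ together with $\Supp(h_{0,-l})$ meeting $[0,1)$) are used; and (ii) handling the non-uniqueness of dyadic expansions at dyadic rationals, which is a measure-zero set and hence irrelevant to the integral, but should be acknowledged. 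I would also note the harmless reindexing in the statement, where the summand is written with $w_n\bigl(\tfrac{l+m}{2^j}\bigr)$ matching $w_n\bigl(2^{-j}(l+m)\bigr)$ above, and confirm that only those $l$ for which $[l,l+1)$ actually contributes (equivalently $h_{0,-l}|_{[0,1)}\not\equiv 0$) matter, the rest contributing zero, so extending the sum to all $l\in\{a,\ldots,b-1\}$ is legitimate. Everything else is routine change-of-variables bookkeeping.
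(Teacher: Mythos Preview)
Your proposal is correct and follows essentially the same route as the paper's proof: both split the integral into unit pieces indexed by $l=a,\ldots,b-1$, use the carry-free identity $2^{-j}(l+m)+2^{-j}t = 2^{-j}(l+m)\oplus 2^{-j}t$ (the paper records this as a separate displayed equation before the main calculation), and then apply the multiplicative property \eqref{eq:w_prop1} together with the scaling identity $w_n(2^{-j}t)=w_{\floor{n/2^j}}(t)$ from Lemma~\ref{l:wal_prop}. The only cosmetic difference is the order of operations: the paper first splits the $x$-integral and then changes variables piecewise, whereas you change variables globally to $u=2^jx-m$ and then split; the substance is identical.
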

\begin{proof}
    First notice that by assumption we have that $\Supp(h_{j,m}) \subset [2^{-j}(a +m), 2^{-j}(b+m)] \subset [0,1]$. This implies that $b-a \leq 2^j$, and that $m \in \{-a, -a+1,\ldots, 2^{j} - b\} \subset \{0, \ldots, 2^j -1\}$, where have used the assumption $a\leq 0 < b$, in the final inclusion. Next notice that
\begin{equation}\label{eq:x_frac}
\begin{split}
   \frac{x}{2^j} + \frac{m}{2^{j}} &= \sum_{i=j}^{\infty} x^{(i-j+1)}2^{-i-1} + \sum_{i=1}^{j} m^{(i)}2^{-j-1+i} \\
    &= \sum_{i=j}^{\infty} x^{(i-j+1)}2^{-i-1} \oplus \sum_{i=1}^{j}m^{(i)}2^{-j-1+i}
     =\frac{x}{2^j} \oplus \frac{m}{2^{j}}. 
\end{split}
\end{equation}
Utilising  \eqref{eq:x_frac} and Lemma \ref{l:wal_prop}, now give 
{ \allowdisplaybreaks
\begin{align*}
    \ind{h_{j,m}, w_{n}} &= \int_{0}^{1} 2^{j/2}h(2^j x -m) w_{n}(x) \d x \\
    &= \sum_{l=a}^{b-1} \int_{2^{-j}(l+m)}^{2^{-j}(l+1+m)} 2^{j/2}h(2^jx-m)w_{n}(x) \d x \\
    &= \sum_{l=a}^{b-1} \int_{l+m}^{l+1+m} 2^{-j/2}h\left(x-m\right)w_{n}\(\frac{x}{2^j}\) \d x \\
    &= \sum_{l=a}^{b-1} \int_{0}^{1} 2^{-j/2}h\(x+l\)w_{n}\(\frac{x+l+m}{2^j}\) \d x \\
    &= \sum_{l=a}^{b-1} \int_{0}^{1} 2^{-j/2}h\(x+l\)w_{n}\(\frac{x}{2^{j}} \oplus \frac{l+m}{2^j}\) \d x \\
    &= 2^{-j/2} \sum_{l=a}^{b-1} w_{n}\( \frac{l+m}{2^{j}} \) 
                 \mathcal{W}h_{0,-l}\lvert_{[0,1)}\(\floor{2^{-j}n}\).
\end{align*} 
}
\end{proof}

\section{The one dimensional algorithm}
\label{sec:one_dim_alg}

Next, we describe an algorithm for computing a matrix-vector multiplication 
with the matrix
\begin{equation}
\label{eq:vec_mat_prod}
P_{N}UP_{M}= \begin{bmatrix} \indi{\phi_{j,0}^{\text{rep}}, w_0} & \cdots & \indi{\phi_{j,M-1}^{\text{rep}}, w_0} \\
    \vdots  & \ddots & \vdots \\
    \indi{\phi_{j,0}^{\text{rep}}, w_{N-1}} & \cdots & \indi{\phi_{j,M-1}^{\text{rep}}, w_{N-1}} \\
\end{bmatrix}
\end{equation}
and its adjoint, using $\mathcal{O}(N\log N)$ operations and without explicitly storing the matrix \eqref{eq:vec_mat_prod} in memory. 
Throughout, we let $M = 2^j$ and $N = 2^{j+q}$ where $j\geq J_0$ and $q > 0$ are integers. Other values of $M$ and $N$ can be considered by ultilizing appropriate zero padding. 
Below, we describe the algorithm stepwise by defining different operators, which we combine to achieve the desired matrix-vector multiplication. The complete algorithm is summarised in Algorithm \ref{alg:one_dim}. 

\begin{remark}[On the scaling between $N$ and $M$]\label{r:scalingMN}
In Table \ref{t:sing_val} we have computed the ratio between the largest and smallest singular value of the matrix $P_{N}UP_M$, for different wavelets and choices for $q$, both in one and two dimensions. We observe that in all cases, the matrix is well-conditioned for the simplest choice of $q=1$. This corresponds to $N=2M$ in one dimension and $N=4M$ in two dimensions. Moreover, since we know that the stable sampling rate for Walsh sampling and wavelet reconstruction is linear, we have that $N=\mathcal{O}(M)$, with a reasonable constant. 
\end{remark}

\begin{remark}[Applications to compressive sensing]
Note that a sparse representation of $f$ is needed for compressive sensing to achieve successful recovery. For this method it is, therefore,  better to represent an approximation to $f$ in the basis $C_{\psi,j}$, than the $B_{\phi,j}$ basis used above. Changing the basis can easily be achieved by using the matrix $P_N U P_M W^{-1}$, where $P_{N}UP_{M}$ is as above, and $W^{-1} \in \C^{M\times M}$ is the inverse discrete wavelet transform (IDWT). As $W^{-1}$ is a change of basis matrix from $C_{\psi,j}$ to $B_{\phi,j}$, this matrix will simulate the desired matrix if $C_{\psi,j}$ is the reconstruction basis. Furthermore, the cost of applying $W^{-1}$ is $\mathcal{O}(M)$, using the cascade algorithm. This means that the overall cost of the matrix-vector multiplication does not grow by applying this change-of-basis.
\end{remark}

\begin{remark}[Haar wavelet reconstruciton]
For $N=M=2^j$, the Haar wavelet basis and Walsh sampling basis, span the same space. For the Haar reconstruction basis, there is, therefore, no benefit of applying generalised sampling or the PBDW-method for reconstruction. Compressive sensing, on the other hand, can be applied since it allows for reconstruction of $M$ wavelet coefficients from $m < M$ samples, under the assumption of sparsity. Since many natural images are sparse in the Haar wavelet basis, this approach is widely studied, see e.g. \cite{7446327, 9088137, Terhaar18}. For Walsh sampling and Haar wavelet reconstruction using the basis $C_{\psi, j}$, the truncated change-of-basis matrix $P_{M}UP_M = HW^{-1}$, where $W^{-1}$ is the Haar IDWT matrix, and $H$ is the Hadamard matrix. This matrix can be computed using fast transforms with the FWHT and DWT algorithms. Below, we do, therefore, not consider Haar wavelet reconstruction.  
\end{remark}

\begin{table}
\begin{center}
 \begin{\textsizefig}
    \begin{tabular}{@{}c@{}}
    \textbf{The value of } $\boldsymbol{\mu(\mathcal{R}_{M},\mathcal{S}_N) = 1/\cos(\omega (\mathcal{R}_{M},\mathcal{S}_N)) = 1 /\sigma_M(P_NUP_M)}$ \\  
    
    \begin{tabular}{@{}>{\centering}m{0.49\textwidth}>{\centering\arraybackslash}m{0.49\textwidth}@{}}
\textbf{One dimension} & \textbf{Two dimensions}\\
$M=2^7$, $N=2^{7+q}$, $L^2([0,1])$ & $M=2^{2\cdot 5}$, $N=2^{2(5+q)}$, $L^2([0,1]^2)$ \\
        \begin{tabular}{@{}>{\centering}m{0.11\textwidth}>{\centering}m{0.07\textwidth}>{\centering}m{0.07\textwidth}>{\centering}m{0.07\textwidth}>{\centering\arraybackslash}m{0.07\textwidth}@{}}
            \toprule
            \textbf{Wavelet} & $\boldsymbol{q=1}$  & $\boldsymbol{q=2}$  & $\boldsymbol{q=3}$  & $\boldsymbol{q=4}$ \\
            \midrule 
DB2 &  1.200 &  1.050 &  1.014 &  1.004 \\ 
DB3 &  2.610 &  1.135 &  1.028 &  1.006 \\ 
DB4 &  1.251 &  1.068 &  1.023 &  1.007 \\ 
DB5 &  1.392 &  1.109 &  1.025 &  1.011 \\ 
DB6 &  6.499 &  1.137 &  1.033 &  1.016 \\ 
sym2 &  1.200 &  1.050 &  1.014 &  1.004 \\ 
sym3 &  2.610 &  1.135 &  1.028 &  1.006 \\ 
sym4 &  1.188 &  1.037 &  1.008 &  1.003 \\ 
sym5 &  1.179 &  1.042 &  1.013 &  1.005 \\ 
sym6 &  1.300 &  1.059 &  1.015 &  1.005 \\ 
            \bottomrule 
        \end{tabular}
&       
        \begin{tabular}{@{}>{\centering}m{0.11\textwidth}>{\centering}m{0.07\textwidth}>{\centering}m{0.07\textwidth}>{\centering}m{0.07\textwidth}>{\centering\arraybackslash}m{0.07\textwidth}@{}}
            \toprule
            \textbf{Wavelet} & $\boldsymbol{q=1}$  & $\boldsymbol{q=2}$  & $\boldsymbol{q=3}$  & $\boldsymbol{q=4}$ \\
            \midrule 
DB2 &  1.439 &  1.102 &  1.028 &  1.008 \\ 
DB3 &  6.814 &  1.289 &  1.057 &  1.013 \\ 
DB4 &  1.565 &  1.141 &  1.047 &  1.013 \\ 
DB5 &  1.937 &  1.230 &  1.050 &  1.022 \\ 
DB6 & 42.233 &  1.292 &  1.068 &  1.032 \\ 
sym2 &  1.439 &  1.102 &  1.028 &  1.008 \\ 
sym3 &  6.814 &  1.289 &  1.057 &  1.013 \\ 
sym4 &  1.412 &  1.075 &  1.016 &  1.006 \\ 
sym5 &  1.389 &  1.085 &  1.026 &  1.009 \\ 
sym6 &  1.690 &  1.121 &  1.029 &  1.009 \\ 
            
            \bottomrule 
        \end{tabular}
\\
    \end{tabular}
    \end{tabular}
    \end{\textsizefig}
    \end{center}
\vspace{1mm}
\caption{\label{t:sing_val}
We compute the fraction $1 / \sigma_M(P_NUP_M) = \mu(\mathcal{R}_M,\mathcal{S}_N)$, for the matrix $P_{N}UP_{M}$, where $U$ is the change-of-basis matrix between a Walsh sampling basis and an orthonormal wavelet basis with vanishing moments preserving boundary wavelets. We consider both one and two-dimensional bases. We see that in all the considered cases, the smallest singular value is reasonably close to 1, indicating good conditioning of the matrix. Here DB$X$ and sym$X$, refer to a Daubechies or symlet wavelet, respectively, with $X$ vanishing moments.} 
\vspace{-6mm}
\end{table}

\subsection{The forward operation}
\label{subsec:forward}
The wavelet basis $B_{\phi,j}$ with $\nu > 1$ vanishing moments, consists of three types of wavelets, the left boundary corrected wavelets, interior wavelets and the right boundary corrected wavelets. The matrix-vector multiplication $P_{N} U P_M\xi$ for $\xi \in \C^{M}$ is, therefore, naturally divided into the three sums
\begin{equation} \label{eq:sum_key_ind}
    \sum_{m=0}^{\nu-1} \indi{\phi_{j,m}^{\text{rep}}, w_n}\xi_{m} +  
    \sum_{m=\nu}^{M-\nu-1} \indi{\phi_{j,m}, w_n}\xi_{m} +  
    \sum_{m=M-\nu}^{M-1} \indi{\phi_{j,m}^{\text{rep}}, w_n}\xi_{m}  
\end{equation}
for each $0\leq n < N$. In this subsection we foucs on how to speed up the computations of the middle summand, as a naive implementation would require $\mathcal{O}(MN)$ operations.  Throughout we take $\nu$ to be some small fixed number, usually in the range \{2, \ldots, 8\}, and we omitt the dependence on $\nu$, whenever we summarize the computatinal cost of the algorithm.  The first and thrid summand require $\mathcal{O}(N\nu) = \mathcal{O}(N)$ operations each, and their dependence is therefore  independent of $M$. We consider the edge scaling functions in \S\ref{subsec:edge}. 

 We start by applying Lemma \ref{lem:fjk} to the middle summand in \eqref{eq:sum_key_ind}. This gives
{ \allowdisplaybreaks
\begin{align*}
        \sum_{m=\nu}^{M-\nu-1} \indi{\phi_{j,m}, w_n} \xi_m  
        &= 2^{-j/2}\sum_{m=\nu}^{M-\nu-1}   
           \sum_{l=-\nu+1}^{\nu-1} \mathcal{W}\phi_{0,-l}\lvert_{[0,1)} (\floor{2^{-j}n}) 
           w_{n}\(\tfrac{l+m}{2^{j}}\) \xi_{m}\\
        &=  2^{-j/2} \sum_{l=-\nu+1}^{\nu-1} \mathcal{W}{\phi}_{0,-l}\lvert_{[0,1)} (\floor{2^{-j}n})
            \sum_{m=\nu}^{M-\nu-1} w_{n}\(\tfrac{l+m}{2^{j}}\) \xi_{m},  
\end{align*}   
}
Recall that $M=2^j$ and $N=2^{j+q}$, and define the linear operator $H_{l} \colon \R^{M} \to \R^N$ by
\begin{equation*}\label{eq:H_l}
H_{l}(\xi) =  \left[ \sum_{m=\nu}^{M-\nu-1}  w_{n}\(\frac{2^q(l+m)}{N}\) \xi_{m}\right]_{n=0}^{N-1}, \quad \xi \in \R^{M},
\end{equation*}
and the linear operator 
$D_l\colon \R^{N} \to \R^{N}$ by 
\begin{equation*}
    D_{l}(\alpha) = \left[ 2^{-j/2}\mathcal{W}\phi_{0,-l}\lvert_{[0,1)}
                       (\floor{2^{-j}n})\alpha_{n} \right]_{n=0}^{N-1}, \quad \alpha \in \R^{N}.
\end{equation*}
Combining these opertors, we can write the middle sum in \eqref{eq:sum_key_ind} as 
\begin{equation}
\label{eq:sum_operators}
\left[ \sum_{m=\nu}^{M-\nu-1} \indi{\phi_{j,m}, w_n}\xi_{m} \right]_{n=0}^{N-1} =
\sum_{l=-\nu+1}^{\nu-1} D_{l}(H_{l}(\xi)).
\end{equation}

Note that $H_l$ can be implemented by embedding  $\xi \in \R^M$ in a zero-padded vector of length $N$, and apply an $N\times N$ fast Walsh-Hadamard transform. Thus, evaluatning $H_{l}$ can be done in $\mathcal{O}(N \log N)$ operations. Also notice that the coefficients  $\mathcal{W}\phi_{0,-l}\lvert_{[0,1)}(\floor{2^{-j}n})$ are independent of the input, and can be computed a priori. This reduced the cost of evaluating $D_{l}$ to at most $\mathcal{O}(N)$ operations.
The cost of computing \eqref{eq:sum_operators} is, therefore, $\mathcal{O}(N \log N)$.

Also note that $n < N=2^{j+q}$, and implying that $\floor{2^{-j}n} \leq 2^{q}-1$. This means that for each $l$ we  only compute $\mathcal{W}\phi_{0,-l}(s)\vert_{[0,1)}$ for $s=0,\ldots,2^{q}-1$. Furthermore, from \S\ref{s:sub_ang} we know that for a fixed $\gamma > 1$ the stable sampling rate scales linearly. Hence for fixed $q$ we may vary $j$ without affecting the stable sampling rate. This implies that we only need to precompute these coefficients for some $q>q_{\gamma'}$ where $\gamma'>1$ is the smallest stable sampling rate of interest. Moreover, from Table \ref{t:sing_val} we see that even the simplest choice of $q=1$, results in $1 < \gamma < 2$, in many  cases.

\subsection{The adjoint operation}
Next we consider the matrix-vector multiplication $P_{M}U^*P_{N}\alpha$ for $\alpha \in \C^{N}$. Since the computational burder is on the $M-2\nu$ middle columns, we once more foucs on these and prostpone the edge wavelet functions, until \S\ref{subsec:edge}. That is, for $m = \nu, \ldots, M-\nu-1$ we can write the matrix-vector product as 
\begin{equation}
    \begin{split}\label{eq:mid_adj}
    \sum_{n=0}^{N-1} \ind{w_{n}, \phi_{j,m}} \alpha_n 
    &= \frac{1}{\sqrt{2^j}}\sum_{n=0}^{N-1}  
    \sum_{l=-\nu+1}^{\nu-1} w_n\( \frac{l+m}{2^j}\)
    \mathcal{W}\phi_{0,-l}\lvert_{[0,1)}\(\floor{\frac{n}{2^j}}\) \alpha_n \\ 
    &= \frac{1}{\sqrt{2^j}}\sum_{l=-\nu+1}^{\nu-1} \sum_{n=0}^{N-1} w_{2^q(l+m)}\(
    \frac{n}{N} \) \mathcal{W}\phi_{0,-l}\lvert_{[0,1)}
    \(\floor{\frac{n}{2^j}}\) \alpha_n 
    \end{split}
\end{equation}
by utilizing Lemma \ref{lem:fjk} and \eqref{eq:wal_shift2}.
Next define the operator $B_{l}\colon \R^{N} \to \R^{M}$ as 
\begin{equation*}
    B_{l}(s) = \begin{cases}  
               \sum_{n=0}^{N-1} w_{2^{q}(l+m)}(n/N) s_{n} & \text{for } m = \nu, \ldots, M-\nu-1\\
               0 & \text{for } m \in \{0,\ldots,\nu-1\}\cup\{M-\nu, \ldots, M-1\}
           \end{cases}
\end{equation*}
for $s \in \R^{N}$, and observe that $B_{l} = H_{l}^*$. Thus, from  
Equation \eqref{eq:mid_adj} we now have  
\[
\left[ \sum_{n=0}^{N_1} \ind{w_{n}, \phi_{j,m}}\alpha_{n}\right]_{m=\nu}^{M-\nu-1} 
= \sum_{l=-\nu+1}^{\nu-1} B_{l}(D_{l}(\alpha)).
\]
Finally, observe that $B_l$ can be computed in $\mathcal{O}(N\log N)$ operations by applying a fast Walsh Hadamard transform of dimension $N$ and selecting the appropriate output from this transform. Since the cost of applying $D_l$ is $\mathcal{O}(N)$, the total cost of computing the output from the middle rows are of order $\mathcal{O}(N\log N)$.

\subsection{The edge operations}
\label{subsec:edge}
We now turn to the edge functions and consider the two boundary extensions given by the periodic and vanishing moments preserving boundary wavelets. This gives us four different edge inner products, one for each edge and boundary extension. 

Consider the forward operation. According to \eqref{eq:sum_key_ind} we can write the sum of the $\nu$ first and $\nu$ last columns as
\begin{equation}
    \label{eq:forw_bd}
    \left[ \sum_{m=0}^{\nu-1} \indi{\phi_{j,m}^{\text{rep}}, w_n}\xi_{m} \right]_{n=0}^{N-1}  
    \quad\text{ and } \quad 
    \left[ \sum_{m=M-\nu}^{M-1} \indi{\phi_{j,m}^{\text{rep}}, w_n}\xi_{m} \right]_{n=0}^{N-1},
\end{equation}
respectively, for $\xi \in \R^{M}$. Likewise for the adjoint operation we can write the $\nu$ first and $\nu$ last rows as 
\begin{equation}
    \label{eq:adjoint_bd}
\left[ \sum_{n=0}^{N-1} \indi{w_n, \phi_{j,m}^{\text{rep}}}\alpha_{n} \right]_{m=0}^{\nu-1}
\quad\text{ and } \quad 
\left[ \sum_{n=0}^{N-1} \indi{w_n, \phi_{j,m}^{\text{rep}}}\alpha_{n} \right]_{m=M-\nu}^{M-1},
\end{equation}
respectively, for $\alpha \in \R^{N}$.

At the edges, we could, -- potentially -- compute the inner products $\indi{\phi_{j,m}^{\text{rep}}, w_{n}}$ a priori and store the result as dense matrices. A challenge with this approach, is that we need to compute and store the inner products for every possible combination of $j$, $m$ and $n$. This is infeasible in general, and would only allows us to do computations for certain dimensions. However, by applying Lemma \ref{lem:fjk} once more, we can
disentangle $j,m,n$ from the integral computation, so that we only need to compute
$\mathcal{W}\phi_{0,-l}^{\text{rep}}\vert_{[0,1]}(s)$ for $s \in \{0,\ldots, 2^{q}-1\}$. In the next proposition we do just this. Note that we use the convention that if $b < a$ and we write $\sum_{l=a}^{b} (\cdots)$, then this should be interpreted as zero. 
\begin{proposition}
    \label{prop:bd_coeff}
    Let $\phi$ be a scaling function, whose wavelet
has $\nu > 1$ vanishing moments. Let $M = 2^j$ and $N = 2^{j+q}$ for positive integers $j \geq \ceil{\log_2 (2\nu)}$ and $q>0$.  Let $n \in \{0,\ldots, N-1\}$.  Then for $m=0,\ldots, \nu-1$, 
    \begin{align*}
    \ind{\phi_{j,m}^{\text{per}}, w_n} =&  \sum_{l=-\nu+1}^{-m -1} 2^{-j/2}w_n\(\frac{2^j+m+l}{2^j}\)
      \mathcal{W}\phi_{0,-l}\lvert_{[0,1)} \(\floor{\frac{n}{2^j}}\) \\
    &+\sum_{l=-m}^{\nu -1} 2^{-j/2}w_n\(\frac{m+l}{2^j}\)
      \mathcal{W}\phi_{0,-l}\lvert_{[0,1)} \(\floor{\frac{n}{2^j}}\) 
    \end{align*}
and
    \begin{align*}
         \ind{\phi_{j,m}^{\text{bd}}, w_n} &=
    \sum_{l=0}^{\nu-1+m} 
   2^{-j/2} w_n\(\frac{l}{2^{j}}\)  \mathcal{W}{\phi}_{m}^{\text{left}}(\cdot + l)\vert_{[0,1)}\(\floor{\frac{n}{2^j}}\).
    \end{align*}
    Furthermore, for $m = 2^{j}-\nu, \ldots, 2^{j}-1$,  
{ \allowdisplaybreaks
    \begin{align*}
    \ind{\phi_{j,m}^{\text{per}}, w_n} =&  
     \sum_{l=-\nu+1}^{2^j-m-1} 2^{-j/2} w_n \( \frac{l+m}{2^j} \) 
    \mathcal{W}\phi_{0,-l}\lvert_{[0,1)}\(\floor{\frac{n}{2^j}}\) \\ 
    &+\sum_{l=2^{j}-m}^{\nu-1} 2^{-j/2} w_n\(\frac{l+m-2^j}{2^j}\)
    \mathcal{W}\phi_{0,-l}\lvert_{[0,1)} \(\floor{\frac{n}{2^j}}\),
    \end{align*}
}
and 
    \begin{align*}
    \ind{\phi_{j,m}^{\text{bd}}, w_n} &= \sum_{l=m-2^j - \nu+1}^{-1} 2^{j/2}w_{n} 
\left(\frac{l+2^{j}}{2^j}\right) \mathcal{W}{\phi}_{2^{j} -1 -m}^{\text{right}}(\cdot + l)\vert_{[0,1)}\(\floor{\frac{n}{2^j}}\).
     \end{align*}
\end{proposition}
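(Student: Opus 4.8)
The plan is to derive all four formulas from Lemma~\ref{lem:fjk} (or, where that lemma does not literally apply, from a direct repetition of its proof). The entire content of the proof of Lemma~\ref{lem:fjk} is that the dyadic-digit identity \eqref{eq:x_frac} is only ever invoked on dyadic blocks $[2^{-j}k,2^{-j}(k+1))$ with $0\le k\le 2^j-1$, i.e.\ on blocks contained in $[0,1)$. So the single ingredient I will use repeatedly is the following atom: if $0\le k\le 2^j-1$ and $g\in L^2([0,1])$ is supported on $[2^{-j}k,2^{-j}(k+1)]$ with $g(x)=2^{j/2}\tilde g(2^j x-k)$, then substituting $x=2^{-j}(k+y)$ and arguing as in \eqref{eq:x_frac} together with \eqref{eq:w_prop1} and the third identity of Lemma~\ref{l:wal_prop} gives
\[
  \int_{0}^{1} g(x)\,w_n(x)\d x
  = 2^{-j/2}\,w_n\!\(\tfrac{k}{2^j}\)\,\mathcal{W}\tilde g\(\floor{2^{-j}n}\).
\]
Here it is essential that $0\le k<2^j$: then the dyadic digits of $k/2^j$ sit in positions $1,\dots,j$ and those of $2^{-j}y$ in positions $>j$, so carrying never occurs and ordinary addition equals $\oplus$.

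\textbf{The vanishing moments preserving case.} For $m=0,\dots,\nu-1$ the function $\phi_{j,m}^{\text{bd}}(x)=2^{j/2}\phi_{m}^{\text{left}}(2^{j}x)$ is exactly $h_{j,0}$ with $h=\phi_{m}^{\text{left}}$; since $\Supp(\phi_m^{\text{left}})=[0,\nu+m]$ and $\nu+m\le 2\nu-1<2^j$, the support of $h_{j,0}$ lies in $[0,1]$, so Lemma~\ref{lem:fjk} applies verbatim with $a=0$, $b=\nu+m$, and $m$ there set to $0$. Recognising $h_{0,-l}(\cdot)=\phi_m^{\text{left}}(\cdot+l)$ gives the claimed formula. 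For $m=2^j-\nu,\dots,2^j-1$ set $p=2^j-1-m\in\{0,\dots,\nu-1\}$ and write $\phi_{j,m}^{\text{bd}}(x)=2^{j/2}\phi_{p}^{\text{right}}(2^j x-2^j)=2^{j/2}h(2^j x-(2^j-1))$ with $h(u)=\phi_p^{\text{right}}(u-1)$; then $\Supp(h)=[-p-\nu+1,\,1]$ has left endpoint $\le 0$ and right endpoint $1>0$, and $h_{j,2^j-1}=\phi_{j,m}^{\text{bd}}$ is supported in $[0,1]$ because $p+\nu\le 2\nu-1<2^j$. Lemma~\ref{lem:fjk} (with $m$ there equal to $2^j-1$) then produces a sum over $l\in\{-p-\nu+1,\dots,0\}$, which after the substitution $l\mapsto l-1$ becomes the stated sum over $l\in\{m-2^j-\nu+1,\dots,-1\}$ with weights $w_n((l+2^j)/2^j)$ and integrand $\mathcal{W}\phi_{2^j-1-m}^{\text{right}}(\cdot+l)\lvert_{[0,1)}$.

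\textbf{The periodic case.} Here the wrap-around translate of $\phi_{j,m}$ pokes outside $[0,1]$, so I apply the atom directly rather than Lemma~\ref{lem:fjk}. For $m=0,\dots,\nu-1$ one has on $[0,1)$ that $\phi_{j,m}^{\text{per}}=\phi_{j,m}\lvert_{[0,1)}+\phi_{j,2^j+m}\lvert_{[0,1)}$ with $\phi_{j,2^j+m}(x)=\phi_{j,m}(x-1)$. Splitting $\int_0^1\phi_{j,m}^{\text{per}}w_n\d x$ over dyadic blocks inside $[0,1)$, the term $\phi_{j,m}$ contributes exactly the blocks $k\in\{m-\nu+1,\dots,m+\nu-1\}$ with $k\ge 0$; on block $k$ it equals $2^{j/2}\phi_{0,-(k-m)}$, so the atom with $l:=k-m$ (hence $l\in\{-m,\dots,\nu-1\}$, $l+m\in\{0,\dots,2^j-1\}$) yields the second claimed sum, while the term $\phi_{j,2^j+m}$ contributes the blocks $k\in\{2^j+m-\nu+1,\dots,2^j+m+\nu-1\}$ with $k\le 2^j-1$; on block $k$ it equals $2^{j/2}\phi_{0,-(k-2^j-m)}$, so the atom with $l:=k-2^j-m$ (hence $l\in\{-\nu+1,\dots,-m-1\}$ and, since $0\le k<2^j$, $w_n(k/2^j)=w_n((2^j+m+l)/2^j)$) yields the first claimed sum; the two block ranges are disjoint, so adding them gives the identity. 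The right-edge case $m=2^j-\nu,\dots,2^j-1$ is the mirror image with $\phi_{j,m-2^j}(x)=\phi_{j,m}(x+1)$ in place of $\phi_{j,2^j+m}$: $\phi_{j,m}$ now contributes $k\in\{m-\nu+1,\dots,2^j-1\}$, giving the first sum via $l=k-m$, and $\phi_{j,m-2^j}$ contributes $k\in\{0,\dots,m+\nu-1-2^j\}$, giving the second sum via $l=k-m+2^j$ with $w_n(k/2^j)=w_n((l+m-2^j)/2^j)$.

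The argument is entirely bookkeeping; the only places requiring care are (i) determining, in each case, precisely which dyadic blocks of the (possibly out-of-range) translate of $\phi$ lie in $[0,1)$ and verifying that their indices $k$ satisfy $0\le k\le 2^j-1$ — this uses the standing assumption $2^j\ge 2\nu$, i.e.\ $j\ge\ceil{\log_2(2\nu)}$ — and (ii) converting the period shift $\pm 2^j$, respectively the support offset of $\phi_m^{\text{left}}$ or $\phi_{2^j-1-m}^{\text{right}}$, into the correct shifted Walsh argument and the correct summation range for $l$. Once the atom above is established, each of the four cases collapses to a one-line re-indexing, and one notes throughout that $\floor{n/2^j}=\floor{2^{-j}n}$ ranges over $\{0,\dots,2^q-1\}$ since $n<N=2^{j+q}$.
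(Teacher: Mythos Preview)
Your proof is correct and follows essentially the same approach as the paper's: both reduce all four cases to Lemma~\ref{lem:fjk}, with the right-edge cases handled by the shift $2^jx\mapsto 2^jx-(2^j-1)$ so that the lemma's support hypothesis $a\le 0<b$ is met. The only cosmetic difference is that for the periodic pieces you isolate a single-block ``atom'' and sum over dyadic blocks directly, whereas the paper first repackages each restricted translate (e.g.\ $\phi_{j,2^j+m}\lvert_{[0,1]}$) as a single $h_{j,m'}$ and then invokes Lemma~\ref{lem:fjk}; these are two phrasings of the same computation.
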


\begin{proof}
For an interval $I \subset \R$, let $\chi_{I}$ denote the characteristic funciton on $I$. The result follows by using Lemma \ref{lem:fjk} on all the considered inner products. For all functions intersecting the left edge this is trivial, the result follows by recalling that $\Supp(\phi) = [-\nu+1, \nu]$ and $\Supp (\phi_{m}^{\text{left}}) = [0,\nu+m]$. The same can be said, about the functions  $\phi_{j,\nu-1}^{\text{per}} = \phi_{j,\nu-1}$, and $\phi_{j, 2^j-\nu}^{\text{per}} = \phi_{j,2^j-\nu}$, since these are interior functions. Applying Lemma \ref{lem:fjk} to the right edges require slighly more care, since it is assumed that the function under consideration is supported on an interval $[a,b]$, with $a,b \in \mathbb{N}$, and $b > 0$. On the right edges this can be achived by using the change of variable $y=2^jx - (2^j-1)$. We do not write out the details for all the considered functions, but demonstrate the idea on $\phi_{j,2^{j}+m}\vert_{[0,1]}$, for $m=0,\ldots, \nu-2$ (used in $\phi_{j,m}^{\text{per}}$). 
We start by noticing that $\chi_{[0,1]}(x)= \chi_{[-(2^j-1), 1]}(2^j x - (2^j-1))$. This means that
\[
    \phi_{j,2^j+m}\vert_{[0,1]}(x) = 2^{j/2} \phi(2^jx -(2^j-1) - (m+1)) \chi_{[-(2^j-1), 1]}(2^j x - (2^j-1)),
\]
where the function $\phi(\cdot-(m+1))\chi_{[-(2^j-1),1]}$ has support $[-\nu+1+(m+1), 1]$. Applying Lemma \ref{lem:fjk}, and using that
$
\mathcal{W}\phi_{0,-l}(\cdot -(m+1))\vert_{[0,1)}(s) 
= \mathcal{W}\phi_{0,-l+m+1}\vert_{[0,1)}(s)
$
gives the result.
\end{proof}

Given the inner products $\ind{\phi_{j,m}^{\text{rep}}, w_n}$ and $\ind{w_n, \phi_{j,m}^{\text{rep}}}$, the computational cost of \eqref{eq:forw_bd} and \eqref{eq:adjoint_bd}, is $\mathcal{O}(N)$. Furthermore, to compute these inner products we may use Proposition \ref{prop:bd_coeff} for each $n \in \{0,\ldots, N-1\}$. 

However, this can can be challenging since, evaluating the above sums, require the computation of $w_n(2^{-j}(m+l))$ for many different choices of $m, l$ and $n$, and -- to the best of the author's knowledge -- there are no software packages implementing the pointwise evaluation of Walsh functions.  Moreover, a naive implementation in C++ using Defenition \ref{def:walsh_func} is rather slow. To speed up this part of the code we use the relation between Walsh functions and Hadamard matrices, and use the FWHT algorithm to evaluate $w_n(2^{-j}(m+l))$ for all the relevant values of $m,l$ and $n$, simultaniusly. However, this raises the computational cost of the edge computations to $\mathcal{O}(N \log N)$.

\begin{algorithm}
\begin{algorithmic}[1]
\Procedure{The forward operation}{}
\State $\textbf{Input: } j,q \in \N. ~ M = 2^{j}, ~ N=2^{j+q} \text{ and } \xi \in \R^{M}$.
\State $\textbf{Output: } \alpha = P_{N}UP_{M}\xi \text{ where } P_{N}UP_M \text{ is given by \eqref{eq:vec_mat_prod}.} $ 
\State Compute vectors $\beta^{\text{left}} \in \R^{N}$ and $\beta^{\text{right}} \in\R^{N}$
from \eqref{eq:forw_bd}, using Prop.\ \ref{prop:bd_coeff}.
\State Compute the vector $\beta^{\text{mid}} = \sum_{l=-\nu+1}^{\nu-1} D_{l}H_{l}(\xi)$.
\State \textbf{return} $\alpha = \beta^{\text{left}} + \beta^{\text{mid}} + \beta^{\text{right}}$. 
\EndProcedure
\end{algorithmic}
\begin{algorithmic}[1]
\Procedure{The adjoint operation}{}
\State $\textbf{Input: } j,q \in \N. ~ M = 2^{j}, ~ N=2^{j+q} \text{ and } \alpha \in \R^{N}$.
\State $\textbf{Output: } \xi = P_{M}U^*P_{N}\alpha \text{ where } P_{N}UP_M \text{ is given by \eqref{eq:vec_mat_prod}.} $ 
\State Compute $\xi_{0}, \ldots, \xi_{\nu-1}$ and $\xi_{M-\nu}, \ldots \xi_{M-1}$ from \eqref{eq:adjoint_bd}, using Prop.\ \ref{prop:bd_coeff}.
\State Compute $\xi_{\nu}', \ldots, \xi_{M-\nu-1}'$ extracting elements from 
       $\xi' = \sum_{l=-\nu+1}^{\nu-1} B_{l}(D_{l}(\alpha))$.
\State \textbf{return} $\xi = [\xi_{0}, \ldots, \xi_{\nu-1}, \xi_{\nu}',
       \ldots, \xi_{M-\nu-1}', \xi_{M-\nu}, \ldots, \xi_{M-1}]$
\EndProcedure
\end{algorithmic}
\caption{\label{alg:one_dim}The one-dimensional forward and adjoint operation.}
\end{algorithm}

\section{Extension to two dimensions}
\label{sec:two_dim_alg}
We restrict our attention to $d=2$ dimensions, since it applies to any kind of imaging application. It is certainly possible to extend the algorithm to any $d$-dimensional tensor product space, though, it practical relevance seems limited.  We, therefore, let $\mathcal{H} = L^2([0,1]^2)$ and consider samples from the tensor product basis $\{w_{n_{1}}\otimes w_{n_2} : (n_1,n_2) \in \mathbb{N}^2\}$.
As for the one dimensional algorithm, we consider the case where the sampling and reconstruction spaces are dyadid cubes. That is, for $N=2^{j+q}$ and $M=2^j$ we let the sampling space $ \mathcal{S}_{N^2} = \{w_{n_1}\otimes w_{n_2}: 0\leq n_1, n_2 < N\} $
and the reconstion space 
$
\mathcal{R}_{M^2} = \left\{\phi_{j, m_1}\otimes \phi_{j,m_2}: 0 \leq m_1,m_2 < M \right\}$.

For a tensor $\xi \in \R^{M\times M}$, we can split the change-of-basis computation as 
\begin{equation}\label{eq:tensor}
\begin{split}
    \alpha_{n_1,n_2} &= \sum_{m_1=0}^{M-1}\sum_{m_2 =0}^{M-1} \xi_{m_1,m_2} \ind{\phi_{j,m_1}\otimes \phi_{j,m_2}, w_{n_1}\otimes w_{n_2}}  \\ 
&= \sum_{m_1=0}^{M-1} \ind{\phi_{j, m_1}, w_{n_1}} \sum_{m_2=0}^{M-1} \xi_{m_1,m_2} \ind{\phi_{j, m_2}, w_{n_2}}
\end{split}
\end{equation}
for each $n_1,n_2 \in \{0,\ldots, N-1\}$, so that it is a double sum of one-dimensional inner products. Thus, letting 
$G \in \mathbb{R}^{N\times M}$ denote the forward operator  we derived for 
the one dimensional case, and letting
$\eta \in \C^{M \times
N}$ have components \[\eta_{m_1, n_2} = \sum_{m_2=0}^{M-1} \xi_{m_1, m_2}
\ind{\phi_{j,m_2}, w_{n_2}} = G\left( \left[\xi_{m_1,m_2}\right]_{m_2=0}^{M-1} \right)\]
we see that the above computation simplifies to 
\[
\alpha_{n_1,n_2}  = \left(G \left(\left[ \eta_{m_1, n_2} \right]_{m_1=0}^{M-1}\right)\right)_{n_1}, \quad\text{for }n_1,n_2 \in \{0,\ldots,N-1\},
\]
or simply
$\alpha = G \xi G^*$,  for $\xi \in \R^{M\times M}$.

Now, since $G$ can be evaluated in $\mathcal{O}(N\log N)$ operations, we can compute $\alpha = G\xi G^*$ in $\mathcal{O} (2M N \log N)$ operations. Furthermore, since $N=2^{2q}M$, and $q\in \{1,2\}$ is a resonable choice, this is reduces to $\mathcal{O}(N^2 \log N^2)$, where $N^2$ is the dimension of the sampling space. 

By considering \eqref{eq:tensor}, it should be clear that we can do the same type of splitting also for the adjoint operation. We do not do the full derivation, but notice that as an intermediate step one would need to compute
\begin{equation*}
    \beta_{n_1, m_2} = \sum_{n_2=0}^{N-1} \alpha_{n_1, n_2} \ind{w_{n_2}, \phi_{j, m_2}} 
    = G^{*} \left(\left[ \alpha_{n_1, n_2} \right]_{n_2=0}^{N-1}\right) 
\end{equation*}
for $0\leq m_2 < M$. Applying the same transform in the row direction, leads to a transform which can be computed in $\mathcal{O}(N^2\log N^2)$ operations. The complete algorithm is summarized in Algorithm \ref{alg:two_dim}.  

\begin{algorithm}
\begin{algorithmic}[1]
\State Let $G \in \R^{N \times M}$ be the one dimensional truncated change-of-basis matrix \eqref{eq:vec_mat_prod}.
\Procedure{The forward operation}{}
\State $\textbf{Input: } j,q \in \N. ~ M = 2^{j}, ~ N=2^{j+q} \text{ and } \xi \in \R^{M \times M}$.
\State Compute $\alpha = G \xi G^*$.   
\State \textbf{return } $\alpha$.
\EndProcedure
\end{algorithmic}
\begin{algorithmic}[1]
\Procedure{The adjoint operation}{}
\State $\textbf{Input: } j,q \in \N. ~ M = 2^{j}, ~ N=2^{j+q} \text{ and } \alpha \in \R^{N\times N}$.
\State Compute $\xi = G^* \alpha G$.   
\State \textbf{return } $\xi$.
\EndProcedure
\end{algorithmic}
\caption{\label{alg:two_dim}The two dimensional forward and adjoint operation.}
\end{algorithm}

\section{Numerical examples}
\label{s:num_exp}
We conclude by demonstrating how the proposed fast transform be used by the three reconstruction methods presented in the introduction. The code for producing these figures can be found on the Github page. Throughout the section we let $\chi_{I}$ denote a step function on the set $I \subset \R^{d}$, $d=1,2$. 

\paragraph{Example 1} 
We compare the four reconstruction methods (1) truncated Walsh series, (2) generalised sampling, (3) the PBDW-method and (4) compressive sensing by acquiring $N=32$ Walsh samples from the function 
$
f(t) = \cos(2\pi t) \chi_{[0,1/2]}(t) + \tfrac{1}{2} t \sin(6\pi t) \chi_{(1/2,1]}(t).  
$
The resulting reconstructions can be seen in Figure \ref{fig:comp_methods}. The first three methods are linear reconstruction methods, and we acquire Walsh samples using the $N$ first Walsh functions. Compressive sensing (CS), on the other hand, is an example of a non-linear reconstruction method. For compressive sensing we, therefore, subsample $32$ samples from the first $256$ Walsh samples, using a variable density sampling scheme. For the GS and PBDW reconstructions, we use the first $M=16$ basis functions in the DB4 wavelet basis for reconstruct, whereas for the compressive sensing reconstruction use the $128$ first functions in this basis. In all cases, we use vanishing moments preserving boundary wavelets to minimise the artefacts at the boundaries. It is clear from the figure that the compressive sensing reconstruction causes the least artefacts and best reconstruction, despite some wiggles around the discontinuity at $t=1/2$. The truncated Walsh series cause the very characteristic blocky artefacts, whereas the generalised sampling method produces a smooth approximation to $f$. However, since we only use 16 wavelet functions, we obtain a very poor approximation around the discontinuity with generalised sampling. The PBDW method approximates $f$ better in the smooth areas on the right, using a large number of Walsh functions. Still, it produces severe artefacts around the discontinuity and at the top of the sine curve.  

\begin{figure}[tb]
    \centering
    \begin{\textsizefig}
    \setlength{\tabcolsep}{2pt}
    \begin{tabular}{@{}>{\centering}m{0.32\textwidth}>{\centering}m{0.32\textwidth}>{\centering\arraybackslash}m{0.32\textwidth}@{}}
    $f(t)$ & Compressive sensing (CS) & \\ 
     \includegraphics[width=\linewidth]{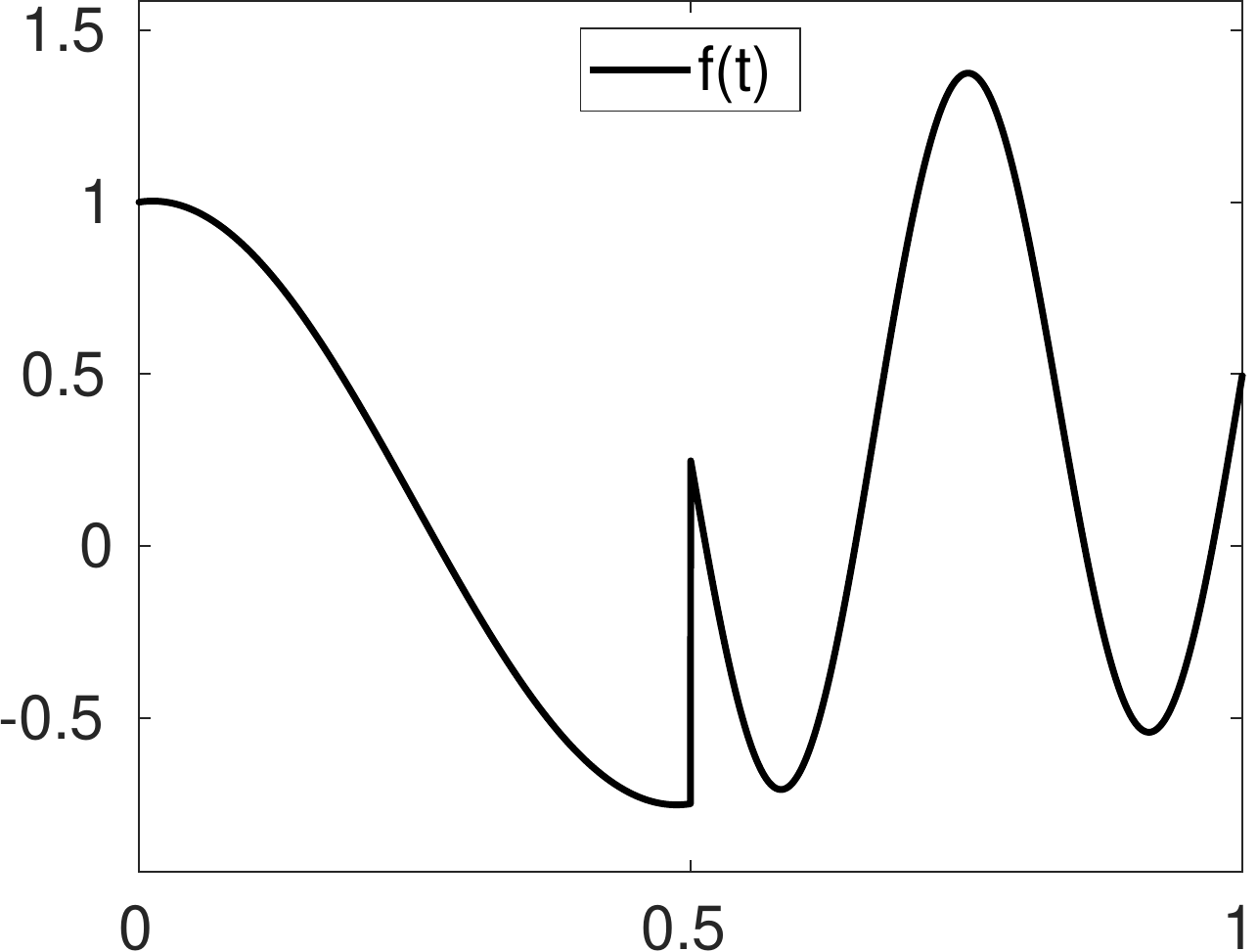} 
    &\includegraphics[width=\linewidth]{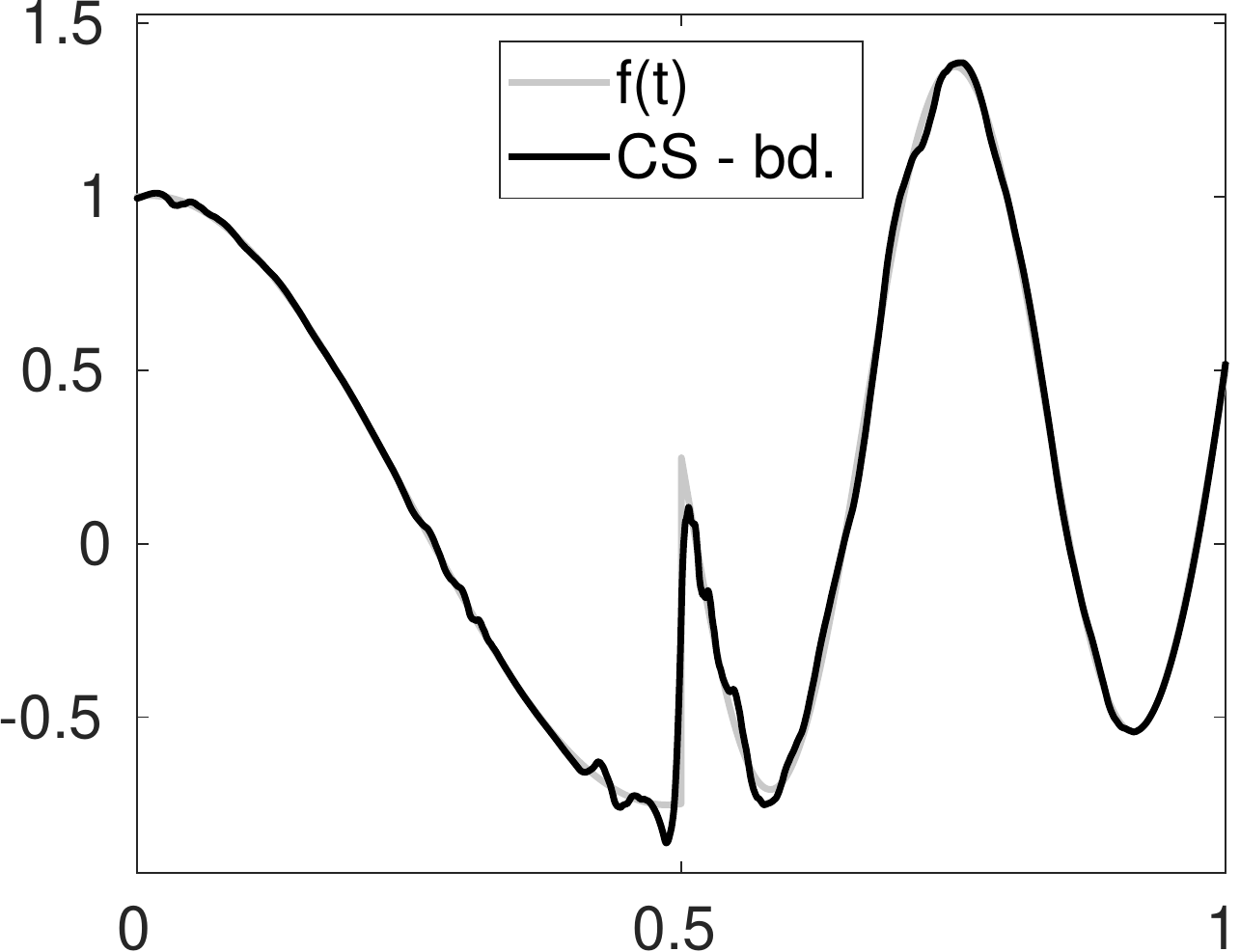}
    &
    \begin{tabular}[b]{@{}>{\centering\arraybackslash}m{0.32\textwidth}@{}}
         Sampling pattern linear methods \\ 
         \includegraphics[width=\linewidth]{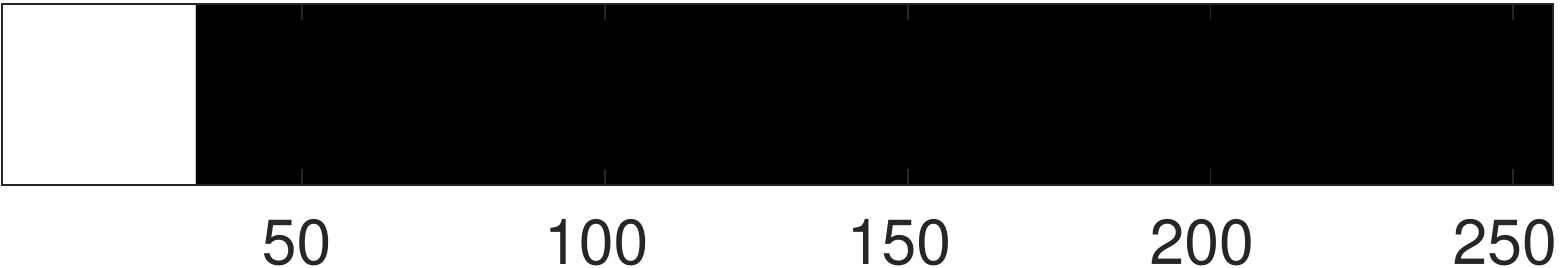} 
         Sampling pattern compressive sensing \\
        \includegraphics[width=\linewidth]{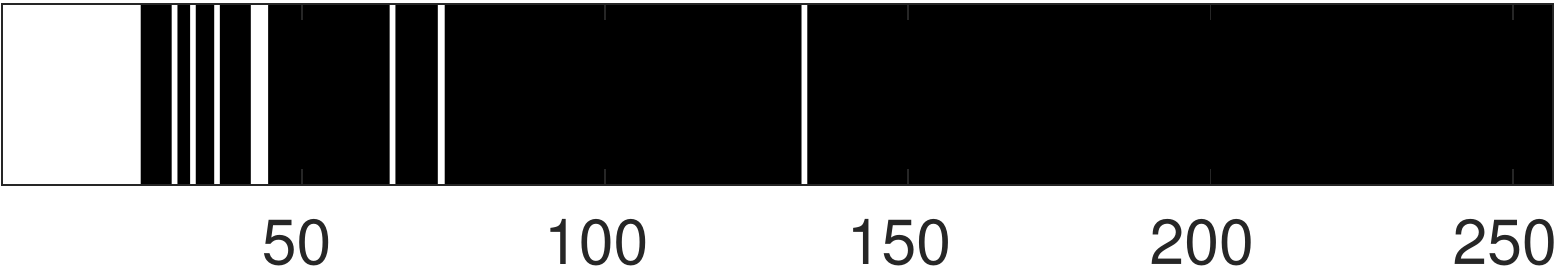} \\
     \end{tabular} \\
    Truncated Walsh (TW) &  Generalised sampling (GS) & PBDW-method\\ 
     \includegraphics[width=\linewidth]{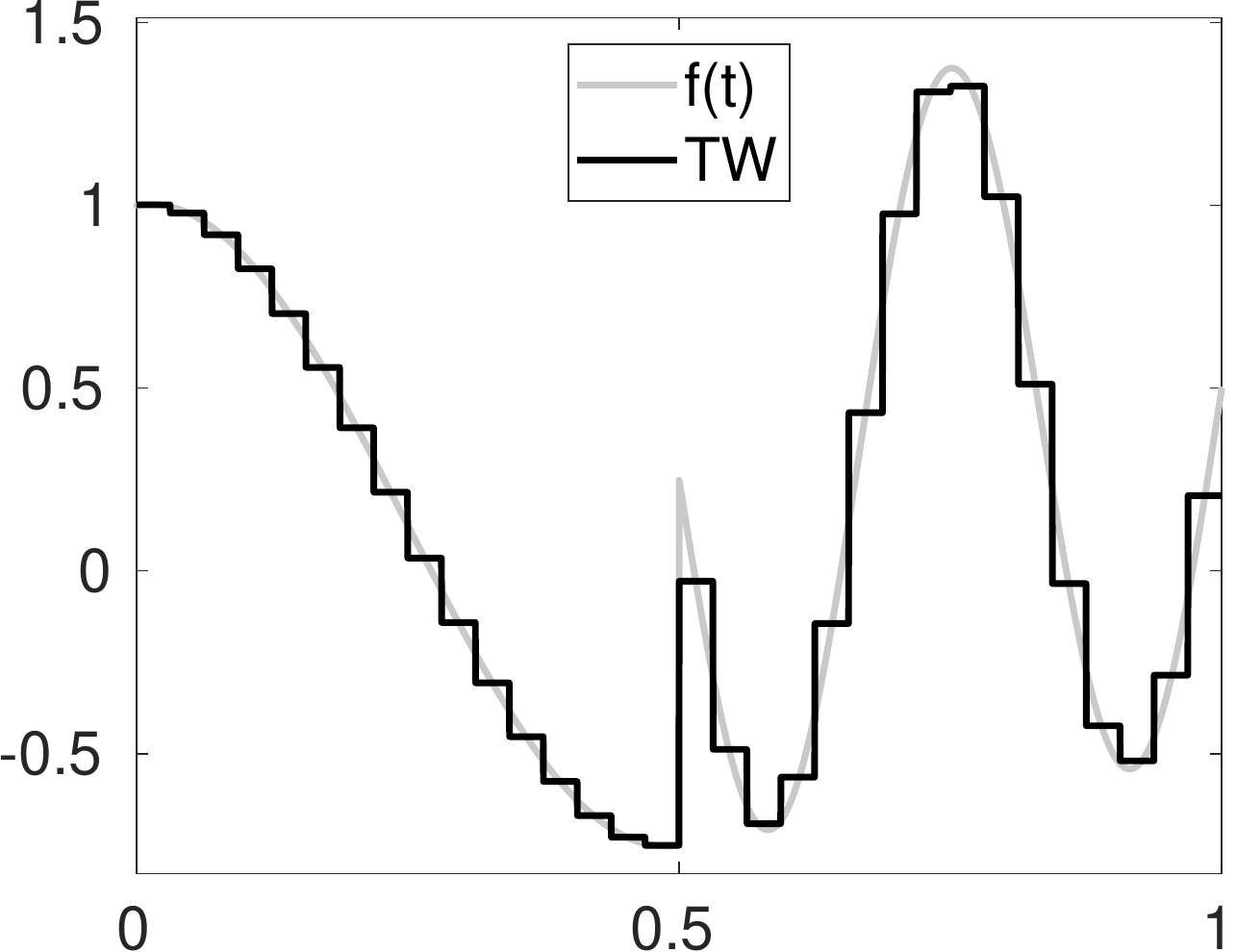} 
    &\includegraphics[width=\linewidth]{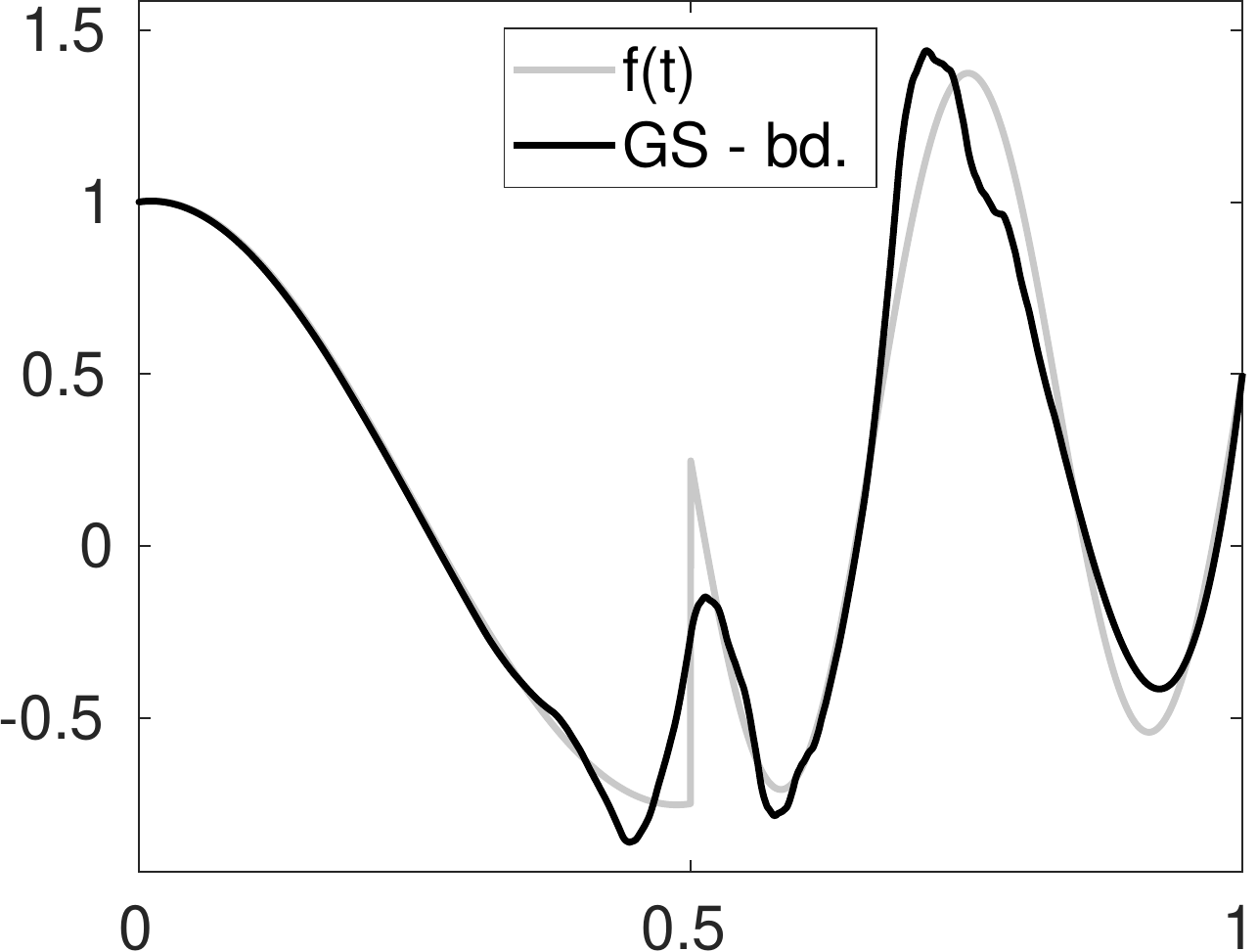} 
    &\includegraphics[width=\linewidth]{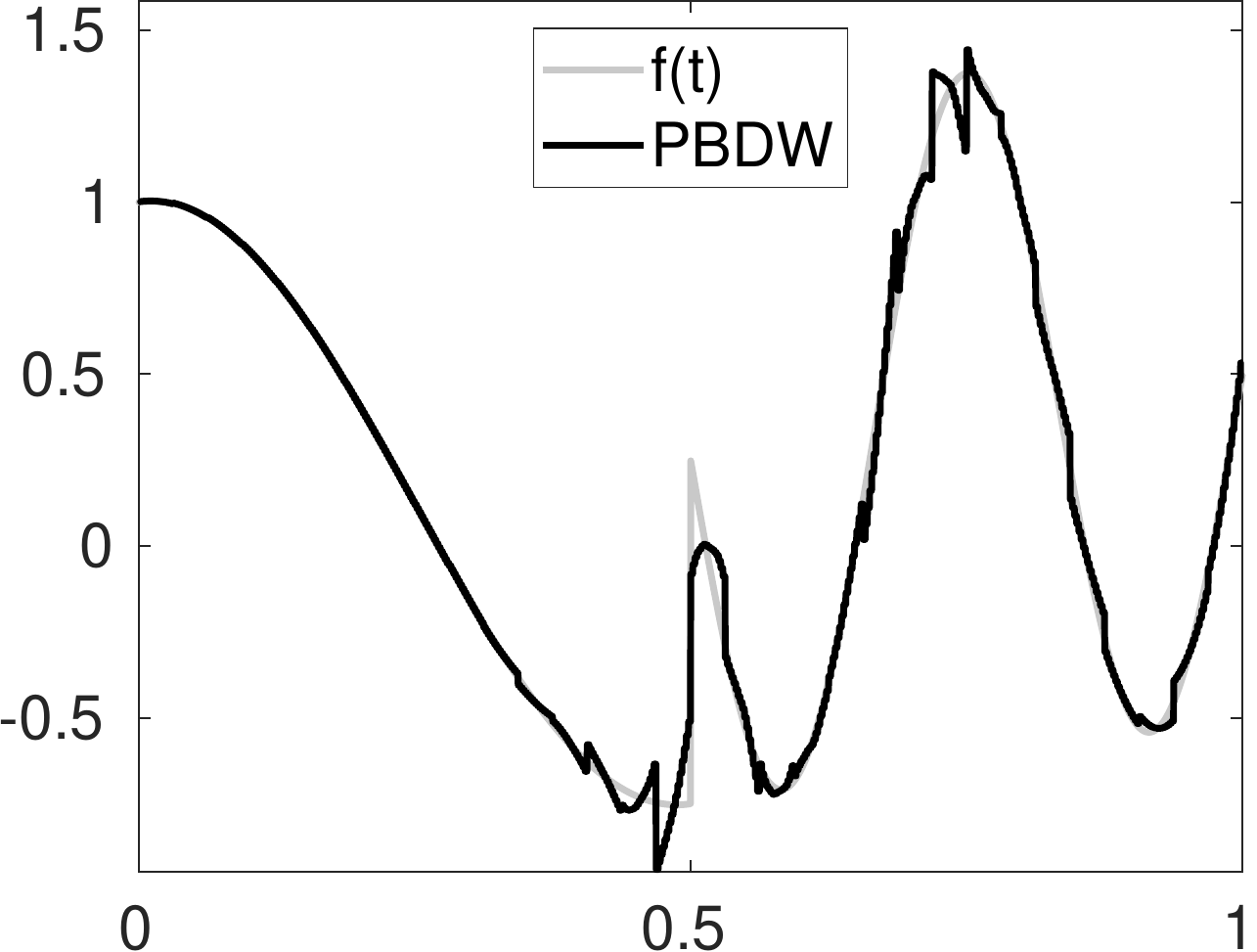} \\
    \end{tabular}
    \end{\textsizefig}
    \vspace{-3mm}
    \caption{\label{fig:comp_methods}
(\textbf{Comparison of reconstruction methods}).
We reconstruct the function $f = \cos(2\pi t) \chi_{[0,1/2]}(t) + \allowbreak \tfrac{1}{2} t \sin(6\pi t) \chi_{(1/2,1]}(t) $ from 32 Walsh samples using the methods: truncated Walsh series, generalised sampling, PBDW-method and compressive sensing. The first three of these methods are linear reconstruction methods, and the samples we acquire are shown in the upper right corner. Note that for the CS reconstruction, we use subsampled measurements (see upper right corner). %The sampling pattern is shown in the upper right corner.
}
%\vspace{-4mm}
\end{figure}

\paragraph{Example 2}
We explore how choosing the reconstruction space in relation to the function one would like to recover can improve the reconstruction quality. The reconstructions can be seen in Figure \ref{fig:GS_smoothness}. In this example  
    $f(t_1,t_2) = \allowbreak \cos\left(\tfrac{3}{2}\pi t_1\right) \sin \left( 3\pi t_2 \right) $
and we acquire $f$'s $32 \times 32 $ first Walsh samples. Using these samples, we compute a truncated Walsh series approximation to $f$, along with generalised sampling reconstructions with different wavelet smoothness. Note that the smoothness of the wavelet basis increases with $\nu$. Since $f$ is smooth we expect that the reconstruction improves with increasing values of $\nu$. In Figure \ref{fig:GS_smoothness}, we see this effect, as the reconstruction error decreases with increasing values of $\nu$.  

\begin{figure}[tb]
    
    \begin{tabular}{@{}l@{}}
    \begin{\textsizefig}
    \begin{tabular}{@{}>{\centering}m{0.23\textwidth}>{\centering}m{0.23\textwidth}>{\centering}m{0.23\textwidth}>{\centering\arraybackslash}m{0.23\textwidth}@{}}
      Truncated Walsh 
    & GS with DB2 
    & GS with DB4 
    & GS with DB6 \\
     \includegraphics[width=\linewidth]{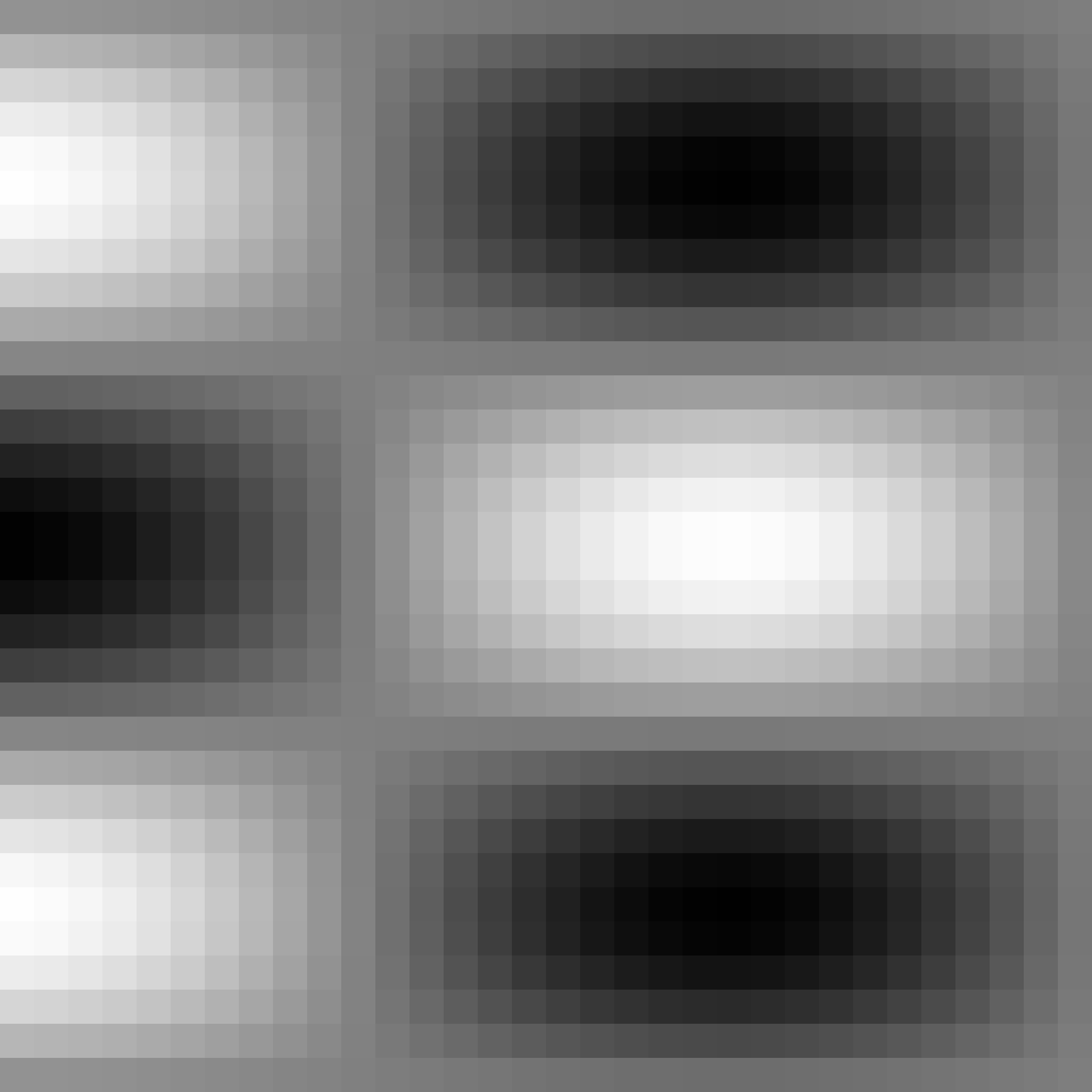}
    &\includegraphics[width=\linewidth]{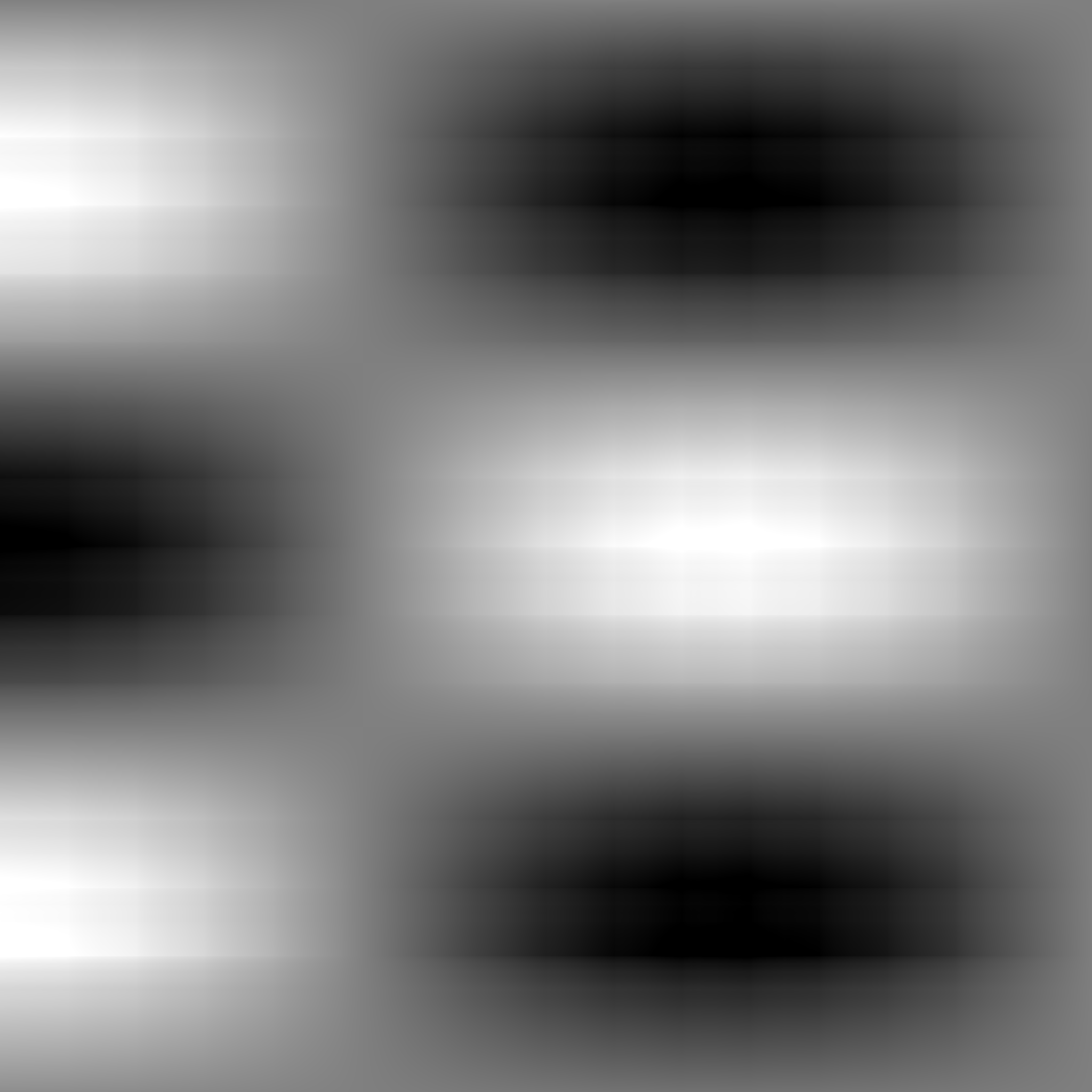} 
    &\includegraphics[width=\linewidth]{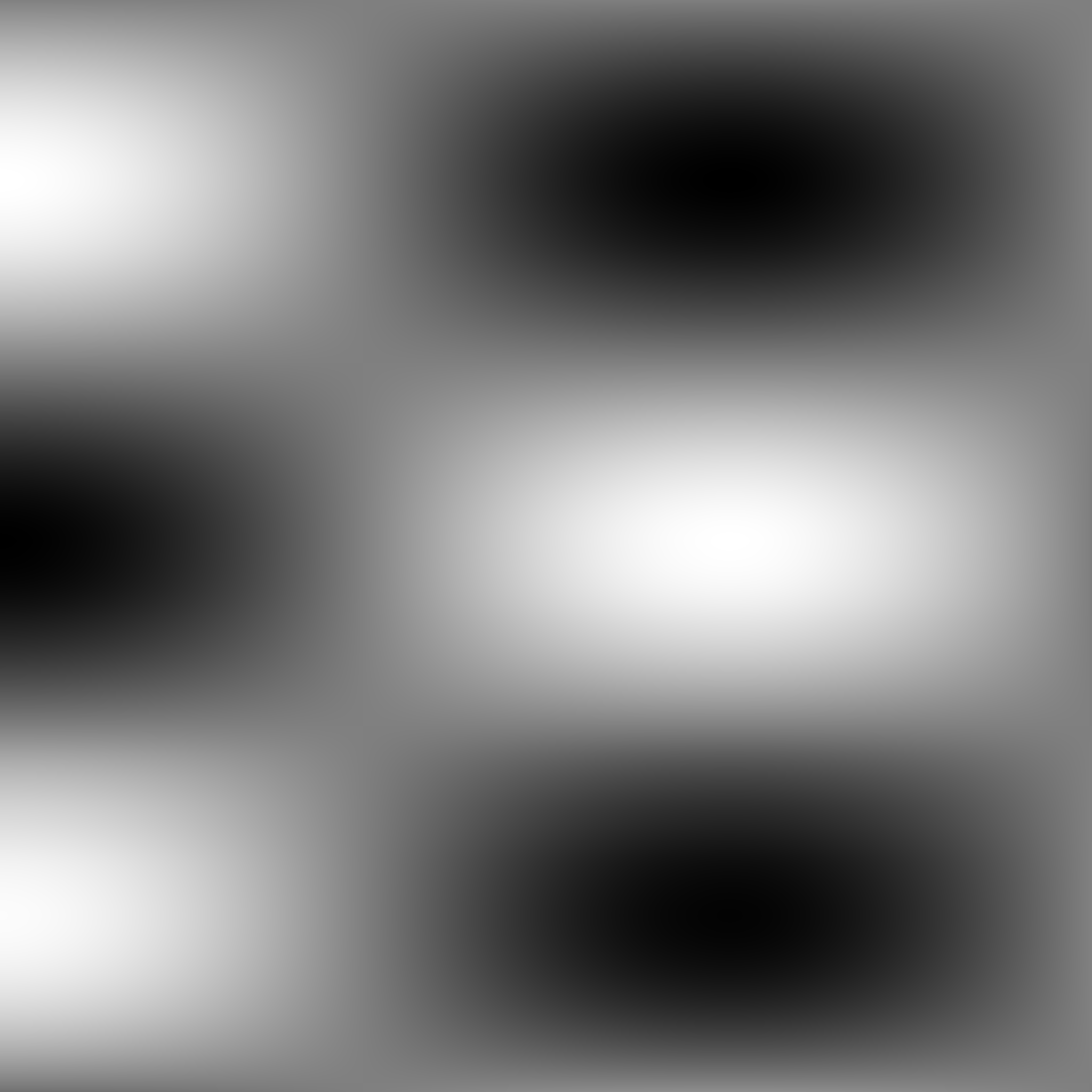} 
    &\includegraphics[width=\linewidth]{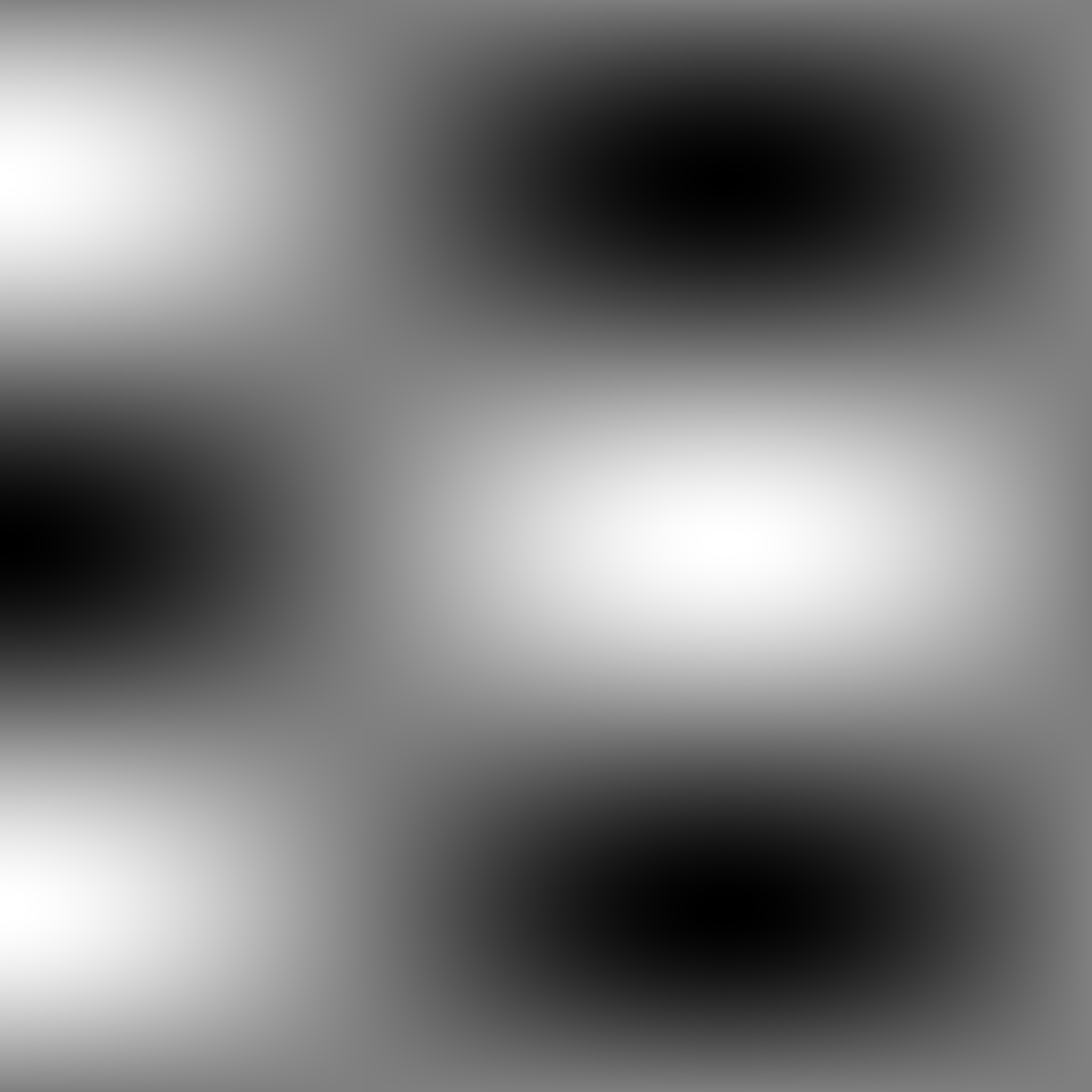}\\
      Error map, 
    & Error map, 
    & Error map, 
    & Error map, \\
      relative error $0.0950$
    & relative error $0.0518$
    & relative error $0.0290$
    & relative error $0.0215$ \\
     \includegraphics[width=\linewidth]{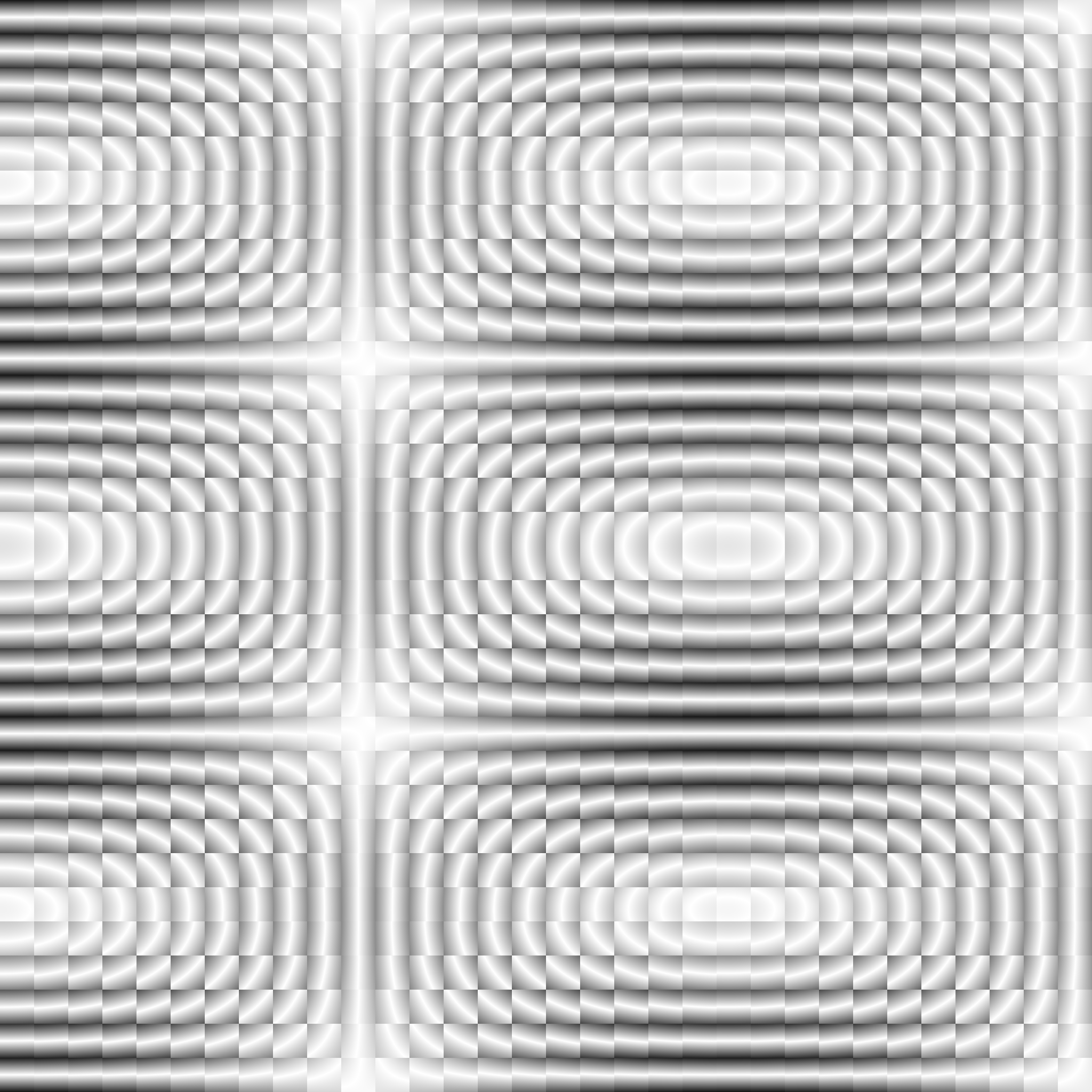}
    &\includegraphics[width=\linewidth]{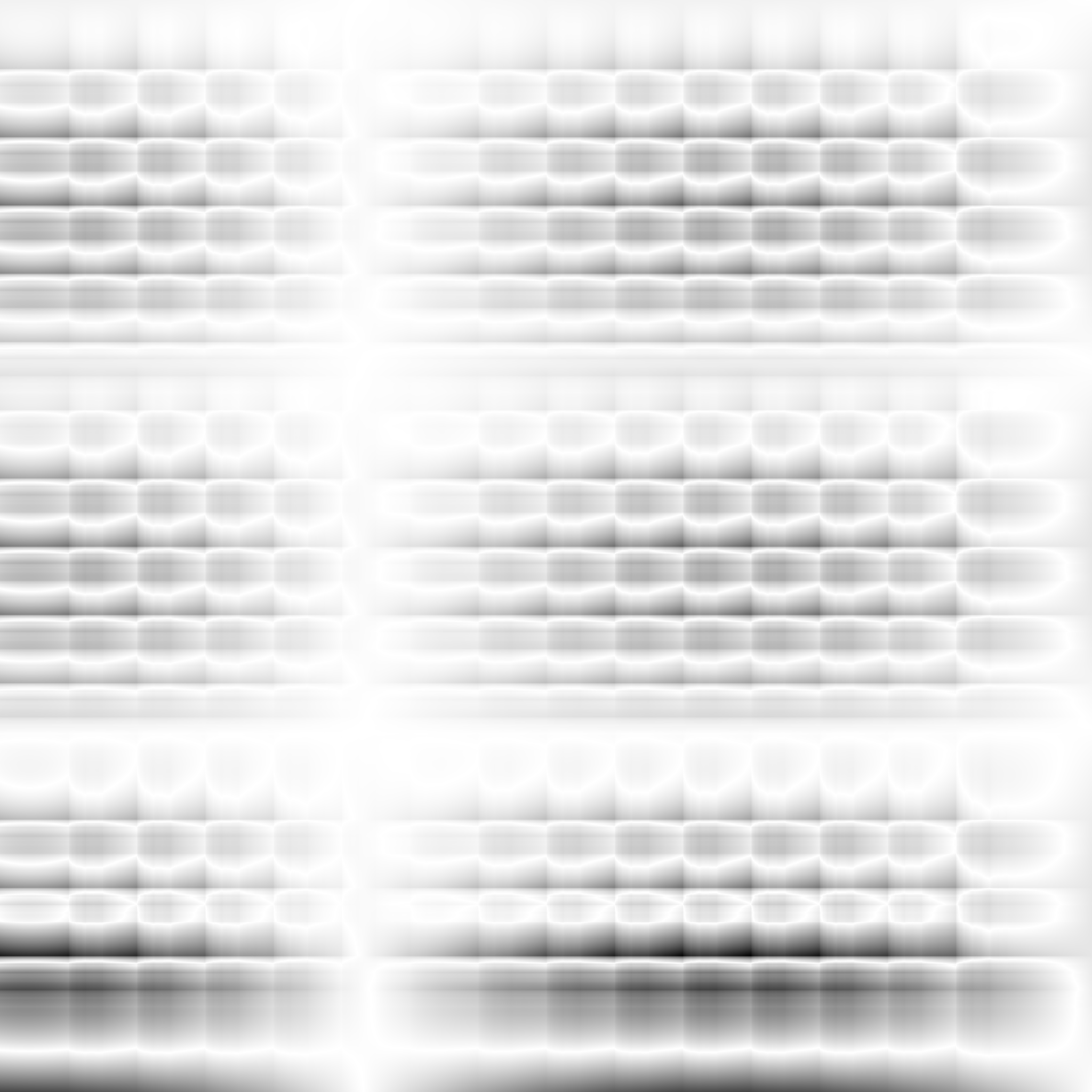} 
    &\includegraphics[width=\linewidth]{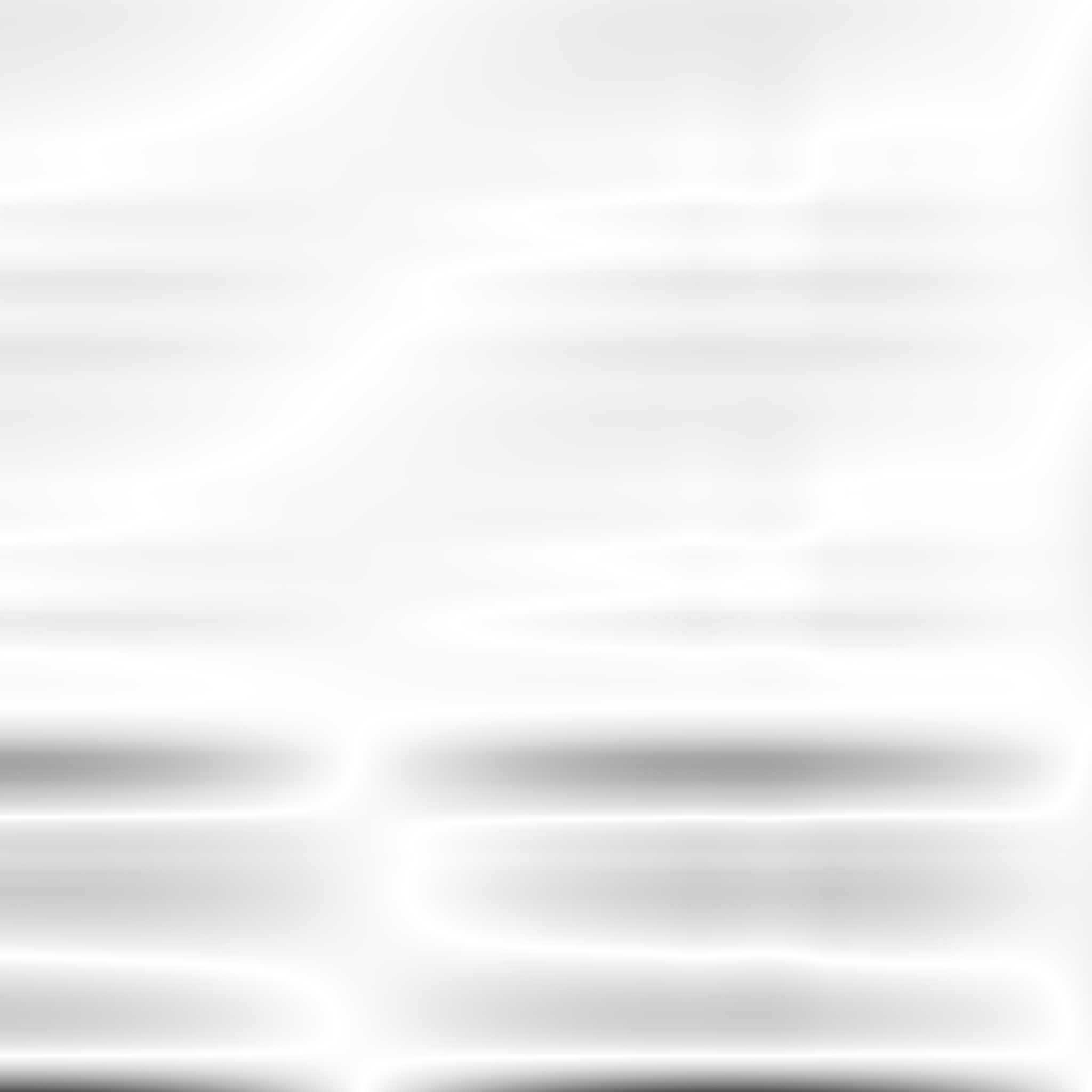} 
    &\includegraphics[width=\linewidth]{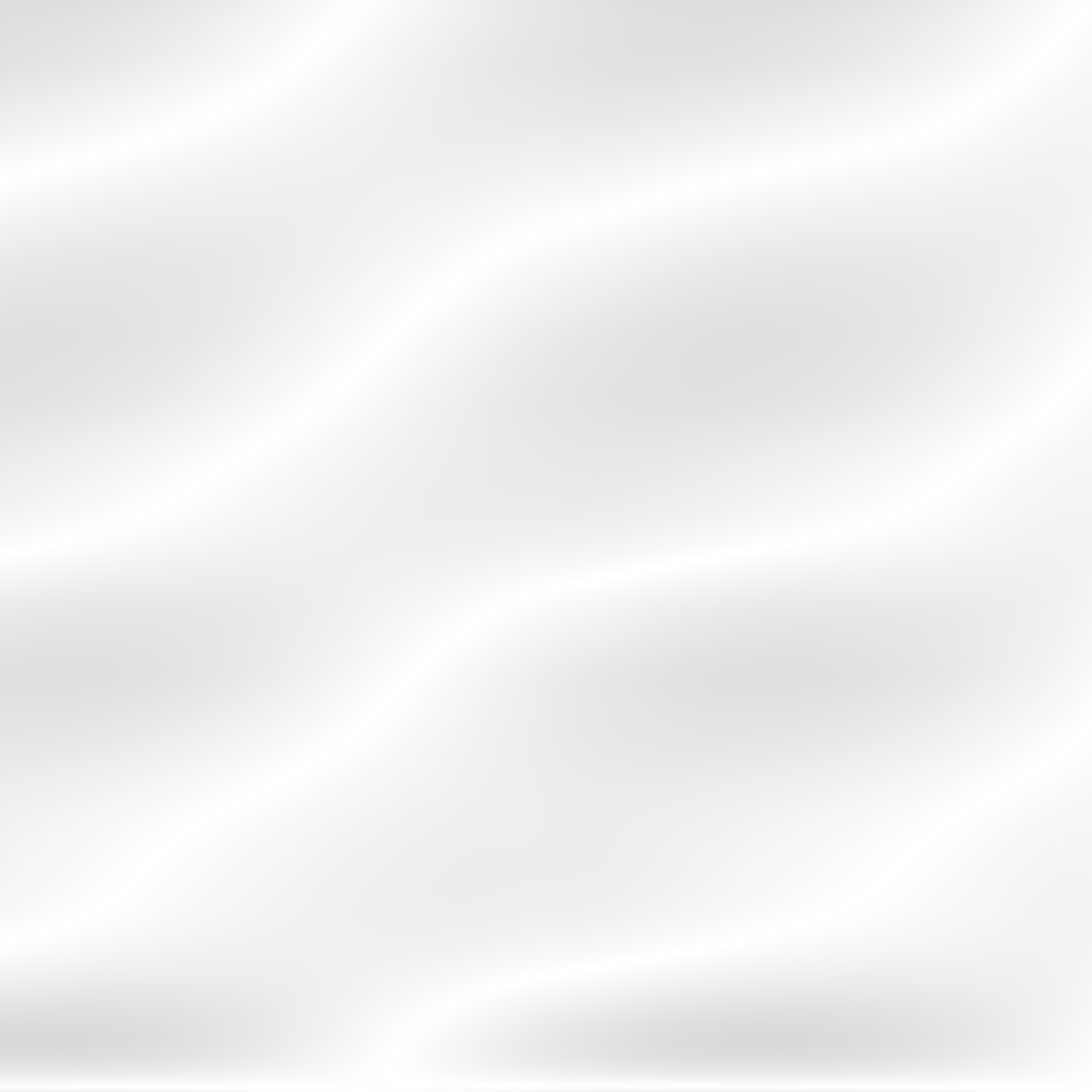}\\
    \end{tabular}
    \end{\textsizefig}
    \\
    \begin{tabular}{@{}m{0.69\textwidth}m{4pt}>{\centering\arraybackslash}m{0.23\textwidth}@{}}
    
   & & \begin{\textsizefig} Original function \end{\textsizefig}   \\
    \caption{\label{fig:GS_smoothness}
(\textbf{Reconstruction error decreases with increasing $\boldsymbol{\nu}$}).
We consider the function $f(t_1,t_2) = \cos\left(\tfrac{3}{2}\pi t_1\right) \sin \left( 3\pi t_2 \right)$ and approximate $f$ from its $32\times 32$ first Walsh samples using a truncated Walsh series (left column), and generalised sampling with different wavelets (column 2-4, row 1-2). In the top row we show the reconstructed functions and in the second row the show the absolute difference $|f-\tilde{f}|$ between $f$ and the computed approximations $\tilde{f}$. The relative error is computed as $\|f-\tilde{f}\|_{\ell^2}/\|f\|_{\ell^2}$, by evaluating $f$ and $\tilde{f}$ in a large number of points. To the right we show the function $f$. Notice how the reconstruction error decreases with increasing values of $\nu$.    
}
  & &
     \includegraphics[width=\linewidth]{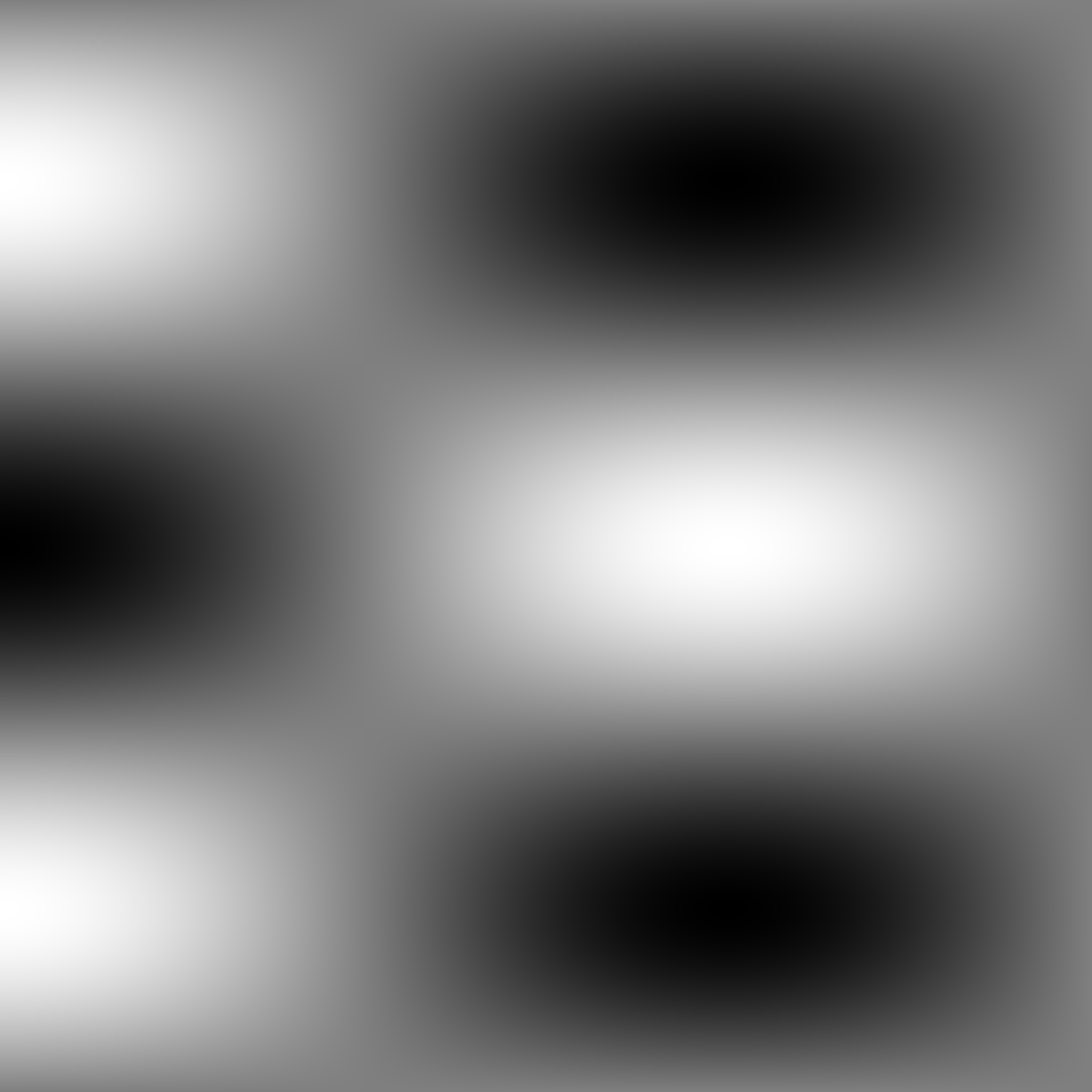} 
    \end{tabular}
    \end{tabular}
%\vspace{-10mm}
\end{figure}

\paragraph{Example 3} Finally, we consider an experiment using compressive sensing in two dimensions. As in Example 2, we consider a smooth function, but this time we also introduce a few discontinuities by adding different boxes in the image. The smooth part of the image is then well approximated by wavelets at coarse scales, whereas the discontinuity around the boxes will produce a few non-zero spikes among the wavelet coefficients at finer scales. The considered function is shown in Figure \ref{eq:CS_rec}, along with a truncated Walsh series approximation and a compressive sensing reconstruction, both from $128\times 128$ Walsh samples. The compressive sensing approximation is based on solving \eqref{eq:QCBP} with $\eta = 0.001$, using a DB4 wavelet reconstruction basis. As we can see from the figure, we can use compressive sensing to obtain a high-resolution image from relatively few measurements. In contrast, the naive Walsh approximation results in a low-resolution image where one can clearly see the pixels when zooming in. 
\begin{figure}[tb]
    \begin{center}
 \begin{\textsizefig}
   \begin{tabular}{@{}>{\centering}m{0.30\textwidth}>{\centering}m{0.30\textwidth}>{\centering\arraybackslash}m{0.30\textwidth}@{}}
      $f(t_1,t_2)$ 
    & CS sampling pattern 
    & TW sampling pattern \\
     \includegraphics[width=\linewidth]{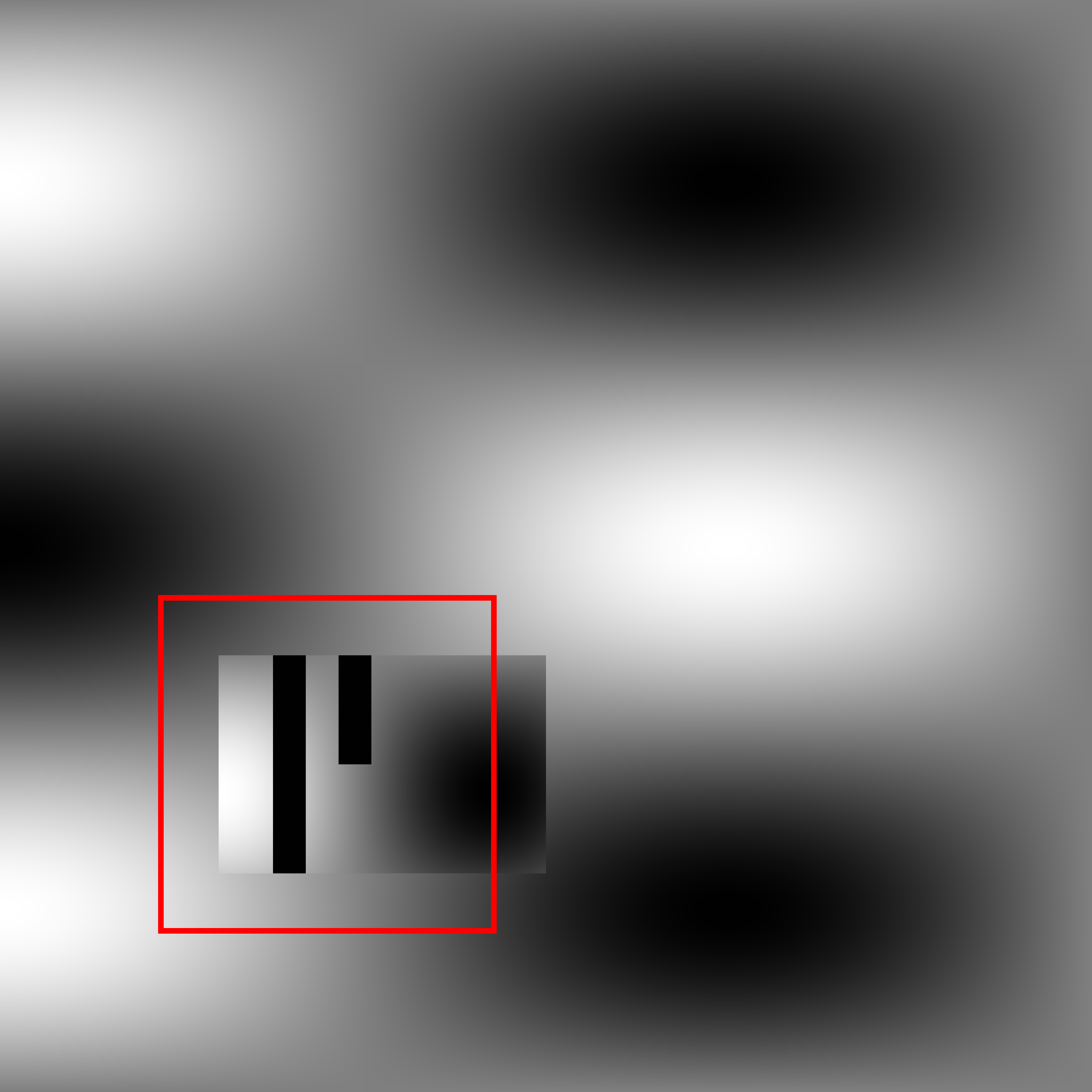}
    &\includegraphics[width=\linewidth]{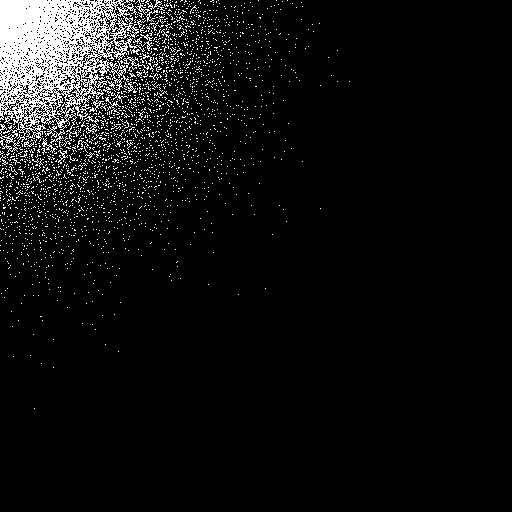} 
    &\includegraphics[width=\linewidth]{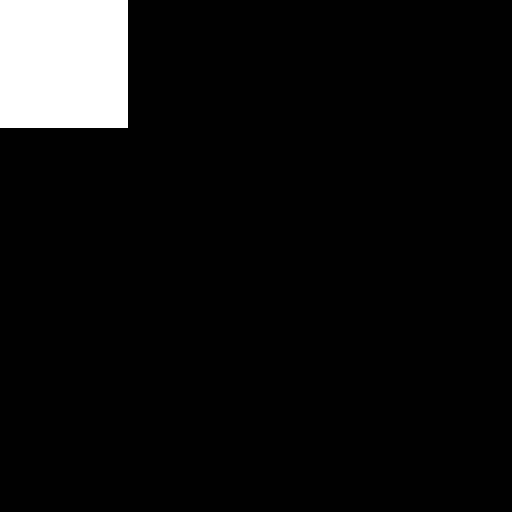} \\
      $f(t_1,t_2)$ 
    & CS reconstruction 
    & TW reconstruction \\
      (cropped) 
    & (cropped) 
    & (cropped) \\
     \includegraphics[width=\linewidth]{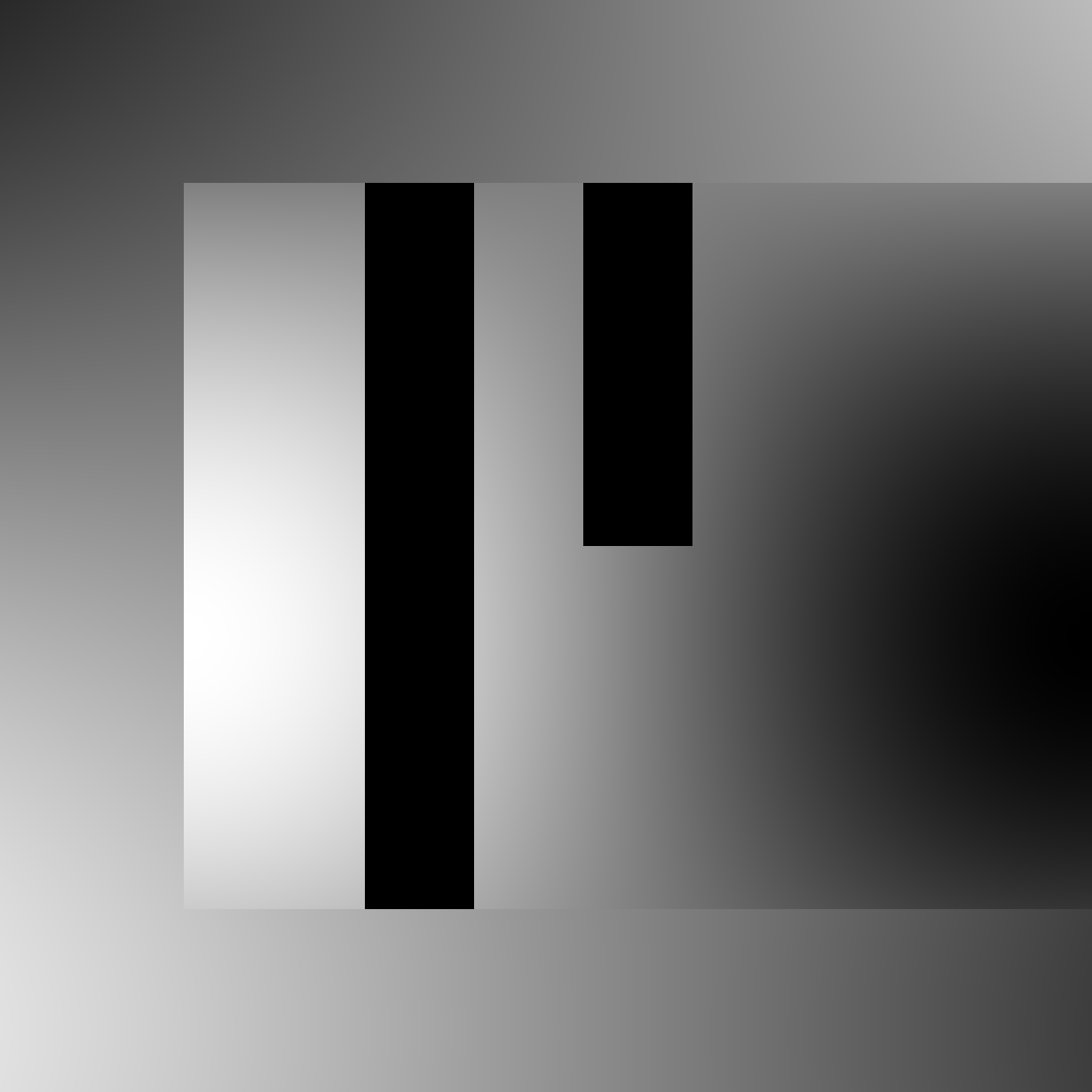}
    &\includegraphics[width=\linewidth]{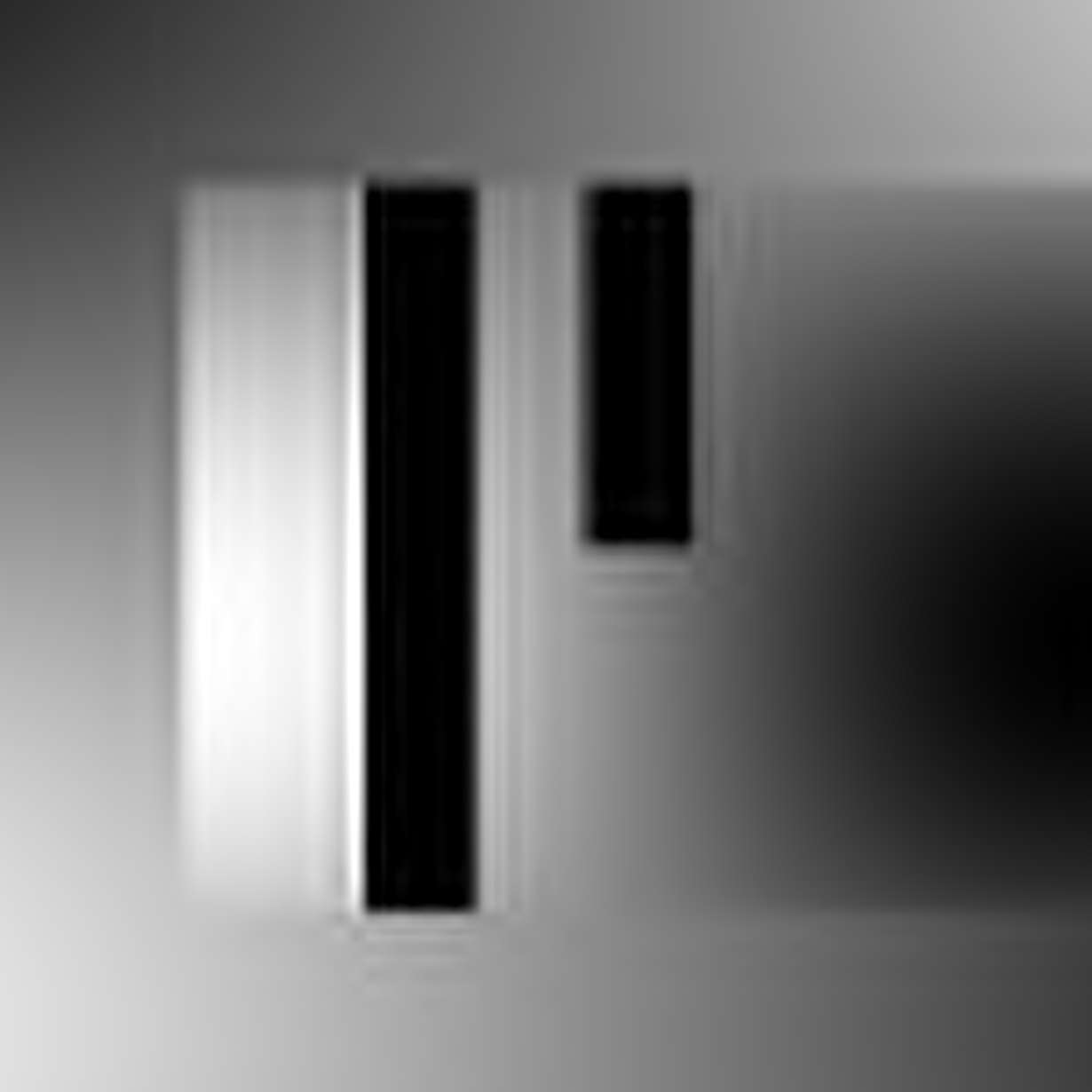} 
    &\includegraphics[width=\linewidth]{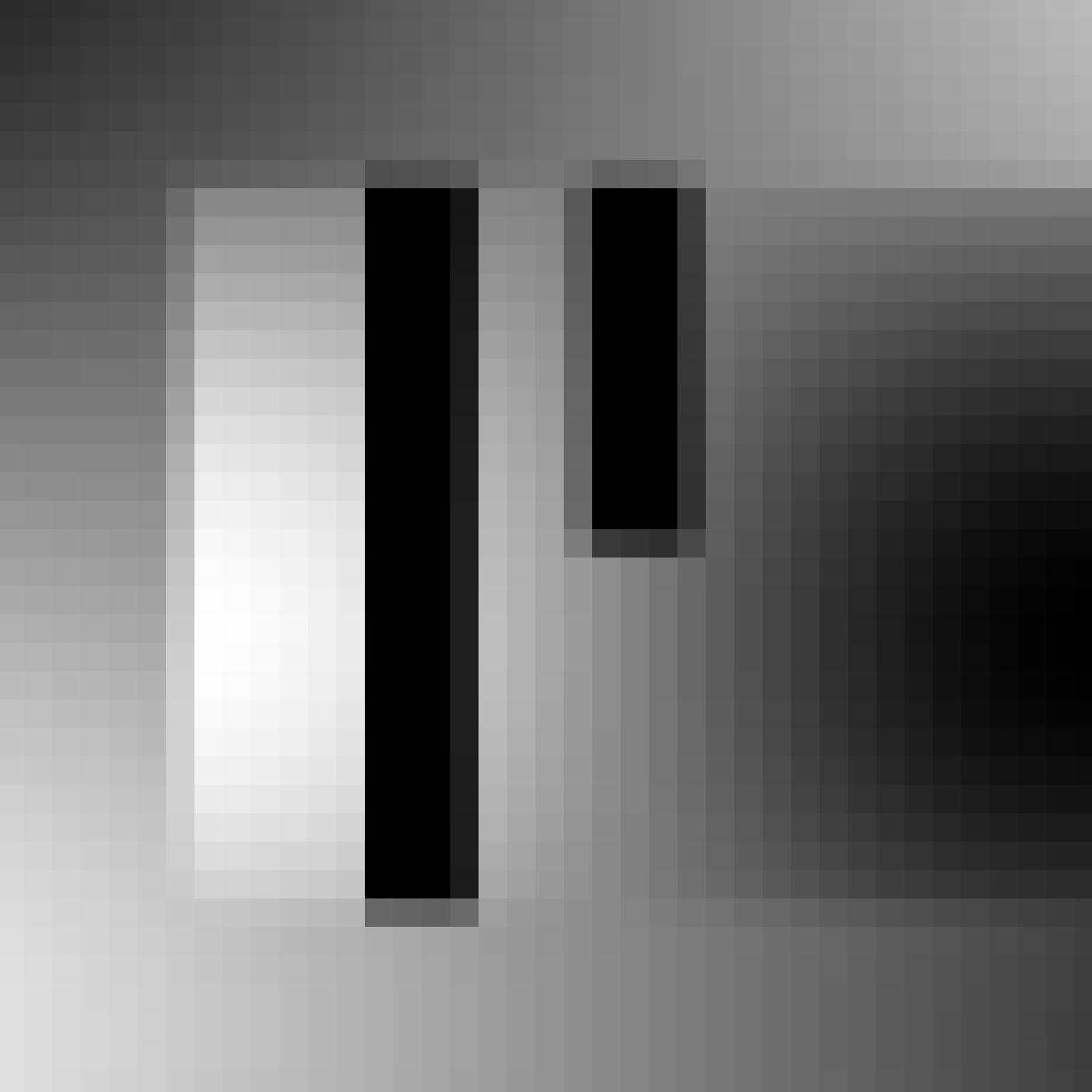}\\
    \end{tabular}
 \end{\textsizefig}
    \end{center}
    \vspace{-3mm}
    \caption{\label{eq:CS_rec}(\textbf{Compressive sensing allows for resolution enhancing}).
We consider the function $f\colon [0,1]^2 \to \mathbb{R}$ seen in the upper left corner, and acquire $128\times 128$ Walsh samples from $f$ using the two strategies seen in the upper right corner. The red squares indicate the cropped area. In the bottom row, we show from left to right the function $f$, the approximation using compressive sensing, and a truncated Walsh series approximation. From these crops, we can see that the compressive sensing reconstruction provides higher fidelity than the truncated Walsh series.}

%\vspace{-4mm}
\end{figure}

\section*{Acknowledgments}
The author would like to thank Anders C.\ Hansen for his comments. 
{\footnotesize
\bibliographystyle{abbrv}
\bibliography{references}
}
\end{document}